\newtheorem{theorem}{\textbf{Theorem}}[section]
\newtheorem{lemma}{\textbf{Lemma}}[section]
\newtheorem{proposition}{\textbf{Proposition}}[section]
\newtheorem{remark}{\textbf{Remark}}[section]
\newtheorem{definition}{\textbf{Definition}}[section]
\newtheorem{corollary}{\textbf{Corollary}}[section]
\def\be{\begin{equation}}
\def\ee{\end{equation}}
\def\bes{\begin{equation*}}
\def\ees{\end{equation*}}
\def\bea{\begin{eqnarray}}
\def\eea{\end{eqnarray}}
\def\bt{\begin{theorem}}
\def\et{\end{theorem}}
\def\bl{\begin{lemma}}
\def\el{\end{lemma}}
\def\br{\begin{remark}}
\def\er{\end{remark}}
\def\bc{\begin{corollary}}
\def\ec{\end{corollary}}
\def\bd{\begin{definition}}
\def\ed{\end{definition}}
\def\bp{\begin{proposition}}
  \def\ep{\end{proposition}}
\def\d{\mathbf{d}}
\def\vp{\varphi}
\def\non{\nonumber }
\newtheorem{teorema}{Theorem}[section]
\newtheorem{defin}[teorema]{Definition}
\DeclareMathOperator{\dist}{dist}
\def\Bbb{\mathbb}
\newcommand{\vf}{\varphi}
\newcommand{\bv}{\bar{\mathbf{v}}}
\newcommand{\bvf}{\bar{\varphi}}
\newcommand{\epsi}{\varepsilon}
\newcommand{\bov}{\mathbf{v}}
\newcommand{\T}{\mathbb{T}}
\definecolor{coloras}{rgb}{0.,0.67,0}
\begin{document}
\title{Finite dimensional reduction and convergence to equilibrium for incompressible Smectic-A liquid crystal flows}

\author{ Antonio {\sc Segatti} \thanks{
Dipartimento di Matematica 'F.Casorati', Universit\`{a} di Pavia,
Via Ferrata 1, Pavia 27100, Italy,
\emph{antonio.segatti@unipv.it}.}\ \  and Hao {\sc
Wu}\thanks{Shanghai Key Laboratory for Contemporary Applied
Mathematics and School of Mathematical Sciences,\ Fudan University,
Han Dan Road No. 220, Shanghai 200433,\ P.R. China,
\emph{haowufd@yahoo.com}. Corresponding author.}}

\date{}

\maketitle


\begin{abstract}
\noindent We consider a hydrodynamic system that models the
Smectic-A liquid crystal flow. The model consists of the
Navier-Stokes equation for the fluid velocity coupled with a
fourth-order equation for the layer variable $\vp$, endowed with
periodic boundary conditions. We analyze the long-time behavior of
the solutions within the theory of infinite-dimensional dissipative
dynamical systems. We first prove that in $2D$, the problem
possesses a global attractor $\mathcal{A}$ in certain phase space.
Then we establish the existence of an exponential attractor
$\mathcal{M}$ which entails that the global attractor $\mathcal{A}$
has finite fractal dimension. Moreover, we show that each trajectory
converges to a single equilibrium by means of a suitable
Lojasiewicz--Simon inequality. Corresponding results in $3D$ are
also discussed.

\medskip

\noindent \textbf{Keywords}: Smectic-A liquid crystal flow,
Navier--Stokes
equations, global attractor, exponential attractor, convergence to equilibrium. \\
\textbf{AMS Subject Classification}: 35B41, 35Q35, 76A15, 76D05.
\end{abstract}

\section{Introduction}
\setcounter{equation}{0}

Smectic liquid crystal is in a liquid crystalline phase, which
possesses not only some degree of orientational order like the
nematic liquid crystal, but also some degree of positional order
(layer structure). The local orientation of the liquid crystal
molecules is usually denoted by a director field $\d$. In the
nematic state, molecules tend to align themselves along a preferred
direction with no positional order of centers of mass. In the
smectic phase, molecules organize themselves into layers that are
nearly incompressible and of near constant width \cite{de}. The
layers are characterized by the iso-surfaces of a scalar function
$\vp$. A key property that distinguishes the smectic-A liquid
crystals is that, the molecules tend to align themselves along the
direction perpendicular to the layers. The study on the continuum
theory for the smectic-A phase has a long history, see for instance,
\cite{de1,de2,mpp,KP}. A general nonlinear continuum theory for
smectic-A liquid crystals applicable to situations with large
deformations and non-trivial flows was established by E in \cite{E}.
In \cite{E}, the following hydrodynamic system was proposed
 \bea
 &&\rho_t+\bov\cdot\nabla \rho=0,\label{1a}\\
&&\rho \bov_t+\rho \bov\cdot\nabla \bov+\nabla p=\nabla
\cdot(\sigma^e+\sigma^d),\label{2a}\\
&& \nabla\cdot \bov =0,\label{3a} \\
&&\vp_t+\bov\cdot\nabla \vp=\lambda [\nabla \cdot(\xi \nabla
\vp)-K\Delta^2\vp],\label{4a}
  \eea
  where
  \bea
 && \sigma^d=\mu_1(\d^TD(\bov)\d)\d\otimes\d+\mu_4D(\bov)+\mu_5(D(\bov)\d\otimes\d+\d\otimes D(\bov)\d),\non \\
&&\sigma^e =-\xi\d\otimes\d+ K\nabla(\nabla
\cdot\d)\otimes\d-K(\nabla \cdot\d)\nabla \nabla \varphi.\non
  \eea
  In the above system, $\rho$ is the density of the material, $\bov$ is the flow
velocity and $\vp$ denotes the layer variable. In the Smectic-A
phase, molecule orientational direction lies normal to the layer
that $\d=\nabla \vp$. The scalar function $p$ represents the
pressure of the fluid, $\sigma^d$ is the viscous (dissipative)
stress tensor and $\sigma^e$ is the elastic stress tensor (Ericksen
tensor). As usual, $D(\bov)$ indicates the symmetric velocity
gradient, $D(\bov)=\frac12(\nabla \bov+\nabla^\top \bov)$. Due to
the incompressibility of the fluid, there holds $ \nabla \cdot
D(\bov) \,=\, \frac{1}{2}\Delta \bov$. $\mu_1\geq 0$, $\mu_4>0$ and
$\mu_5\geq 0$ are dissipative coefficients in the stress tensor. The
constant $K>0$ arises in the free energy (cf. \cite{E}) and
$\lambda>0$ is elastic relaxation time.

System \eqref{1a}--\eqref{4a} can be viewed as the analog for the
Smectic-A liquid crystal of the Ericksen--Leslie system \cite{eri,
les,de} for the nematic liquid crystal flow. Equation \eqref{1a}
represents the conservation of mass, equation \eqref{2a} is the
conservation of linear momentum, \eqref{3a} implies the
incompressibility of the fluid and equation \eqref{4a} is the
angular momentum equation. $\xi$ is the Lagrange multiplier
corresponding to the constraint associated with the
incompressibility of the layers such that $|\nabla \vp| = 1$. In
order to relax this constraint, an often used approach is to
introduce the Ginzburg-Landau penalization function
$f(\d)=\frac{1}{\epsilon^2}(|\d|^2-1)\d$ ($ 0<\epsilon\leq 1$) with
the associated potential function
$F(\d)=\frac{1}{4\epsilon^2}(|\d|^2-1)^2$ such  that
$f(\d)=\frac{\delta F}{\delta \d}$ (cf. \cite{Liu00, CG10}).
Replacing the original Lagrange multiplier term $\xi\d$ in
$\sigma^e$ as well as in \eqref{4a} by $f(\d)$, we arrive at the
evolution system that will be considered in the present paper:
 \bea
&& \bov_t+ \bov\cdot\nabla \bov-\frac{\mu_4}{2} \Delta \bov+\nabla
p=\nabla
\cdot(\tilde{\sigma}^d+\tilde{\sigma}^e),\label{1b}\\
&& \nabla\cdot v =0,\label{2b} \\
&&\vp_t+\bov\cdot\nabla \vp=\lambda (-K\Delta^2\vp+\nabla\cdot
f(\d)),\label{3b}
  \eea
where
 \bea
 \tilde{\sigma}^d&=&
 \mu_1(\d^TD(\bov)\d)\d\otimes\d+\mu_5(D(\bov)\d\otimes\d+\d\otimes
 D(\bov)\d),\non\\
 \tilde{\sigma}^e&=&-f(\d)\otimes\d+K\nabla(\nabla \cdot\d)\otimes\d-K(\nabla
\cdot\d)\nabla \d.\non
 \eea
 The first well-posedness result of the hydrodynamic system for
  Smectic-A liquid crystal flow mentioned above was obtained in \cite{Liu00}.
 The author considered
 an approximate system like \eqref{1b}--\eqref{3b} but with variable density
 (thus one also has a mass transport equation for $\rho$ like \eqref{1a}) in an open
 bounded domain $\Omega\subset \mathbb{R}^n$, $n=2,3$. The system is
  subject to no-slip boundary condition for $\bov$
  and time-independent Dirichlet--Neumann boundary conditions for $\vp$.
  The author derived the energy dissipative relation of the system
  and proved the existence of global weak solutions in both $2D$
  and $3D$ by using a semi-Galerkin procedure.
  Moreover, he described the global regularity of weak solutions (for
large enough $\mu_4$ if $n = 3$) and provided a preliminary analysis
on the stability of the system. Quite recently, system
\eqref{1b}--\eqref{3b} with constant density and subject to no-slip
boundary condition for $\bov$ but time-dependent Dirichlet--Neumann
boundary data for $\vp$ was studied in \cite{CG10}. The authors
proved the existence of weak solutions that are bounded up to
infinity time for the initial-boundary problem with arbitrary
initial data. The existence of time-periodic weak solutions is also
obtained. Assuming the viscosity $\mu_4$ is sufficiently large, the
author studied the global in time regularity of the solution and
proved the existence and uniqueness of regular solutions for both
the initial-valued problem and the time-periodic problem.

In our present paper, we consider the problem in the $n$-dimensional
torus ($n=2,3$) $\T^n:=\mathbb{R}^n/\mathbb{Z}^n$, namely, system
\eqref{1b}--\eqref{3b} is subjected to periodic boundary conditions.
One of the possible reason for this choice is as follows. Contrary
to the system for nematic liquid crystal flow (cf. e.g.,
\cite{LL95}), now the equation \eqref{3b} for $\vp$ is of fourth
order type and thus lacks of the maximum principle. In particular,
we lose the control of $\|\d\|_{\mathbf{L}^\infty}$. We note that,
the bound of $\|\d\|_{\mathbf{L}^\infty}$ plays an important role in
the subsequent analysis in order to prove the regularity of
solutions to system \eqref{1b}--\eqref{3b} (cf. Lemma \ref{A22d} and
Lemma \ref{hA3}). Higher-order estimates of solutions can be
obtained from some higher-order differential inequalities in the
sprit of \cite{LL95}. However, without the estimate of
$\|\d\|_{\mathbf{L}^\infty}$, we are not able to control certain
higher-order nonlinear terms to derive the required higher-order
differential inequalities. It seems that this is also necessary in
order to complete the calculations in \cite{Liu00}. This difficulty
can be bypassed if one additionally assume that the viscosity
$\mu_4$ is sufficiently large (cf. Lemma \ref{l3d}, see also
\cite{CG10}). In the periodic boundary case, the key observation is
that we can first obtain a uniform estimate on
$\|\d\|_{\mathbf{H}^2}$, which by the embedding
$\mathbf{H}^2\hookrightarrow \mathbf{L}^\infty$ yields the bound of
$\|\d\|_{\mathbf{L}^\infty}$. The proof relies on integration by
parts, thus if we take the boundary conditions as in \cite{Liu00,
CG10}, we are not able to get rid of certain extra boundary terms.

The main propose the present paper is to be a first step towards
 the mathematical study of the long-time behavior of global solutions
 to the periodic boundary problem of system \eqref{1b}--\eqref{3b}.
 In the $2D$ case,
 we are interested in the study of finite dimensional
 global attractors. We recall that a global attractor
 is the smallest compact attracting set of the phase
 space which is fully invariant for the dynamics and
 attracts all the bounded subsets of the phase space
 for large times. Thus, it is certainly a major step
 in the understanding of the long time dynamics of the
 given evolutional system. In particular, when the global attractor
 is proved to have finite fractal or Hausdorff dimension,
 then, although the phase space is infinite dimensional,
 the dynamics of the system becomes finite
 dimensional for large times and can be described with a finite numbers of
 parameters. This is the so called finite dimensional reduction.
 We refer to \cite{TE} for a detailed description.
We will prove the finite dimensionality of the global attractor by
showing the existence of an exponential attractor, which is a
semi-invariant, compact set attracting exponentially fast the
bounded subsets of the phase space. Moreover, it has finite fractal
dimension and contains the global attractor. We refer to \cite{EFNT}
and to \cite{MZsur} for a detailed introduction of this concept and
for discussion on its importance. This approach has the advantage
that, contrary to the volume contraction method (see \cite{TE}), it
does not need any differentiability property of the semigroup.
 As a second step, we will study the long-time behavior of single trajectories, i.e., the convergence
 to single equilibrium. This is a nontrivial problem because the structure of the set of equilibria can be
quite complicated and, moreover,  may form a continuum. In
particular, under our current periodic boundary conditions, one may
expect that the dimension of the set of equilibria is at least $n$.
This is because a shift in each
 variable should give another steady state. Moreover, we note that for our
 system, every constant vector $\d_0$ with unit-length ($|\d_0|=1$) serves as an absolute
 minimizer of the functional $E$ in \eqref{Evp}. We shall
apply the \L ojasiewicz--Simon approach (cf. L. Simon \cite{S83}) to
prove the convergence and obtain estimates on the convergence rate
(see \cite{Chi,J981,HJ01,WGZ1,RH98,W10,GG10} and the references
therein for applications to various evolution equations). In $3D$
case, some partial results can be obtained. Since the
$\mathbf{L}^\infty$-estimate of $\d$ is still available, we can show
the local existence of strong solutions for arbitrary initial data
by higher-order energy estimates. Assuming the viscosity $\mu_4$ is
sufficiently large, we also obtain the global existence of strong
solution. Finally, we show that the global weak/strong solutions
will converge to single equilibrium as in the $2D$ case. In
particular, we prove the well-posedness and long-time behavior of
global strong solutions when the initial data is close to a local
minimizer of the energy $E$ using the \L ojasiewicz--Simon
inequality, which improves the results in the literature that only
the case near an absolute minimizer is considered (cf. \cite{Liu00},
see also \cite{LL95,W10} for the nematic liquid crystal flow).

The remaining part of the paper is organized as follows. Section 2
is devoted to some preliminaries and the main results of the paper.
In Section 3, we prove that in the $2D$ case, the semigroup
generated by our model on a suitable phase space possesses the
global attractor $\mathcal{A}$ and an exponential attractor
$\mathcal{M}$. This allows us to infer that $\mathcal{A}$ has finite
fractal dimension. In Section 4, in the $2D$ case, we demonstrate
that each trajectory converges to a single equilibrium and also find
a convergence rate estimate. Finally, in Section 5, we discuss the
results in $3D$ case.


\section{Preliminaries and Main Results}
\setcounter{equation}{0}

We denote the Lebesgue spaces with $L^p(\mathbb{T}^n)$ (or simply $L^p$), $p\in [1,
\infty]$, and their norms with $\|\cdot\|_{L^p}$. When $p = 2$, we simply
denote the $L^2$-norm by $\|\cdot\|$ and its inner product by
$(\cdot, \cdot)$. With $H^s$, $s\,\in\,\mathbb{R}$ we indicate the
Sobolev spaces $H^s(\mathbb{T}^n)$ endowed with norm
$\|\cdot\|_{H^s}$. To simplify the notations, we will denote the
vector spaces $(L^p)^n$, $(H^s)^n$, $(L^p)^{n\times n}$,
$(H^s)^{n\times n}$... by $\mathbf{L}^p$ and $\mathbf{H}^s$,
respectively, and their norms are denoted in the same way as above.
For any norm space $X$, we denote its subspace by $\dot X$
 such that $\dot X=\{w\in X: \int_{\T^n} w dx=0\}$.
As customary, we introduce the following standard functional spaces
for the Navier--Stokes equation
 \be  H:=\{\bov\in \mathbf{L}^2(\mathbb{T}^n),\ \nabla\cdot
 \bov=0\},\quad
 V:=\{\bov\in \mathbf{H}^1(\mathbb{T}^n),\ \nabla\cdot
 \bov=0\},\quad V':=\text{the\ dual of\ } V.\non
 \ee
 $\langle\cdot,\cdot\rangle$ denotes the duality product between $V'$ and
 $V$. The shorthand
notation $D_{ij}$ will be used for the entries of the matrix $D$. We
indicate with the same symbol $C$ different constants. Special
dependence will be indicated if it is necessary. Analogously,
$\mathcal{D}\,:\,\mathbb{R}^+\to \mathbb{R}^+$ denotes a generic
monotone function. Throughout the paper, the Einstein summation
convention will be used.
\\

We introduce the notions of  weak/strong solutions to problem
\eqref{1b}--\eqref{3b}:
 \bd
 (1) $(\bov,\vf)$ is a weak
solution to problem \eqref{1b}--\eqref{3b} in
$[0,T)\,\times\,\mathbb{T}^n$ {\rm ($T\in (0,+\infty)$)}, if
$\bov\,\in\,L^\infty(0,T;H)\cap L^2(0,T;V)$, $\vf\in
L^\infty(0,T;H^2)\cap L^2(0,T;H^4)$ and verifying
 \bea
&& \langle \partial_t \bov, \mathbf{w}\rangle + ((\bov\cdot\nabla)
\bov,\mathbf{w}) + \frac{\mu_4}{2}(\nabla \bov,\nabla \mathbf{w})
 = (\tilde{\sigma}^d+\tilde{\sigma}^e,\nabla \mathbf{w}),\,\,\,\,\forall \mathbf{w}\,\in V,\non\\
&&\vp_t+\bov\cdot\nabla \vp=\lambda (-K\Delta^2\vp+\nabla\cdot
f(\nabla \vp)),
 \,\,\,\,\text{ a.e. in } [0,T]\times \T^n \non\\
 &&  \bov(0)\,=\,\bov_0,\,\,\,\,\,\,\,\vf(0)\,=\,\vf_0, \quad  \text{in} \ \mathbb{T}^n.\non
 \eea

 (2) A weak solution $(\bov,\vf)$ to problem
\eqref{1b}--\eqref{3b} is a strong solution, if for $T\,>\,0$,
$\bov\,\in\,L^\infty(0,T;V)\cap L^2(0,T;\mathbf{H}^2)$, $\vf\in
L^\infty(0,T;H^4)\cap L^2(0,T;H^6)$, and the system
\eqref{1b}--\eqref{3b} is satisfied point-wisely in $[0,T)\times
\T^n$.
 \ed

The calculation in \cite{Liu00} (with different boundary conditions
but the proof is the same) implies that system
\eqref{1b}--\eqref{3b} has a dissipative nature, in particular,
 the following \emph{basic energy law} holds
\begin{proposition} \label{energy1}
Let $(\bov,\vp)$ be a smooth solution to the system
\eqref{1b}--\eqref{3b}. Define the total energy
 \be
\mathcal{E}(t)=\frac{1}{2}\|\bov(t)\|^2+\frac{K}{2}\|\Delta
\varphi(t)\|^2+\int_{\T^n}F(\d)(t)\,dx.
 \label{E}
 \ee
 Then following identity holds:
 \bea  \frac{d}{dt}\mathcal{E}(t)&=&
-\int_{\T^n}\left(\mu_1(\d^\top D(\bov)\d)^2+\frac{\mu_4}{2}|\nabla
\bov|^2+2\mu_5|D(\bov)\d|^2 \right)dx \non\\
&& -\lambda \left\|-K\Delta^2
 \varphi+\nabla\cdot f(\d)\right\|^2. \label{energy}
 \eea
\end{proposition}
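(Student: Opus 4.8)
The plan is to test the momentum equation \eqref{1b} against $\bov$ and the layer equation \eqref{3b} against $g:=-K\Delta^{2}\vp+\nabla\cdot f(\d)$, to produce two differential identities, add them, and then check that every cross term coupling $\bov$ and $\vp$ cancels. Smoothness of $(\bov,\vp)$ and the absence of a boundary on $\T^{n}$ make all the integrations by parts below legitimate and free of boundary contributions; I will use $\d=\nabla\vp$, $\nabla\cdot\d=\Delta\vp$, $\Delta\d=\nabla(\Delta\vp)$, and the symmetry of the Hessian $\nabla^{2}\vp$ repeatedly.

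First, multiplying \eqref{1b} by $\bov$ and integrating over $\T^{n}$: the inertial term $\int_{\T^{n}}(\bov\cdot\nabla)\bov\cdot\bov\,dx$ and the pressure term $\int_{\T^{n}}\nabla p\cdot\bov\,dx$ vanish by $\div\,\bov=0$, the viscosity term yields $\frac{\mu_{4}}{2}\|\nabla\bov\|^{2}$, and one integration by parts rewrites the stress divergence as $-\int_{\T^{n}}(\tilde{\sigma}^{d}+\tilde{\sigma}^{e}):\nabla\bov\,dx$. For the dissipative part I would use the symmetry of $\tilde{\sigma}^{d}$ (it may therefore be contracted against $D(\bov)$) together with the elementary identities $(\mathbf{a}\otimes\mathbf{a}):D=\mathbf{a}^{\top}D\mathbf{a}$ and $(D\mathbf{a}\otimes\mathbf{a}):D=(\mathbf{a}\otimes D\mathbf{a}):D=|D\mathbf{a}|^{2}$, valid for symmetric $D$, which turn $\int_{\T^{n}}\tilde{\sigma}^{d}:\nabla\bov\,dx$ into $\int_{\T^{n}}\big(\mu_{1}(\d^{\top}D(\bov)\d)^{2}+2\mu_{5}|D(\bov)\d|^{2}\big)dx$. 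This gives
\[
\frac12\frac{d}{dt}\|\bov\|^{2}+\frac{\mu_{4}}{2}\|\nabla\bov\|^{2}
=-\int_{\T^{n}}\big(\mu_{1}(\d^{\top}D(\bov)\d)^{2}+2\mu_{5}|D(\bov)\d|^{2}\big)dx-\int_{\T^{n}}\tilde{\sigma}^{e}:\nabla\bov\,dx .
\]

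Next, testing \eqref{3b} against $g$: integrating by parts (twice in the biharmonic term, once in $\nabla\cdot f(\d)$, and using $\partial_{t}\d=\nabla\vp_{t}$) identifies $(\vp_{t},g)$ with $-\frac{d}{dt}\big(\frac{K}{2}\|\Delta\vp\|^{2}+\int_{\T^{n}}F(\d)\,dx\big)$, while \eqref{3b} gives directly $(\vp_{t},g)+(\bov\cdot\nabla\vp,g)=\lambda\|g\|^{2}$. Hence
\[
\frac{d}{dt}\Big(\frac{K}{2}\|\Delta\vp\|^{2}+\int_{\T^{n}}F(\d)\,dx\Big)=-\lambda\|g\|^{2}+\int_{\T^{n}}g\,(\bov\cdot\nabla\vp)\,dx .
\]
Adding the two displays and recalling \eqref{E}, the proposition reduces to the single identity
\[
\int_{\T^{n}}\tilde{\sigma}^{e}:\nabla\bov\,dx=\int_{\T^{n}}g\,(\bov\cdot\nabla\vp)\,dx ,
\]
and I expect this to be the main obstacle, since it is the only place where the specific algebraic structure of the Ericksen tensor $\tilde{\sigma}^{e}$ is used and the bookkeeping is delicate.

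To establish it I would match $\tilde{\sigma}^{e}=-f(\d)\otimes\d+K\,\nabla\Delta\vp\otimes\nabla\vp-K\,\Delta\vp\,\nabla^{2}\vp$ block by block against $g=\nabla\cdot f(\d)-K\Delta^{2}\vp$. For the Ginzburg--Landau block, one integration by parts gives $\int_{\T^{n}}(\nabla\cdot f(\d))(\bov\cdot\nabla\vp)\,dx=-\int_{\T^{n}}(f(\d)\otimes\d):\nabla\bov\,dx-\int_{\T^{n}}f(\d)\cdot[(\bov\cdot\nabla)\nabla\vp]\,dx$, and the last integral vanishes because $f(\d)\parallel\d$ makes the integrand equal to $\frac{1}{4\epsi^{2}}\bov\cdot\nabla\big((|\d|^{2}-1)^{2}\big)$, whose integral is zero by $\div\,\bov=0$; the surviving term is exactly the contribution of $-f(\d)\otimes\d$. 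For the biharmonic block, two integrations by parts give $-K\int_{\T^{n}}\Delta^{2}\vp\,(\bov\cdot\nabla\vp)\,dx=K\int_{\T^{n}}(\nabla\Delta\vp\otimes\nabla\vp):\nabla\bov\,dx+K\int_{\T^{n}}\nabla\Delta\vp\cdot[(\bov\cdot\nabla)\nabla\vp]\,dx$; the first term is the contribution of $K\,\nabla\Delta\vp\otimes\nabla\vp$, and one further integration by parts in the second term, using $\Delta(\nabla\vp)=\nabla(\Delta\vp)$ and $\div\,\bov=0$ (which kills $\frac12\int_{\T^{n}}\bov\cdot\nabla(\Delta\vp)^{2}\,dx$), rewrites it as $-K\int_{\T^{n}}\Delta\vp\,(\nabla^{2}\vp):\nabla\bov\,dx$, the contribution of $-K\,\Delta\vp\,\nabla^{2}\vp$. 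Summing the blocks proves the identity, and \eqref{energy} follows. The subtlety of this last step is that the three summands of $\tilde{\sigma}^{e}$ are not all symmetric, so one must track which index of $\nabla\bov$ is contracted, use the symmetry of $\nabla^{2}\vp$ and the commutation $\Delta\nabla=\nabla\Delta$ at the right moments, and invoke incompressibility three times to discard leftover total-divergence terms --- routine in principle, but easy to get wrong.
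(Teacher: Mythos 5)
Your proposal is correct, and it is essentially the standard computation: the paper itself gives no proof of Proposition \ref{energy1}, deferring to the calculation in \cite{Liu00}, which proceeds exactly as you do --- testing \eqref{1b} with $\bov$, testing \eqref{3b} with the variational derivative $-K\Delta^2\vp+\nabla\cdot f(\d)$, and using the cancellation property of the Ericksen stress (the identity for $\nabla\cdot\tilde\sigma^e$ that the paper later quotes from \cite{CG10} before \eqref{1b1} is precisely your key identity $\int_{\T^n}\tilde\sigma^e\!:\!\nabla\bov\,dx=\int_{\T^n}g\,(\bov\cdot\nabla\vp)\,dx$ in divergence form). Your handling of the dissipative stress, of the Ginzburg--Landau block via $f(\d)\parallel\d$, and of the biharmonic block via incompressibility all check out.
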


We can prove the existence of weak solutions to
\eqref{1b}--\eqref{3b} by applying the semi-Galerkin approximation
scheme as in \cite{Liu00} (cf. also \cite{Lions,LL95,CG10}). The
proof is similar to \cite{Liu00, CG10} and we omit the details here.

 \bt \label{existence} {\rm [Existence of weak solution]} Suppose $n=2,3$. For any $(\bov_0,\vf_0)\,\in\,H\times H^2$,
system \eqref{1b}--\eqref{3b} admits at least one weak solution.
 \et

A weak/strong uniqueness result was obtained in \cite{CG10} for
system \eqref{1b}--\eqref{3b} with different boundary conditions
(see \cite{Liu00} for a statement for the system with variable
density). A similar argument yields the same conclusion for our
case:
 \bt \label{uniqueness} {\rm [Weak/strong uniqueness]} If $(\bov_1,\vf_1)$ and $(\bov_2,\vf_2)$ are
respectively a weak and a strong solution of \eqref{1b}--\eqref{3b}
in $[0,T]$, then $(\bov_1,\vf_1)\,\equiv\,(\bov_2,\vf_2)$ almost
everywhere in $[0,T]\times \mathbb{T}^n$.
 \et

Here are the main results of the paper:

\bt \label{reg}
 Suppose $n\,=\,2$.

 (1) Any weak solution to system \eqref{1b}--\eqref{3b} becomes strong for
strictly positive times such that for any $t>0$,
\begin{equation}
\label{strong-estimate} \|(\bov,\vf)(t)\|_{V\times H^4} +
\int_{t}^{t+1}\|\Delta \bov(s)\|^2 + \|\vf(s)\|^2_{H^6}ds \,\le\,
\mathcal{D}(\|(\bov_0,\vf_0)\|_{H\times H^2}, t),
\end{equation}
$\mathcal{D}$ is a positive function depending on
 $\|\bov_0\|$, $\|\vp_0\|_{H^2}$, $t$ and coefficients of the system.
 In particular, $\lim_{t\to 0^+}\mathcal{D}(t)=+\infty$.

(2) For any $(\bov_0,\vf_0)\,\in\,V\times H^4$, system
\eqref{1b}--\eqref{3b} admits a unique strong solution.
 \et

 \bt \label{2datt} Suppose $n=2$. Denote the
phase space $\mathcal{H} \times H^2_c$, where $\mathcal{H}=\{\bov\in
H: \int_{\T^2} \bov dx= \mathbf{h}\}$ and $H^2_c=\{\vp\in H^2:
\int_{\T^2} \vp dx = c\}$, with $\mathbf{h}$ being any given
constant vector in $\mathbb{R}^2$ and $c$ is an arbitrary constant.

  (1) System \eqref{1b}--\eqref{3b} processes a
global attractor $\mathcal{A}$ with finite fractal dimension in
$\mathcal{H} \times H^2_c$. Moreover, $\mathcal{A}$ is bounded in
$V\,\times\,H^4$ and it is generated by all the complete
trajectories.

  (2) System \eqref{1b}--\eqref{3b} possesses an exponential attractor
$\mathcal{M}$ in  $\mathcal{H} \times H^2_c$, which is bounded in $V
\times H^4$.
 \et

\bt
 \label{con2d}
 Suppose $n=2$. For any $\bov_0\in \dot H$, $\vp_0\in H^2$, the
  global weak solutions to problem \eqref{1b}--\eqref{3b} has the
following property:
 \be \lim_{t\rightarrow +\infty}
 (\|\bov(t)\|_{\mathbf{H}^1}+\|\vp(t)-\vp_\infty\|_{H^4})=0,\label{cgce}
 \ee
 where $\varphi_\infty\in H^4$ is a solution to the following
 periodic elliptic problem:
  \be
  - K \Delta^2\vp_\infty + \nabla \cdot f(\nabla \vp_\infty)=0,\quad x\in
  \T^2, \text{with}\   \int_{\T^2}\varphi_\infty dx=\int_{\T^2} \varphi_0 dx,
   \label{staa}
   \ee
 Moreover, there exists a positive constant $C$ depending on
 $\bov_0,\vp_0, \vp_\infty, K, \lambda, \mu's$
 such that
 \be
 \|\bov(t)\|_{\mathbf{H}^1}+\|\vp(t)-\vp_\infty\|_{H^4}\leq C(1+t)^{-\frac{\theta}{(1-2\theta)}}, \quad \forall\ t \geq
 1.\label{rate}
 \ee
$\theta \in (0,\frac12)$ is usually called \L ojasiewicz exponent
and it depends on $\vp_\infty$.

 For any $\bov_0\in \dot V$, $\vp_0\in H^4$, the
  global strong solution to problem \eqref{1b}--\eqref{3b} has the
  same property \eqref{cgce} and \eqref{rate} holds for $t\geq 0$.
 \et

 \bt \label{3d} Suppose $n=3$.

 (1) For any $(\mathbf{v}_0, \vp_0)\in V\times H^4$,
problem \eqref{1b}--\eqref{3b} admits a unique local strong
solution.

 (2) For any $(\mathbf{v}_0, \vp_0)\in V\times H^4$, if $\mu_4\geq
 \underline{\mu}_4(\mathbf{v}_0,\vp_0)$ is sufficiently large (cf. \eqref{mu4b}), problem \eqref{1b}--\eqref{3b}
  admits a unique global strong solution.

 (3) Let $(\mathbf{v},\vp)$ be the weak solution to problem \eqref{1b}--\eqref{3b} on $[0,+\infty)$.
 Then there is some $T^*>0$ such that $\mathbf{v}\in L^\infty(T^*, \infty; V)\cap L^2_{loc}(T^*,\infty; \mathbf{H}^2)$, $\vp\in
 L^\infty(T^*,\infty; H^4)\cap L^2_{loc}(T^*,\infty; H^6)$.

 (4) Let $\vp^*\in H^2$ be a local/absolute  minimizer of
 $E(\vp)$ (cf. \eqref{Evp}). For any $\bov_0\in V$, $\vp_0\in H^4$ satisfying
  $\|\bov_0\|_{\mathbf{H}^1}\leq 1$,  $\|\vp_0-\vp^*\|_{H^4}\leq
 1$, there are constants $\sigma_1,\sigma_2\in (0,1]$ which depend
 on $\vp^*$  and coefficients of the system such that if
 $\|\bov_0\|\leq \sigma_1$ and  $\|\vp_0-\vp^*\|_{H^2}\leq \sigma_2$,
then problem
 \eqref{1b}--\eqref{3b} admits a unique global strong solution.

 (5) If we further assume that $\int_{\T^3} \bov_0 dx=0$, then the global weak/strong
 solution to \eqref{1b}--\eqref{3b} enjoys the same long-time behavior as in
 Theorem \ref{con2d}, with $t\geq 1$ in \eqref{rate} being replaced
 by $t\geq T^*$ for the weak solution.
 \et

\br\label{zero}
  Due to the periodic boundary conditions, we can
easily see that the mean value of $\bov$ and $\vf$ are conserved in
the evolution:
$$ \int_{\T^n} \bov(t) dx=\int_{\T^n} \bov_0 dx, \quad
\int_{\T^n}\vf(t) dx=\int_{\T^n}\vf_0 dx, \quad \forall\ t\geq 0.$$
For the sake of simplicity, by replacing $\bov$ (respectively $\vf$)
with $ \bov_0 - \int_{\T^n} \bov_0 dx$ (respectively with $\vf_0
-\int_{\T^n}\vf_0 dx$), we shall always assume that
$\int_{\T^n}\bov_0 dx\,\equiv\,0$ and $\int_{\T^n} \vf_0 dx
\,\equiv\, 0$ in the subsequent proof. Since system
\eqref{1b}--\eqref{3b} is invariant under a shift of $\vp$ by any
constant, the transformation on $\vp$ will not influence all our
results. However, when we shift the velocity $\bov$ to make it has a
zero mean, there will be one extra lower-order term in the equations
\eqref{1b} and \eqref{3b} respectively. This difference will not
influence most results we obtain except the convergence of global
solutions to equilibria (Theorem \ref{con2d} and point (5) in
Theorem \ref{3d}). If the mean value of $\bov$ is not zero, we
cannot apply the Poincar\'e inequality to obtain the decay of
$\|\bov\|_{\mathbf{H}^1}$ from the convergence of $\|\nabla \bov\|$.
 \er

 \br If we simply set $\bov=0$, system \eqref{1b}--\eqref{3b} is reduce to
the single equation $\vp_t=\lambda (-K\Delta^2\vp+\nabla\cdot
f(\nabla \vp))$, which has been used to model epitaxial growth of
thin films with slope selection in $2D$, where $\vp$ denotes a
scaled height function of a thin film (cf. \cite{LJG03, KY}).
Existence and uniqueness of the weak solutions as well as some
preliminary results on long-time behavior of the solutions as time
goes to infinity (like sequent convergence) was obtained in
\cite{LLW04}.
 \er


\section{Global Attractor and Exponential Attractors in $2D$}
\setcounter{equation}{0}

In this section we study the long time behavior of the system
\eqref{1b}--\eqref{3b} in terms of global and exponential
attractors. As suggested by Remark \ref{zero}, we work in the phase
spaces
\begin{equation*}
\Phi\,:=\dot H\times \dot H^2,\quad \Phi_{1}\,:=\, \dot V\times \dot
H^4
\end{equation*}
with the norms $\|(\bov,\vf)\|_{\Phi}^2\,:=\,\|\bov\|^2_{H} +
\|\vf\|^2_{H^2}$, $\|(\bov,\vf)\|_{\Phi_1}^2\,:=\,\|\bov\|^2_{V} +
\|\vf\|^2_{H^4}$, respectively. It is obvious that $\Phi_{1}$ is
compactly embedded into $\Phi$.

 Recall the definition of the global attractor (cf.  \cite{TE})
 \bd Suppose $\mathcal{X}$ is a complete metric space. Given a semigroup $S(t):\mathcal{X}\mapsto\mathcal{X}$, a subset
$\mathcal{A}\subset\mathcal{X}$ is the global attractor if
 (i) The set $\mathcal{A}$ is compact in $\mathcal{X}$; (ii) It is strictly invariant: $S(t)\mathcal{A}=\mathcal{A}$, $t\ge 0$;
 (iii) For every bounded set $B\,\subset \,\mathcal{X}$ and for every neighborhood $\mathcal{O}=\mathcal{O}(\mathcal{A})$ of
 $\mathcal{A}$ in $\mathcal{M}$, there exists a time $T=T(\mathcal{O})$ such that
 $ S(t)B\subset \mathcal{O}(\mathcal{A})$ for all $t\ge T$.
 \ed

 As far as our system is concerned, we define $S(t): \Phi\mapsto\Phi$ to be the map $(\bov_0,\vf_0)\mapsto
 (\bov(t),\vf(t))$.
 Unfortunately, Theorem \ref{existence} does not guarantee that $S(t)$ is well defined on the phase space
 $\Phi$, since we are not able to prove a uniqueness result for weak solutions. We will refer to $S(t)$ as a
 \emph{solution operator}, being aware of the fact that, in principle, $S(t)(\bov_0,\vf_0)$ could be
 multi-valued due to the possible non-uniqueness. In the cases in which uniqueness holds,
 with a little abuse of notation, we will still indicate with $S(t)$ the corresponding \emph{semigroup}.
 As a consequence of the possible non-uniqueness, as it will be further explained later, we will
 not directly construct the global attractor on the phase space $\Phi$ but rather on the "lifted" phase space of $\ell$-trajectories.

\subsection{Dissipativity}
 The following lower-order uniform estimate follows from the basic energy
 law:
\begin{lemma}
\label{low} Suppose $n=2,3$. For $\bov_0\in \dot H$, $\varphi_0\in
\dot H^2$, the weak solution to \eqref{1b}--\eqref{3b} has the
following uniform estimates
 \be
 \|\bov(t)\|+\|\varphi(t)\|_{H^2}\leq C, \quad t\geq 0, \label{unilow}
 \ee
 where $C>0$ is a constant depending on $\|\bov_0\|,
 \|\varphi_0\|_{H^2}, K$. Moreover,
 \be \int_0^{+\infty} \left(\|\nabla \bov(t)\|^2+\left\|-K\Delta^2
 \varphi(t)+\nabla\cdot f(\d(t))\right\|^2\right)dt\leq
 \max\left\{\frac{2}{\mu_4}, \frac{1}{\lambda}\right\}\mathcal{E}(0).\label{int}
 \ee
\end{lemma}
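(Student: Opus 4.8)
The plan is to establish \eqref{low} directly from the basic energy law of Proposition \ref{energy1}. First I would integrate the identity \eqref{energy} over $[0,t]$: since the right-hand side is manifestly nonpositive (all three terms in the volume integral have nonnegative integrands because $\mu_1,\mu_5\geq 0$, $\mu_4>0$, and the last term is $-\lambda\|\cdot\|^2$), we immediately get $\mathcal{E}(t)\leq\mathcal{E}(0)$ for all $t\geq 0$, together with the integrability statement
\begin{equation*}
\int_0^{+\infty}\!\left(\mu_1\|\d^\top D(\bov)\d\|^2+\tfrac{\mu_4}{2}\|\nabla\bov\|^2+2\mu_5\|D(\bov)\d\|^2+\lambda\|-K\Delta^2\vp+\nabla\cdot f(\d)\|^2\right)dt\leq\mathcal{E}(0).
\end{equation*}
Dropping the nonnegative $\mu_1$ and $\mu_5$ terms and dividing by $\min\{\mu_4/2,\lambda\}$ gives precisely \eqref{int}. (Strictly, this is proved for smooth solutions; for weak solutions one argues by the usual approximation/lower-semicontinuity along the semi-Galerkin scheme used in Theorem \ref{existence}, so that the energy \emph{inequality} $\mathcal{E}(t)\leq\mathcal{E}(0)$ holds.)

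Next I would convert the bound $\mathcal{E}(t)\leq\mathcal{E}(0)=:C_0$ into the stated $H$-norm and $H^2$-norm control. The bound on $\|\bov(t)\|$ is immediate from the first term of $\mathcal{E}$. For $\vp$, the energy controls $\frac{K}{2}\|\Delta\vp(t)\|^2$ and $\int_{\T^n}F(\d)(t)\,dx\geq 0$; since we are on $\dot H^2$, i.e.\ $\int_{\T^n}\vp\,dx=0$ and (by definition of $\dot{}$ on the components) $\int_{\T^n}\nabla\vp\,dx=0$ as well, the Poincar\'e inequality applied twice yields $\|\vp(t)\|_{H^2}\leq C\|\Delta\vp(t)\|\leq C(K)\sqrt{C_0}$. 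Here one should note that on $\T^n$ the full $H^2$-norm is equivalent to $\|\Delta\vp\|$ on the zero-mean (and zero-mean-gradient) subspace, a standard Fourier-series estimate. Finally $C_0=\mathcal{E}(0)=\frac12\|\bov_0\|^2+\frac{K}{2}\|\Delta\vp_0\|^2+\int_{\T^n}F(\nabla\vp_0)\,dx$, and the last integral is bounded by a continuous function of $\|\vp_0\|_{H^2}$ via the embedding $H^2(\T^n)\hookrightarrow L^\infty(\T^n)$ (valid for $n=2,3$), so $C$ depends only on $\|\bov_0\|$, $\|\vp_0\|_{H^2}$, and $K$ as claimed.

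The main (and really the only) subtlety is the passage from smooth to weak solutions: since uniqueness of weak solutions is not available, one cannot test the weak formulation with the solution itself to recover \eqref{energy} rigorously. The clean way around this is to observe that the weak solutions produced in Theorem \ref{existence} are constructed as limits of semi-Galerkin approximations which \emph{do} satisfy the energy identity (or at least an energy inequality) at the approximate level; passing to the limit with weak lower semicontinuity of the norms on the left-hand side and Fatou/convergence on the dissipation terms gives $\mathcal{E}(t)\leq\mathcal{E}(0)$ and \eqref{int} for the limit solution. Everything else is elementary. I would keep this remark brief in the write-up, citing \cite{Liu00} for the approximation argument, and present the rest as the short computation above.
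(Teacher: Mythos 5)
Your proof is correct and is exactly the argument the paper intends (the paper states Lemma \ref{low} without proof, as an immediate consequence of the basic energy law \eqref{energy}): integrate the identity, use the sign of the dissipation terms to get $\mathcal{E}(t)\le\mathcal{E}(0)$ and the integral bound with constant $\max\{2/\mu_4,1/\lambda\}$, then convert the energy bound into norm bounds via Poincar\'e on the zero-mean subspace of $\T^n$, with the energy inequality justified for weak solutions through the semi-Galerkin approximation. The only cosmetic point is that controlling $\int_{\T^n}F(\nabla\vp_0)\,dx$ by $\|\vp_0\|_{H^2}$ really uses $H^1\hookrightarrow L^4$ applied to $\nabla\vp_0$ (i.e.\ $H^2\hookrightarrow W^{1,4}$ for $n=2,3$) rather than $H^2\hookrightarrow L^\infty$ applied to $\vp_0$ itself, but this changes nothing.
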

Next, we prove some dissipative estimates for the weak solutions to
\eqref{1b}--\eqref{3b}.
\begin{lemma}
\label{abs} Suppose $n=2$. For $\bov_0\in \dot H$, $\varphi_0\in
\dot H^2$, any weak solution of \eqref{1b}--\eqref{3b} verifies
\begin{equation}
\label{dissipation} \|
(\bov(t),\vf(t))\|_{\Phi}^2\,\le\,\mathcal{D}(\|(\bov_0,\vf_0)\|_{\Phi})e^{-\alpha
t} + C,
\end{equation}
where the positive constants $C$ and $\alpha$ are independent on the
solution and depend only on the coefficients of the system.
\end{lemma}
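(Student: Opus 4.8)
## Proof proposal for Lemma \ref{abs}

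The plan is to establish a differential inequality for the energy-type functional
$$\mathcal{Y}(t) = \mathcal{E}(t) + \text{(lower-order coupling corrections)},$$
where $\mathcal{E}(t)$ is the total energy from \eqref{E}, and then apply a uniform Gronwall-type argument to get the exponential decay toward an absorbing ball. The starting point is the basic energy law \eqref{energy}, which already gives dissipation of $\|\nabla\bov\|^2$ and $\|-K\Delta^2\vp+\nabla\cdot f(\d)\|^2$. Since $\bov$ has zero mean (Remark \ref{zero}), Poincar\'e's inequality converts $\|\nabla\bov\|^2$ into control of $\|\bov\|^2_{\mathbf{H}^1}$, hence of $\|\bov\|^2$. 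The missing ingredient is dissipation of $\|\Delta\vp\|^2$: the term $\|-K\Delta^2\vp+\nabla\cdot f(\d)\|^2$, once expanded, contains $K^2\|\Delta^2\vp\|^2$, a cross term $-2K(\Delta^2\vp,\nabla\cdot f(\d))$ which after integration by parts equals $2K(\nabla\Delta\vp,\,f'(\d)\nabla\d)$ type expressions, and $\|\nabla\cdot f(\d)\|^2$. Using the $\mathbf{H}^2$-bound on $\d$ (from Lemma \ref{low}, which gives $\|\vp\|_{H^2}\le C$ and hence $\|\d\|_{\mathbf{L}^\infty}\le C$ in $2D$ by Sobolev embedding) one controls the lower-order pieces and extracts, via interpolation $\|\Delta\vp\|^2 \le \epsilon\|\Delta^2\vp\|^2 + C_\epsilon\|\vp\|^2$ combined with the mean-zero Poincar\'e inequality, a genuine coercive term $\delta\|\Delta\vp\|^2$ on the right-hand side.

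The key steps, in order, are: (i) recall from Lemma \ref{low} the uniform bound $\|\bov(t)\| + \|\vp(t)\|_{H^2}\le C$, which in $2D$ yields $\|\d\|_{\mathbf{L}^\infty}\le C$; (ii) expand the dissipative term in \eqref{energy} and, using integration by parts and the $\mathbf{L}^\infty$-bound on $\d$ together with the $\mathbf{H}^2$-bound, show
$$\left\|-K\Delta^2\vp+\nabla\cdot f(\d)\right\|^2 \ \ge\ \delta\left(\|\Delta^2\vp\|^2 + \|\Delta\vp\|^2\right) - C$$
for suitable $\delta>0$; (iii) combine with the viscous dissipation $\tfrac{\mu_4}{2}\|\nabla\bov\|^2 \ge \tfrac{\mu_4}{2}c_P\|\bov\|^2$ (Poincar\'e, zero mean) to obtain
$$\frac{d}{dt}\mathcal{E}(t) \le -\alpha_0\left(\|\bov(t)\|^2 + \|\Delta\vp(t)\|^2\right) + C;$$
(iv) observe that $\mathcal{E}(t) \le C(\|\bov\|^2 + \|\Delta\vp\|^2 + 1)$ since $\int F(\d)\,dx$ is controlled by the (already bounded) $\mathbf{H}^2$-norm of $\vp$, and conversely $\|\bov\|^2 + \|\Delta\vp\|^2 \le C\mathcal{E}(t) + C$ up to the coercivity of $F$; hence $\tfrac{d}{dt}\mathcal{E} \le -\alpha\mathcal{E} + C$; (v) integrate the Gronwall inequality to get $\mathcal{E}(t)\le \mathcal{E}(0)e^{-\alpha t} + C$, then translate back to the $\Phi$-norm using the Poincar\'e inequality for $\vp-\int\vp$ (here $\int\vp=0$), which gives $\|\vp\|^2_{H^2}\le C\|\Delta\vp\|^2$, and finally note $\mathcal{E}(0)\le \mathcal{D}(\|(\bov_0,\vf_0)\|_\Phi)$.

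The main obstacle is step (ii): the energy law only provides $\|-K\Delta^2\vp+\nabla\cdot f(\d)\|^2$ as a single square, and one must be careful that the cross terms arising from its expansion do not overwhelm the good $\|\Delta^2\vp\|^2$ term. The delicate point is that $\nabla\cdot f(\d) = \tfrac{1}{\epsilon^2}\nabla\cdot((|\d|^2-1)\d)$ is only second order in $\vp$, so $(\Delta^2\vp, \nabla\cdot f(\d))$ is, after integration by parts, bounded by $\|\nabla\Delta\vp\|$ times a quantity controlled by $\|\d\|_{\mathbf{L}^\infty}\|\d\|_{\mathbf{H}^1}$; this is absorbed into $\tfrac{K^2}{2}\|\Delta^2\vp\|^2$ via Young's inequality, using that $\|\nabla\Delta\vp\|$ interpolates between $\|\Delta\vp\|$ and $\|\Delta^2\vp\|$. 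Everything else is routine once the uniform $\mathbf{H}^2$-bound of Lemma \ref{low} is in hand; the $2D$ embedding $\mathbf{H}^2\hookrightarrow\mathbf{L}^\infty$ is exactly what makes the nonlinear terms tractable, which is why the hypothesis $n=2$ is used here.
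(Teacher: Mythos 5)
Your overall architecture (energy law plus a correction, then Gronwall) matches the paper's, but there is a genuine gap in how you generate the dissipation of $\|\Delta\vp\|^2$, and it breaks the statement you are trying to prove. In steps (i)--(ii) you extract coercivity from the squared term $\|-K\Delta^2\vp+\nabla\cdot f(\d)\|^2$ by invoking the uniform bounds of Lemma \ref{low} (in particular $\|\d\|_{\mathbf{L}^\infty}\le C$ via $\|\vp\|_{H^2}\le C$). But the constant in Lemma \ref{low} depends on $\|\bov_0\|$ and $\|\vp_0\|_{H^2}$. Every constant you produce downstream --- the $\delta$ and $C$ in your lower bound of step (ii), the constants in the norm equivalence of step (iv), hence the $\alpha$ and the \emph{additive} constant $C$ in $\tfrac{d}{dt}\mathcal{E}\le-\alpha\mathcal{E}+C$ --- therefore depends on the initial data. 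The conclusion you would reach is $\|(\bov(t),\vf(t))\|_\Phi^2\le \mathcal{D}(\cdot)e^{-\alpha t}+C(\bov_0,\vp_0)$, which is \emph{not} Lemma \ref{abs}: the whole point of \eqref{dissipation} is that the additive constant is independent of the solution, since it is the radius of the absorbing ball used to build $B_1$ and the attractor. There is no easy patch within your scheme, because $\|\nabla\cdot f(\d)\|$ cannot be controlled with solution-independent constants using only quantities dominated by $\mathcal{E}$ (you would need $\|\nabla\vp\|_{\mathbf{L}^\infty}$ or interpolation against $\|\Delta^2\vp\|$, and either way a data-dependent constant reappears, at best yielding $\tfrac{d}{dt}\mathcal{E}\le-\alpha\mathcal{E}+P(\mathcal{E})$ with $P$ superlinear).

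The paper sidesteps this entirely by never expanding the square $\|\mathcal{Q}\|^2$. Instead it tests equation \eqref{3b} with the lower-order multiplier $\vp$ itself, which produces the dissipation terms $K\lambda\|\Delta\vp\|^2$ and $\tfrac{\lambda}{\epsilon^2}\int_{\T^2}|\nabla\vp|^4\,dx$ \emph{identically}, with constants depending only on the coefficients; the quartic term then absorbs both the transport term $\int(\bov\cdot\nabla)\vp\,\vp\,dx$ (after Sobolev/Poincar\'e, paying only a multiple of $\|\nabla\bov\|^2$) and the lower-order term $\tfrac{\lambda}{\epsilon^2}\|\nabla\vp\|^2$. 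Adding a small multiple $\delta_2$ of this identity to the basic energy law, with $\delta_2$ chosen so that the $\|\nabla\bov\|^2$ contribution is dominated by the viscous dissipation $\tfrac{\mu_4}{2}\|\nabla\bov\|^2$, gives $\tfrac{d}{dt}\Psi+C_4\Psi\le C_5$ with $\Psi=\mathcal{E}+\tfrac{\delta_2}{2}\|\vp\|^2$ and all constants universal. You should replace your step (ii) with this multiplier argument; the rest of your plan (Poincar\'e for the zero-mean $\bov$ and $\vp$, norm equivalence of the modified energy, Gronwall) then goes through as written.
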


\begin{proof}
Multiplying \eqref{3b} with $\varphi$ and integrating over $\T^2$,
we get
 \be
 \frac12\frac{d}{dt} \|\varphi\|^2+K\lambda \|\Delta \varphi\|^2+\frac{\lambda}{\epsilon^2}\int_{\T^2}|\nabla \varphi|^4 dx
 =-\int_{\T^2} (\bov\cdot \nabla) \varphi \varphi dx+
 \frac{\lambda}{\epsilon^2}\int_{\T^2}|\nabla \varphi|^2 dx.\label{ab1}
 \ee
 The righthand side of \eqref{ab1} can be estimated as follows
 \bea
 \frac{\lambda}{\epsilon^2}\int_{\T^2}|\nabla \varphi|^2 dx
 &\leq&
 \frac{\lambda}{4\epsilon^2}\int_{\T^2}|\nabla \varphi|^4 dx+
 \frac{\lambda}{\epsilon^2}|Q|,\non
 \\
 -\int_{\T^2} (\bov\cdot \nabla) \varphi \varphi dx\leq \|\bov\|\|\nabla
 \varphi\|_{L^4}\|\varphi\|_{L^4}
 &\leq& \frac{1}{2\delta_1} \|\bov\|^2+ \frac{\delta_1}{2}\|\nabla
 \varphi\|_{L^4}^2\|\varphi\|_{L^4}^2,\non
 \eea
 $\delta_1>0$ is a small constant to be determined later. For
 $\bov\in\dot
 V$, we infer from the Poincar\'e inequality that
 $ \|\bov\|\leq C_P\|\nabla \bov\|$, where the constant $C_P>0$ depends only on $\T^2$. For $\varphi\in  \dot H^2$, we infer
 from the
 Sobolev embedding theorem, Poincar\'e
 inequality and H\"older inequality that
 \be |\T^2|^{-\frac14}\|\varphi \|\leq \|\varphi\|_{L^4}\leq C_1\|\nabla \varphi\|\leq
 C_1|\T^2|^\frac14\|\nabla \varphi\|_{\mathbf{L}^4},\non
 \ee
 where $C_1$ is constant depending only on $\T^2$.
 As a result,
 \bea
 && \frac{\delta_1}{2}\|\nabla
 \varphi\|_{\mathbf{L}^4}^2\|\varphi\|_{L^4}^2\leq
 \frac{\delta_1}{2}C_1^2|\T^2|^\frac12\int_{\T^2}|\nabla \varphi|^4dx,\non\\
 && \|\varphi\| ^2\leq C_1^2|\T^2|\|\nabla \varphi\|^2_{\mathbf{L}^4}\leq
 \frac{\lambda}{4\epsilon^2}\int_{\T^2}|\nabla
 \varphi|^4dx+\frac{\epsilon^2C_1^4|\T^2|^2}{\lambda}.\non
 \eea
 Hence, we deduce that
 \bea
 && \frac12\frac{d}{dt} \|\varphi\|^2+K\lambda \|\Delta \varphi\|^2
 +\left(\frac{\lambda}{2\epsilon^2}-\frac{\delta_1}{2}C_1^2|\T^2|^\frac12\right)\int_{\T^2}|\nabla
 \varphi|^4
 dx+\|\varphi\|^2\non\\
 &\leq& \frac{C_P}{2\delta_1} \|\nabla \bov\|^2+ \frac{\lambda}{\epsilon^2}|\T^2|+\frac{\epsilon^2C_1^4|\T^2|^2}{\lambda}.
 \label{ab1a}
 \eea
 Multiplying \eqref{ab1a} by $\delta_2>0$ and adding it to the basic
 energy law \eqref{energy}, we obtain
 \bea
  && \frac{d}{dt}\left[\frac{1}{2}\|\bov\|^2+\frac{K}{2}\|\Delta
\varphi\|^2+\int_{\T^2}F(\d)\,dx+\frac{\delta_2}{2}\|\varphi\|^2\right]+\int_{\T^2}\left[\mu_1(D_{kp}d_{k}d_{p})^2+
2\mu_5|D\d|^2 \right]dx\non\\
 \non\\
&& +\lambda \left\|-K\Delta^2
 \varphi+\nabla\cdot f(\nabla \vp)\right\|^2+\left(\frac{\mu_4}{2}-\frac{\delta_2C_P}{2\delta_1}\right)\|\nabla \bov\|^2+\delta_2
 K\lambda \|\Delta
 \varphi\|^2\non\\
 &&
 +\delta_2\left(\frac{\lambda}{2\epsilon^2}-\frac{\delta_1}{2}C_1^2|\T^2|^\frac12\right)\int_{\T^2}|\nabla
 \varphi|^4
 dx+\delta_2\|\varphi\|^2\non\\
 &\leq& \delta_2\left(\frac{\lambda}{\epsilon^2}|\T^2|+\frac{\epsilon^2C_1^4|\T^2|^2}{\lambda}\right).\label{ab2}
 \eea
 Take $\delta_1,\delta_2$ that satisfying
 \be
 \delta_1=\frac{\lambda}{2\epsilon^2C_1^2|\T^2|^\frac12},\quad
 \delta_2=\frac{\mu_4\delta_1}{2C_P}=\frac{\lambda\mu_4}{4\epsilon^2C_PC_1^2|\T^2|^\frac12}.\non
 \ee
 We deduce from \eqref{ab2} that
 \begin{eqnarray}
  && \frac{d}{dt}\left[\frac{1}{2}\|\bov\|^2+\frac{K}{2}\|\Delta
\varphi\|^2+\int_{\T^2}F(\d)\,dx+\frac{\delta_2}{2}\|\varphi\|^2\right]
+\frac{\mu_4}{4}\|\nabla \bov\|^2\non\\
 && \quad +\delta_2
 K\lambda \|\Delta
 \varphi\|^2+\frac{\delta_2\lambda}{4\epsilon^2}\int_{\T^2}|\nabla
 \varphi|^4 dx \leq \delta_2\left(\frac{\lambda}{\epsilon^2}|\T^2|+\frac{\epsilon^2C_1^4|\T^2|^2}{\lambda}\right).\label{ab2a}
 \end{eqnarray}
 Define $ \Psi(t):=\mathcal{E}(t)+\frac{\delta_2}{2}\|\varphi\|^2$. It is easy to see that
 \be
 C_3\left(\|\bov\|^2+\|\Delta \varphi\|^2+ \|\varphi\|^2+ \int_{\T^2}|\nabla \varphi|^4dx
 +1\right)\geq \Psi(t)\geq C_2(\|\bov\|^2+\|\varphi\|_{H^2}^2),\non
 \ee
 where $C_2$, $C_3$ are positive constants depending on $\T^2, K,
 \epsilon,\lambda,\mu_4$ but not on the solution.
 Thus, we can conclude that there exist two positive constants $C_4, C_5$ depending only on $\T, K,
 \epsilon, \lambda,\mu_4$ such
 that
 \bes \frac{d}{dt}\Psi(t)+ C_4\Psi(t)\leq C_5.\non
 \ees
 As a result,
 \bes
 \|\bov(t)\|^2+\|\varphi(t)\|_{H^2}^2\leq \frac{1}{C_2}\Psi(t)\leq
 \frac{1}{C_2}e^{-C_4 t}\Psi(0)+\frac{C_5}{C_2C_4}, \quad \forall\ t\geq 0.\non
 \ees
 The proof is complete.
\end{proof}

\subsection{Higher-order estimates}
Next, we show that the weak solutions turn out to be regular for
strictly positive times. This, will imply the compactness of the
solution operator $S(t)$.
 The following lemma plays an important role in the subsequent proof.
 It is worthwhile noting that, since the coupling in equation \eqref{3b} is weak, this result is valid both
 for $n=2,3$.

 \begin{lemma}\label{A2d}
  Suppose $n=2,3$. We have
 \be \frac{d}{dt} \|\nabla \Delta \vp\|^2 +\lambda K \left\|\nabla \Delta^2\varphi \right\|^2 \leq
 C(\|\nabla \bov\|^2+\|\nabla \Delta \vp\|^2)\|\nabla \Delta \vp\|^2+C\|\nabla \bov\|^2+C,\label{A2da}
 \ee
 where $C$ is a positive constant depending on $\|\bov_0\|$, $\|\vp_0\|_{H^2}$ and coefficients of the system.
 \end{lemma}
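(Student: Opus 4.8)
The plan is to derive the differential inequality \eqref{A2da} by testing the $\vp$-equation \eqref{3b} with a high-order elliptic operator applied to $\vp$. Since the left-hand side features $\|\nabla\Delta\vp\|^2$ and $\|\nabla\Delta^2\vp\|^2$, the natural multiplier is $\Delta^3\vp$ (equivalently, integrating by parts, one differentiates \eqref{3b} and pairs against $\nabla\Delta\vp$). Concretely, I would multiply \eqref{3b} by $\Delta^3\vp$ and integrate over $\T^2$ (or $\T^3$); using periodicity and integration by parts, the term $\vp_t\,\Delta^3\vp$ produces $\frac12\frac{d}{dt}\|\nabla\Delta\vp\|^2$ (after an even number of parts moves), the biharmonic term $-\lambda K\,\Delta^2\vp\,\Delta^3\vp$ produces the good dissipative term $-\lambda K\|\nabla\Delta^2\vp\|^2$, and the remaining two terms — the transport term $(\bov\cdot\nabla)\vp$ and the Ginzburg--Landau term $\lambda\nabla\cdot f(\d)$ — must be absorbed into the right-hand side.

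The core of the estimate is to bound $\big|\int (\bov\cdot\nabla\vp)\,\Delta^3\vp\,dx\big|$ and $\big|\lambda\int \nabla\cdot f(\nabla\vp)\,\Delta^3\vp\,dx\big|$. After integrating by parts to move derivatives off $\Delta^3\vp$ and onto the other factor (balancing so that at most $\nabla\Delta^2\vp$, i.e. half a derivative past $\Delta^2$, hits one side — or, more cleanly, splitting $\Delta^3\vp = \Delta(\Delta^2\vp)$ and shifting one $\Delta$ to each side to land on $\nabla\Delta^2\vp$), one estimates the resulting multilinear expressions by Hölder, Gagliardo--Nirenberg and Young's inequalities. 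The key inputs are the uniform a priori bounds already available: Lemma \ref{low} gives $\|\bov(t)\|\le C$ and $\|\vp(t)\|_{H^2}\le C$ for all $t\ge0$, hence $\|\d\|_{\mathbf H^2}\le C$ and, crucially via $\mathbf H^2\hookrightarrow\mathbf L^\infty$ (valid in dimension $n\le3$), $\|\d\|_{\mathbf L^\infty}\le C$. This $L^\infty$-control of $\d=\nabla\vp$ is precisely what makes the cubic nonlinearity $f(\d)=\epsilon^{-2}(|\d|^2-1)\d$ tractable: each factor of $f$ or its derivatives costs only one $\vp$-derivative more than the corresponding power of $\d$, with the lower-order powers of $\d$ controlled in $L^\infty$. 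One then uses interpolation inequalities such as $\|\nabla\Delta\vp\|_{\mathbf L^\infty}$, $\|\Delta^2\vp\|$ etc.\ controlled by $\|\vp\|_{H^2}$ and $\|\nabla\Delta^2\vp\|$ (Gagliardo--Nirenberg on $\T^n$), so that every term on the right is of the form $(\text{bounded})\cdot\|\nabla\Delta\vp\|^{a}\|\nabla\Delta^2\vp\|^{b}$ with $b<2$; Young's inequality then moves a small multiple of $\|\nabla\Delta^2\vp\|^2$ to the left-hand side and leaves terms of the shape $C(\|\nabla\bov\|^2+\|\nabla\Delta\vp\|^2)\|\nabla\Delta\vp\|^2 + C\|\nabla\bov\|^2 + C$.

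For the transport term, after integrating by parts and using $\nabla\cdot\bov=0$, the worst contribution is roughly $\int \partial\bov\cdot(\nabla\vp)(\nabla\Delta^2\vp)\,dx$ plus $\int \bov\cdot(\nabla^2\vp)(\nabla\Delta^2\vp)\,dx$; bounding $\nabla\vp$ in $L^\infty$, $\bov$ in $\mathbf L^4$ (via $H^1$), $\nabla^2\vp$ in $\mathbf L^4$ (interpolating between $H^2$ and higher norms), and $\nabla\bov$ in $L^2$, together with Young's inequality, yields exactly the $\|\nabla\bov\|^2\|\nabla\Delta\vp\|^2$ and $\|\nabla\bov\|^2$ type terms. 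I expect the main obstacle to be organizing the integration by parts and the Gagliardo--Nirenberg interpolations so that (a) no term requires more than the available dissipation $\|\nabla\Delta^2\vp\|^2$, and (b) the coefficient of $\|\nabla\Delta\vp\|^2$ on the right is itself multiplied by something integrable in time (here $\|\nabla\bov\|^2+\|\nabla\Delta\vp\|^2$), which is what later permits a uniform Gronwall/uniform bound argument — this bookkeeping, rather than any single estimate, is the delicate point. The fact that the coupling in \eqref{3b} enters only through the low-order transport term $\bov\cdot\nabla\vp$ (not through velocity derivatives) is what keeps the argument dimension-independent, as the statement emphasizes.
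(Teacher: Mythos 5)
Your overall strategy is the same as the paper's: multiply \eqref{3b} by $\Delta^3\vp$, integrate by parts over the torus to produce $\frac12\frac{d}{dt}\|\nabla\Delta\vp\|^2$ and the dissipative term $\lambda K\|\nabla\Delta^2\vp\|^2$, and then absorb the transport and Ginzburg--Landau contributions by H\"older, Gagliardo--Nirenberg and Young. However, there is a genuine gap in your choice of a priori inputs: you claim that Lemma \ref{low} gives $\|\d\|_{\mathbf H^2}\le C$ and hence, via $\mathbf H^2\hookrightarrow\mathbf L^\infty$, a uniform bound on $\|\d\|_{\mathbf L^\infty}$. This is off by one derivative. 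Lemma \ref{low} only yields $\|\vp\|_{H^2}\le C$, i.e.\ $\|\d\|_{\mathbf H^1}\le C$, and $\mathbf H^1$ does not embed into $\mathbf L^\infty$ in dimension $2$ or $3$. A uniform $\mathbf L^\infty$-bound on $\d$ is equivalent (via the embedding) to the $H^3$-bound on $\vp$ that is only established \emph{later}, in Lemma \ref{comp} and Corollary \ref{ninf}, precisely by integrating the inequality \eqref{A2da} you are trying to prove; assuming it here would be circular.

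The paper's proof works without any $\mathbf L^\infty$-control of $\d$: it uses the Agmon and Gagliardo--Nirenberg inequalities to write $\|\nabla\vp\|_{\mathbf L^\infty}\le C\|\vp\|_{H^3}^{1/2}\|\vp\|_{H^2}^{1/2}\le C(\|\nabla\Delta\vp\|^{1/2}+1)$ and $\|\nabla\nabla\vp\|_{\mathbf L^3}\le C(\|\nabla\Delta\vp\|^{1/2}+1)$, so that every occurrence of $\|\d\|_{\mathbf L^\infty}$ costs a half power of $\|\nabla\Delta\vp\|$. This is exactly what produces the term $C\|\nabla\Delta\vp\|^2\cdot\|\nabla\Delta\vp\|^2$ on the right-hand side of \eqref{A2da} (from the cubic nonlinearity, where $\|\nabla\vp\|_{\mathbf L^\infty}^4\le C(\|\nabla\Delta\vp\|^2+1)$), and the term $\|\nabla\bov\|^2\|\nabla\Delta\vp\|^2$ (from the transport term, where $\|\nabla\bov\|\,\|\nabla\vp\|_{\mathbf L^\infty}$ appears). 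Your version, with $\|\d\|_{\mathbf L^\infty}$ treated as a constant, would yield a linear differential inequality that is actually \emph{stronger} than \eqref{A2da} --- a sign that the premise is too strong. To repair the argument you should replace the constant $\mathbf L^\infty$-bound by these interpolation estimates anchored at the $H^2$-level; the rest of your bookkeeping (Young's inequality to absorb $\varepsilon\|\nabla\Delta^2\vp\|^2$, keeping the coefficient of $\|\nabla\Delta\vp\|^2$ time-integrable) then goes through as you describe.
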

 \begin{proof}
 We just work in the $3D$ case and it is easy to verify that the same result holds in
 $2D$.
 Multiplying \eqref{3b} by $\Delta^3 \vp$, integrating over $\T^3$, due to the periodic boundary condition, we have
 \bea && \frac{1}{2}\frac{d}{dt}\|\nabla \Delta \varphi\|^2+\lambda K \left\|\nabla \Delta^2\varphi \right\|^2\non\\
 &=& -\int_{\T^3} \nabla \Delta^2 \varphi\cdot \nabla(\bov\cdot \nabla
 \varphi) dx-\lambda  \int_{\T^3} \nabla \Delta^2\varphi \cdot \nabla \left[\nabla\cdot
 f(\d)\right] dx. \label{dndvp}
 \eea
 By the uniform estimates \eqref{dissipation}, the Agmon inequality and Gagliardo--Nirenberg
 inequality in $3D$, we get
 \bea
 \|\nabla \varphi\|_{L^\infty}&\leq& C\|\vp\|_{H^3}^\frac12\|\vp\|_{H^2}^\frac12\leq C(\|\nabla \Delta
 \varphi\|^\frac12+1),\non\\
  \|\nabla\nabla \varphi\|_{L^3}&\leq& C(\|\nabla
 \Delta \varphi\|^\frac12\|\Delta \vp\|^\frac12+\|\Delta \vp\|)\leq C(\|\nabla \Delta
 \varphi\|^\frac12+1).\non
 \eea
 Now we estimate the right-hand side of \eqref{dndvp} term by term.
 \bea
 \left|\int_{\T^2} \nabla \Delta^2 \varphi \cdot \nabla(\bov\cdot \nabla
 \varphi) dx\right|
 &\leq& C\|\nabla \Delta^2 \varphi\|(\|\nabla \bov\|\|\nabla
 \varphi\|_{\mathbf{L}^\infty}+ \|\bov\|_{\mathbf{L}^6}\|\nabla\nabla \varphi\|_{\mathbf{L}^3})\non\\
 &\leq& C\|\nabla \Delta^2 \varphi\|\|\nabla \bov\|(\|\nabla \Delta
 \varphi\|^\frac12+1)\non\\
 &\leq& \varepsilon\|\nabla \Delta^2 \varphi\|^2+\|\nabla
 \bov\|^2\|\nabla
 \Delta \varphi\|^2+C\|\nabla \bov\|^2.
 \non
 \eea
 \bea
 &&\lambda  \left|\int_{\T^2} \nabla \Delta^2\varphi \cdot \nabla \left[\nabla\cdot
 f(\d)\right] dx\right|=-\frac{\lambda }{\epsilon^2}\int_{\T^2} \nabla \Delta^2\varphi
 \cdot \nabla \left[(3|\nabla
 \varphi|^2-1)\Delta\varphi\right]dx\non\\
 &\leq& C\|\nabla \Delta^2 \varphi\|\|\nabla \Delta \varphi\|+ C\|\nabla \Delta^2
 \varphi\|\|\nabla \varphi\|_{\mathbf{L}^\infty}^2\|\nabla \Delta \varphi\|\non\\
 && +C\|\nabla \Delta^2
 \varphi\|\|\nabla \varphi\|_{\mathbf{L}^\infty}\|\nabla\nabla \varphi\|_{\mathbf{L}^3}\|\Delta
 \varphi\|_{L^6}\non\\
 &\leq& \varepsilon \|\nabla \Delta^2
 \varphi\|^2+ C\|\nabla \Delta \varphi\|^2+  C\|\nabla \Delta \varphi\|^2\|\nabla
 \varphi\|_{\mathbf{L}^\infty}^4+ C\|\nabla\nabla \varphi\|^2_{\mathbf{L}^3}\|\Delta
 \varphi\|^2_{L^6}\|\nabla
 \varphi\|_{\mathbf{L}^\infty}^2\non\\
 &\leq&\varepsilon \|\nabla \Delta^2
 \varphi\|^2+  C\|\nabla \Delta \varphi\|^4+C.
 \non
 \eea
  Taking $\varepsilon=\frac{\lambda K}{4}$, we infer from the above estiamtes that \eqref{A2da} holds.
   The proof is complete.
 \end{proof}
Denote
 \be
 \mathcal{Q}= -K\Delta^2
 \varphi+\nabla\cdot f(\d).\non
 \ee
 By the definition of $\mathcal{Q}$ and the Sobolev embedding theorem, we can easily derive the the following
 estimates.
 \begin{lemma}\label{eqes}
 Suppose $n=2,3$. We have
 $\|\nabla \Delta \vp\|\leq
 C\|\mathcal{Q}\|^\frac12+C$, $ \|\Delta^2 \varphi\|\leq \frac{2}{K}\|\mathcal{Q}\|+C$,
 $\|\nabla \Delta^2 \varphi\|\leq \frac{2}{K}\|\nabla
 \mathcal{Q}\|+C$, where $C$ is a constant depending on $\|\varphi\|_{H^2},K,\epsilon$. Moreover,
 $\| \Delta^3 \varphi\|\leq \frac{2}{K}\|\Delta
 \mathcal{Q}\|+C$, where $C$ is a constant depending on $\|\varphi\|_{H^3},K,\epsilon$.
 \end{lemma}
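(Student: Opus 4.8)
The plan is to read off everything from the defining relation $K\Delta^2\varphi = \nabla\cdot f(\d)-\mathcal{Q}$, integrating by parts over $\T^n$ and treating the cubic term $\nabla\cdot f(\d)$ perturbatively by Gagliardo--Nirenberg/Agmon interpolation (legitimate since $n\le 3$) together with the uniform bound $\|\varphi\|_{H^2}\le C$ from Lemma \ref{low}. The four estimates have to be established in the order stated, since each one is used to control the nonlinear term appearing in the next.

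First I would test the identity against $-\Delta^2\varphi$ in the form $\|\nabla\Delta\varphi\|^2 = -(\Delta\varphi,\Delta^2\varphi) = \tfrac1K(\Delta\varphi,\mathcal{Q}) - \tfrac1K(\Delta\varphi,\nabla\cdot f(\d)) = \tfrac1K(\Delta\varphi,\mathcal{Q}) + \tfrac1K(\nabla\Delta\varphi,f(\d))$, the last equality by integration by parts. Using $\|\Delta\varphi\|\le C$ and, via $\mathbf{H}^1\hookrightarrow\mathbf{L}^6$, $\|f(\d)\|\le \tfrac1{\epsilon^2}(\|\nabla\varphi\|_{\mathbf{L}^6}^3+\|\nabla\varphi\|)\le C$, one gets $\|\nabla\Delta\varphi\|^2\le \tfrac{C}{K}\|\mathcal{Q}\|+\tfrac{C}{K}\|\nabla\Delta\varphi\|$, and Young's inequality absorbs the last term to yield $\|\nabla\Delta\varphi\|\le C\|\mathcal{Q}\|^{1/2}+C$.

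For the second estimate, from $K\|\Delta^2\varphi\|\le\|\mathcal{Q}\|+\|\nabla\cdot f(\d)\|$ I would bound $\|\nabla\cdot f(\d)\| = \tfrac1{\epsilon^2}\|(3|\nabla\varphi|^2-1)\Delta\varphi\|\le C(1+\|\nabla\varphi\|_{\mathbf{L}^\infty}^2)\|\Delta\varphi\|$, and use Agmon's inequality $\|\nabla\varphi\|_{\mathbf{L}^\infty}\le C\|\nabla\varphi\|_{H^1}^{1/2}\|\nabla\varphi\|_{H^2}^{1/2}\le C(1+\|\nabla\Delta\varphi\|^{1/2})$ together with the first estimate, so that $\|\nabla\varphi\|_{\mathbf{L}^\infty}^2\le C\|\mathcal{Q}\|^{1/2}+C$ and hence $\|\nabla\cdot f(\d)\|\le C\|\mathcal{Q}\|^{1/2}+C\le\|\mathcal{Q}\|+C$ after Young; this gives $\|\Delta^2\varphi\|\le\tfrac2K\|\mathcal{Q}\|+C$. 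The third estimate is obtained by applying $\nabla$ to the relation, so $K\|\nabla\Delta^2\varphi\|\le\|\nabla\mathcal{Q}\|+\|\nabla[\nabla\cdot f(\d)]\|$, expanding $\nabla[\nabla\cdot f(\d)]=\tfrac1{\epsilon^2}\nabla[(3|\nabla\varphi|^2-1)\Delta\varphi]$, and estimating the resulting products (one involving $\nabla\Delta\varphi$, one involving $\nabla^2\varphi\,\nabla\varphi\,\Delta\varphi$) by H\"older plus Gagliardo--Nirenberg, using the bounds on $\|\nabla\Delta\varphi\|$ and $\|\Delta^2\varphi\|$ already in hand; all such terms are dominated by $\|\nabla\mathcal{Q}\|$ plus a constant, and Young closes it. The fourth estimate follows identically after applying $\Delta$ instead of $\nabla$, now also invoking $\|\varphi\|_{H^3}\le C$ in the constants.

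The one slightly delicate point is the third and fourth estimates, where $\nabla[\nabla\cdot f(\d)]$ and $\Delta[\nabla\cdot f(\d)]$ produce several products of derivatives of $\varphi$ of total order three, respectively four; one must verify that each such product is controlled, via Gagliardo--Nirenberg interpolation on $\T^n$ ($n\le 3$) and the previously established uniform bounds, by a small multiple of $\|\nabla\mathcal{Q}\|$ (resp. $\|\Delta\mathcal{Q}\|$) plus a constant, so that Young's inequality yields the clean coefficient $\tfrac2K$. Beyond this bookkeeping no real obstacle arises, since the velocity coupling is absent from $\mathcal{Q}$.
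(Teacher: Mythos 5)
The paper gives no proof of this lemma (it is dismissed with ``by the definition of $\mathcal{Q}$ and the Sobolev embedding theorem''), and your argument is a correct realization of exactly that intended route: triangle inequality on $K\Delta^2\vp=\nabla\cdot f(\d)-\mathcal{Q}$, the uniform $H^2$ (resp.\ $H^3$) bound, and Gagliardo--Nirenberg/Agmon interpolation, with the estimates derived in the stated order. The ``delicate point'' you flag does close as you predict: every term of $\nabla[\nabla\cdot f(\d)]$ (resp.\ $\Delta[\nabla\cdot f(\d)]$) interpolates between $H^2$ (resp.\ $H^3$) and $H^5$ (resp.\ $H^6$) with total exponent strictly less than one on the top norm, so Young lets you absorb it as $\eta\|\nabla\Delta^2\vp\|+C_\eta$ (resp.\ $\eta\|\Delta^3\vp\|+C_\eta$) into the left-hand side, which is what actually produces the clean coefficient $2/K$ --- a naive substitution of the already-proved bounds would give $C\|\nabla\mathcal{Q}\|$ with an uncontrolled constant.
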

 Next, we prove the following higher-order estimate for $\vp$:
\begin{lemma} \label{comp}
Suppose $n=2,3$.  For any $\bov_0\in  \dot H$, $\varphi_0\in  \dot
H^2$, the weak solution to \eqref{1b}--\eqref{3b} satisfies
 \be
 \label{pos-t-regu}
 \|\vp(t)\|_{H^3}\leq \frac{1+t}{t}\mathcal{D}(\|(\bov_0,\vf_0) \|_\Phi), \quad \forall t\,>\,0.
 \ee
  Moreover, if we assume in addition that $\vf_0\in H^3$, $\|\vp(t)\|_{H^3}$ can be bounded by a constant depending on
  $\|\bov_0\|$ and $\|\vp_0\|_{H^3}$ uniformly in time.
 \end{lemma}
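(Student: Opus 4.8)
The plan is to upgrade the uniform $H^2$-bound on $\vp$ (Lemma \ref{low}) to an $H^3$-bound for positive times by means of the differential inequality \eqref{A2da} established in Lemma \ref{A2d}, using a uniform integrability property that the basic energy law already provides. First I would observe that Lemma \ref{A2d} gives, writing $y(t):=\|\nabla\Delta\vp(t)\|^2$,
\be
\frac{d}{dt} y + \lambda K\|\nabla\Delta^2\vp\|^2 \le C(\|\nabla\bov\|^2 + y)\,y + C\|\nabla\bov\|^2 + C,\non
\ee
so in particular $\frac{d}{dt} y \le C(g(t) + y)\,y + C\,g(t) + C$ with $g(t):=\|\nabla\bov(t)\|^2$. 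The crucial structural fact is that $g$ and $\|\mathcal{Q}\|^2$ are uniformly integrable over time windows of length $1$: indeed \eqref{int} in Lemma \ref{low} shows $\int_0^{+\infty}(\|\nabla\bov\|^2 + \|\mathcal{Q}\|^2)\,dt \le C(\|\bov_0\|,\|\vp_0\|_{H^2},K,\lambda,\mu_4)$, and by Lemma \ref{eqes} we have $y = \|\nabla\Delta\vp\|^2 \le C\|\mathcal{Q}\| + C$, hence $\int_t^{t+1} y(s)\,ds \le \mathcal{D}(\|(\bov_0,\vf_0)\|_\Phi)$ for every $t\ge 0$.

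The core of the argument is then a uniform Gronwall / uniform integrability lemma (as in Temam \cite{TE}): from $\frac{d}{dt} y \le (a(t) + b\,y)\,y + c(t)$ with $\int_t^{t+1}(a + y + c)\,ds$ bounded uniformly in $t$, one concludes $y(t) \le \frac{1+t}{t}\,\mathcal{D}(\|(\bov_0,\vf_0)\|_\Phi)$ for all $t>0$. The mechanism is standard: on each interval $[t,t+1]$ there is a point $s_0$ where $y(s_0)$ is at most the average $\int_t^{t+1} y\,ds$, hence controlled; then integrating the differential inequality from $s_0$ forward over at most unit length and absorbing the quadratic term via the uniform integrability of $a+y$ gives the bound at time $t+1$. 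For $t\ge 1$ this yields a uniform-in-time bound; for $0<t<1$ the factor $\frac{1+t}{t}$ absorbs the blow-up as $t\to 0^+$, which is expected since we only assumed $\vf_0\in\dot H^2$. Combining with $\|\vp(t)\|\le C$ from Lemma \ref{low} and the Poincar\'e inequality, $\|\vp(t)\|_{H^3}$ is then controlled by $\|\nabla\Delta\vp(t)\|$ plus lower-order terms, giving \eqref{pos-t-regu}.

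For the last assertion, if additionally $\vf_0\in H^3$, then $y(0)=\|\nabla\Delta\vp_0\|^2$ is finite and controlled by $\|\vp_0\|_{H^3}$, so there is no need to pick an interior starting point $s_0$: one integrates \eqref{A2da} from $0$ and the same uniform integrability of $\|\nabla\bov\|^2 + y$ yields a bound on $y(t)$ depending only on $\|\bov_0\|$, $\|\vp_0\|_{H^3}$ and the coefficients, uniformly for all $t\ge 0$. The main obstacle is the quadratic-in-$y$ nonlinearity on the right-hand side of \eqref{A2da}: one must make sure the "bad" term $\int_t^{t+1} b\,y^2\,ds$ is harmless, which works precisely because $\int_t^{t+1} y\,ds$ is already bounded (so $y$ cannot stay large on a long subinterval) and the dissipative term $\lambda K\|\nabla\Delta^2\vp\|^2$, together with Lemma \ref{eqes}, provides the needed control; keeping track of the constants so that they depend only on the $\Phi$-norm of the data (and not on the solution) is the careful point.
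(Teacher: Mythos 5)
Your proposal is correct and follows essentially the same route as the paper: both rest on the differential inequality \eqref{A2da} of Lemma \ref{A2d}, the bound $\|\nabla\Delta\vp\|^2\le C\|\mathcal{Q}\|+C$ from Lemma \ref{eqes} combined with the integrability \eqref{int} to control $\int_t^{t+1}(\|\nabla\bov\|^2+\|\nabla\Delta\vp\|^2)\,ds$ uniformly in $t$, and then the uniform Gronwall lemma (whose mechanism you spell out) for $t>0$, with the standard Gronwall inequality from $t=0$ in the case $\vp_0\in H^3$. The only cosmetic difference is that the paper simply cites the uniform Gronwall lemma of Temam rather than re-deriving it, and the dissipative term $\lambda K\|\nabla\Delta^2\vp\|^2$ plays no role in either argument.
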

  \begin{proof}
 We infer from Lemma \ref{low} and Lemma \ref{eqes} that for any
 $r>0$ and $t\geq 0$,
 \be
  \sup_{t\geq 0} \int_t^{t+r} \|\nabla \Delta
 \vp(\tau)\|^2d\tau \leq \sup_{t\geq 0} C\int_t^{t+r}\|\mathcal{Q}(\tau)\|
 d\tau+Cr\leq C \int_0^\infty\|\mathcal{Q}(\tau)\|^2+Cr\leq C(1+r),\label{inter1}
 \ee
 \be
 \sup_{t\geq 0} \int_t^{t+r} \|\nabla \bov(\tau)\|^2d\tau\leq \int_0^{+\infty} \|\nabla \bov(\tau)\|^2d\tau\leq C.\label{inter2}
 \ee
 It follows from \eqref{A2da} and the uniform Gronwall lemma \cite[Lemma III.1.1]{TE} that
 \be
 \|\nabla \Delta \vp(t+r)\|^2\leq C\left(1+\frac{1}{r}\right), \quad \forall t\geq 0,\label{uge}
 \ee
 which together with \eqref{dissipation} yields \eqref{pos-t-regu}.

If we assume that $\varphi_0\in H^3$, then by \eqref{A2da},
\eqref{inter1}, \eqref{inter2} and the
 standard Gronwall inequality, we have
  \bea
   && \|\nabla \Delta \vp(t)\|^2\non\\
    &\leq& \|\nabla \Delta \vp_0\|^2{\rm exp}\left(C\int_0^t (\|\nabla \bov(\tau)\|^2
   +\|\nabla \Delta \vp(\tau)\|^2+1)d\tau\right)\non\\
   && +C\int_0^t (\|\nabla \bov(s)\|^2+1)\ \! {\rm exp}\left(-C\int_0^s (\|\nabla \bov(\tau)\|^2
   +\|\nabla \Delta \vp(\tau)\|^2+1)d\tau\right)ds\non\\
   &\leq& \|\nabla \Delta \vp_0\|^2{\rm exp}\left(C\int_0^1 (\|\nabla \bov(\tau)\|^2
   +\|\nabla \Delta \vp(\tau)\|^2+1)d\tau\right)+C\int_0^1 (\|\nabla \bov(s)\|^2+1)ds\non\\
   &\leq& C,\quad \forall \ t\in [0,1],\non
   \eea
 where $C$ is a constant depending on $\|\bov_0\|$, $\|\vp_0\|_{H^3}$. Taking $r=1$
 in \eqref{uge} and using \eqref{dissipation}, we obtain the uniform
 estimate on $\|\vp(t)\|_{H^3}$ for all $t\geq 0$. The proof is complete.
  \end{proof}
  By the Sobolov embedding theorem, we easily deduce the follow result
 \bc
 \label{ninf}
 Suppose  $n=2,3$. For any $\bov_0\in  \dot H$, $\varphi_0\in  \dot H^2$,  we have
 \be
 \label{vpin}
 \|\nabla \vp(t)\|_{\mathbf{L}^\infty}\leq \frac{1+t}{t}\mathcal{D}(\|(\bov_0,\vf_0) \|_\Phi), \quad \forall t\,>\,0.
 \ee
  Moreover, if we assume in addition that $\vf_0\in H^3$, $\|\nabla \vp(t)\|_{\mathbf{L}^\infty}$
  can be bounded by a constant depending on  $\|\bov_0\|$ and $\|\vp_0\|_{H^3}$ uniformly in time.
 \ec
 Using Corollary \ref{ninf}, we are able to derive the higher-order energy inequality in $2D$.
 \begin{lemma}\label{A22d}
 Suppose $n=2$. Let $$\mathbf{A}(t)=\|\nabla
 \bov(t)\|^2+\alpha\left\|\mathcal{Q}(t)\right\|^2,$$
  where $\alpha>0$ is a small constant to be chosen later (cf. \eqref{alpha} below). We have
 \be \frac{d}{dt}\mathbf{A}(t)+\frac{\mu_4}{4}\|\Delta \bov\|^2
 +\frac{\alpha\lambda K}{2}\|\Delta \mathcal{Q}\|^2\leq C(\mathbf{A}^2(t)+\mathbf{A}(t)), \quad \forall\ t\geq t_1>0,
 \label{A22da}
 \ee
 where $t_1>0$ is arbitrary and $C$ is a constant depending on $\|\bov_0\|$,
 $\|\varphi_0\|_{H^2}$ and $t_1$. Moreover,
 if we assume that $\varphi_0\in
 H^3$, \eqref{A22da} holds for $t\geq 0$ with $C$ being dependent of $\|\bov_0\|$,
 $\|\varphi_0\|_{H^3}$.
 \end{lemma}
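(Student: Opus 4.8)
The plan is to derive two differential inequalities — one for $\|\nabla\bov\|^2$ coming from the Navier--Stokes equation \eqref{1b} tested with $-\Delta\bov$ (or equivalently $A\bov$, the Stokes operator), and one for $\|\mathcal{Q}\|^2$ coming from \eqref{3b} differentiated and tested suitably — and then combine them with a small weight $\alpha$ so that the dangerous cross terms are absorbed. First I would multiply \eqref{1b} by $-\Delta\bov$ and integrate over $\T^2$: the viscous term yields $\frac{\mu_4}{2}\|\Delta\bov\|^2$, the pressure term drops by incompressibility and periodicity, and the left side produces $\frac12\frac{d}{dt}\|\nabla\bov\|^2$. The convective term $((\bov\cdot\nabla)\bov,\Delta\bov)$ is handled by the classical 2D estimate $\le C\|\bov\|^{1/2}\|\nabla\bov\|\|\Delta\bov\|^{3/2}\le \frac{\mu_4}{16}\|\Delta\bov\|^2 + C\|\nabla\bov\|^4$, using Lemma \ref{low} to bound $\|\bov\|$. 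The stress terms $(\tilde\sigma^d+\tilde\sigma^e,\nabla(-\Delta\bov))$ — after integrating by parts once more so that no third derivatives of $\bov$ actually appear, writing them as $(\nabla\cdot(\tilde\sigma^d+\tilde\sigma^e),-\Delta\bov)$ — are the place where $\|\nabla\vp\|_{\mathbf L^\infty}$ from Corollary \ref{ninf} enters: each term in $\tilde\sigma^e$ is a product of $\d=\nabla\vp$ with up to third-order derivatives of $\vp$, and each term in $\tilde\sigma^d$ is quadratic in $\d$ times $D(\bov)$; using $\|\nabla\vp\|_{\mathbf L^\infty}\le C$ (valid for $t\ge t_1$, whence the restriction in the statement, or for all $t\ge0$ if $\vp_0\in H^3$), Gagliardo--Nirenberg interpolation in 2D, and Young's inequality, these are bounded by $\frac{\mu_4}{16}\|\Delta\bov\|^2 + \varepsilon\|\Delta\mathcal Q\|^2 + C(\mathbf A^2+\mathbf A)$, where Lemma \ref{eqes} is used to convert $\|\vp\|_{H^3},\|\vp\|_{H^4}$-type quantities into powers of $\|\mathcal Q\|$ and $\|\nabla\mathcal Q\|\le C\|\mathcal Q\|^{1/2}\|\Delta\mathcal Q\|^{1/2}+\dots$.

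Next I would estimate $\frac{d}{dt}\|\mathcal Q\|^2$. Since $\mathcal Q=-K\Delta^2\vp+\nabla\cdot f(\d)$, one has $\vp_t+\bov\cdot\nabla\vp=\lambda\mathcal Q$, so $\mathcal Q_t = -K\Delta^2\vp_t + (\nabla\cdot f(\d))_t$; the cleanest route is to test \eqref{3b} after applying $\Delta^2$ (or to differentiate the identity $\mathcal Q = \lambda^{-1}(\vp_t+\bov\cdot\nabla\vp)$) and pair with $\Delta^2\mathcal Q$ so as to generate $\frac12\frac{d}{dt}\|\mathcal Q\|^2 + \lambda K\|\Delta\mathcal Q\|^2$ on the left, modulo lower-order terms controlled via Lemma \ref{A2d}, Lemma \ref{eqes} and Corollary \ref{ninf}. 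The transport term $\bov\cdot\nabla\vp$ and its derivatives, when paired against $\Delta^2\mathcal Q$, produce terms like $\|\Delta\bov\|\,\|\nabla\vp\|_{\mathbf L^\infty}\,\|\Delta\mathcal Q\|$ and $\|\nabla\bov\|\,\|\Delta^2\vp\|_{\mathbf L^4}\,\|\Delta\mathcal Q\|$, which after Young's inequality give $\delta\|\Delta\bov\|^2 + \varepsilon\|\Delta\mathcal Q\|^2 + C(\mathbf A^2+\mathbf A)$; crucially the $\|\Delta\bov\|^2$ here appears with a constant $\delta$ that we will multiply by $\alpha$. Adding $\alpha$ times this second inequality to the first, I would then choose $\varepsilon$ small (e.g. $\varepsilon=\lambda K\alpha/8$) to absorb the $\varepsilon\|\Delta\mathcal Q\|^2$ contributions into $\frac{\alpha\lambda K}{2}\|\Delta\mathcal Q\|^2$, and choose $\alpha$ small enough (this is the defining condition \eqref{alpha} for $\alpha$) that $\alpha\delta\|\Delta\bov\|^2 \le \frac{\mu_4}{16}\|\Delta\bov\|^2$, so that altogether the $\|\Delta\bov\|^2$ coefficient stays at least $\mu_4/4$. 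What remains on the right is exactly $C(\mathbf A^2+\mathbf A)$, which is \eqref{A22da}.

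The main obstacle is the bookkeeping of the highest-order stress terms in the velocity equation and the highest-order transport terms in the $\mathcal Q$-equation: one must be careful that every term genuinely closes, i.e. that the top-order factors $\|\Delta\bov\|$ and $\|\Delta\mathcal Q\|$ each appear at most linearly (so Young's inequality applies) and that all remaining factors are either uniformly bounded — via Lemma \ref{low} and Corollary \ref{ninf}, which is why $\|\nabla\vp\|_{\mathbf L^\infty}\le C$ is indispensable and why the 2D setting is used for the interpolation exponents — or are bounded by powers of $\mathbf A$ through Lemma \ref{eqes}. A secondary subtlety is the order in which constants are fixed: $\varepsilon$ and $\delta$ must be pinned down before $\alpha$, and $\alpha$'s smallness must be compatible with all the Young's-inequality splittings simultaneously; since all these constants depend only on the coefficients of the system and on the uniform bounds of Lemma \ref{low} (plus $t_1$, through Corollary \ref{ninf}), the resulting $C$ has the claimed dependence. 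The case $\vp_0\in H^3$ is identical except that Corollary \ref{ninf} gives the $\mathbf L^\infty$-bound on $\nabla\vp$ up to $t=0$, so $t_1$ may be taken to be $0$.
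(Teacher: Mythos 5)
Your overall strategy is the same as the paper's: test \eqref{1b} with $-\Delta\bov$, derive an evolution inequality for $\|\mathcal{Q}\|^2$ from \eqref{3b}, add the two with weight $\alpha$, and fix $\alpha$ small through the uniform $\mathbf{L}^\infty$-bound on $\nabla\vp$ from Corollary \ref{ninf} (which is indeed where the restriction $t\geq t_1$, respectively the hypothesis $\vp_0\in H^3$, enters). You have also correctly identified the one delicate cross term $\|\Delta\bov\|\,\|\Delta\mathcal{Q}\|$ and the fact that its $\|\Delta\bov\|^2$ part must carry a factor $1/\alpha$ so as to be absorbed after the $\alpha$-weighting.

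There is, however, a genuine gap in your treatment of the dissipative stress $\tilde{\sigma}^d$. You propose to rewrite the stress contribution as $(\nabla\cdot(\tilde{\sigma}^d+\tilde{\sigma}^e),-\Delta\bov)$ and then bound it by $\frac{\mu_4}{16}\|\Delta\bov\|^2+\varepsilon\|\Delta\mathcal{Q}\|^2+C(\mathbf{A}^2+\mathbf{A})$ using $\|\d\|_{\mathbf{L}^\infty}\leq M$, interpolation and Young's inequality. This cannot work for the $\mu_1$- and $\mu_5$-terms: when the divergence falls on $D(\bov)$ the resulting integrand is, e.g., $\mu_1 d_kd_pd_id_j(\nabla_jD_{kp})\Delta v_i$, whose integral is only controlled by $C\mu_1M^4\|\Delta\bov\|^2$. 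This is already \emph{quadratic} in the top-order quantity $\|\Delta\bov\|$ with a coefficient that is $O(1)$ rather than small (it involves the non-small bound $M$ and the arbitrary coefficients $\mu_1,\mu_5$), so no application of Young's inequality can push it below $\frac{\mu_4}{2}\|\Delta\bov\|^2$ unless one assumes $\mu_4$ large --- exactly the assumption the lemma is meant to avoid. The paper's resolution is structural rather than a matter of bookkeeping: integrating by parts twice to move $\nabla_l\nabla_l$ into the combination $\nabla_l\nabla_j v_i$ and using the symmetry of $\tilde{\sigma}^d$ in $(i,j)$, the pairing becomes $-\mu_1\int_{\T^2}\nabla_l(d_kD_{kp}d_pd_id_j)\,\nabla_lD_{ij}\,dx$ (and similarly for $\mu_5$), whose leading part --- the one where $\nabla_l$ hits $D$ --- is exactly $-\mu_1\int_{\T^2}(d_id_j\nabla_lD_{ij})^2dx\leq 0$, resp.\ $-2\mu_5\int_{\T^2}(\nabla_lD_{ik}d_k)^2dx\leq 0$. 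These sign-definite terms are kept on the left, and only the commutator terms $I_1,\dots,I_4$, in which the derivative falls on $\d$, remain; those are genuinely linear in $\nabla D(\bov)$ and close under Young. Without extracting this non-positive quadratic form your velocity inequality does not close. A secondary, harmless discrepancy: estimating $\nabla\cdot\tilde{\sigma}^e$ directly, instead of via the identity behind \eqref{1b1} which reduces it to $-\mathcal{Q}\d$ plus a gradient absorbed into the pressure, produces an extra additive constant on the right of \eqref{A22da}; that would not affect any later use of the lemma.
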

 \begin{proof}
 Recall the computation in \cite[pp. 1475]{CG10} that
 $ \nabla \cdot \tilde{\sigma}^e = -(\nabla \cdot f(\d)) \d -\nabla F(\d) +K\Delta ^2\vp \nabla \vp
 -K\nabla \left(\frac{|\nabla \vp|^2}{2}\right) $.
 We note that \eqref{1b} can be written in the following form
 \be
 \bov_t+ \bov\cdot\nabla \bov-\frac{\mu_4}{2} \Delta \bov+\nabla P
=\nabla \cdot \tilde{\sigma}^d+  (K\Delta ^2\vp-\nabla \cdot
f(\d))\d,\label{1b1}
  \ee
where $P=p+\nabla \left(\frac{K|\nabla \vp|^2}{2}+F(\d)\right)$.
Using \eqref{1b1}, we have
 \bea
 && \frac{1}{2}\frac{d}{dt}\|\nabla \bov\|^2= -\int_{\T^2} \bov_t\cdot
 \Delta \bov dx\non\\
 &=& \int_{\T^2} (\bov\cdot\nabla) \bov\cdot \Delta \bov dx - \frac{\mu_4}{2}\|\Delta \bov\|^2
 - \mu_1\int_{\T^2}[\nabla\cdot((\d^\top D(\bov)\d)\d\otimes\d)] \cdot \Delta \bov dx \nonumber\\
&&- \mu_5\int_{\T^2}[\nabla\cdot((D(\bov)\d\otimes\d+\d\otimes
D(\bov)\d))]\cdot \Delta \bov dx\non\\
&& -\int_{\T^2}[(K\Delta ^2\vp-\nabla \cdot f(\d))\d] \cdot \Delta
\bov dx.\label{DV}
 \eea
 Using the periodic boundary condition and integration by parts, the
 right-hand side of \eqref{DV} can be manipulated as follows
 \bea
 && -\mu_1\int_{\T^2}\nabla\cdot[(\d^\top D(\bov)\d)\d\otimes\d] \cdot \Delta \bov
 dx=-\mu_1\int_{\T^2}\nabla_j(d_kD_{kp}d_pd_id_j)\nabla_l\nabla_lv_i
 dx\non\\
 &=& -\mu_1\int_{\T^2}\nabla_l(d_kD_{kp}d_pd_id_j)\nabla_l\nabla_jv_i
 dx=
 -\mu_1\int_{\T^2}\nabla_l(d_kD_{kp}d_pd_id_j)\nabla_lD_{ij}dx\non\\
 &=& -\mu_1\int_{\T^2} (d_kd_p\nabla_l D_{kp})^2
 dx-2\mu_1\int_{\T^2}
D_{kp}\nabla_l d_kd_pd_id_j\nabla _l D_{ij} dx\non\\
 && -2\mu_1\int_{\T^2} D_{kp}d_kd_pd_i\nabla_l d_j\nabla_lD_{ij}
 dx:= -\mu_1\int_{\T^2} (d_id_j\nabla_l D_{ij})^2 dx+ I_1+I_2.\non
 \eea
 \bea
 && - \mu_5\int_{\T^2}\nabla\cdot[(D(\bov)\d\otimes\d+\d\otimes
D(\bov)\d)]\cdot \Delta \bov dx\non\\
&=& - \mu_5\int_{\T^2}\nabla_j(D_{ik}d_kd_j)\nabla_l\nabla_lv_i dx-
\mu_5\int_{\T^2} \nabla_i(d_jD_{ik}d_k )\nabla_l\nabla_lv_j dx\non\\
&=& - \mu_5\int_{\T^2}\nabla_l(D_{ik}d_kd_j)\nabla_l\nabla_jv_i dx-
\mu_5\int_{\T^2} \nabla_l(d_jD_{ik}d_k )\nabla_l\nabla_iv_j dx
\non\\
&=& - 2\mu_5\int_{\T^2}\nabla_l(D_{ik}d_kd_j)\nabla_lD_{ij} dx\non\\
&=& - 2\mu_5\int_{\T^2}(\nabla_lD_{ik}d_k)^2 dx-
2\mu_5\int_{\T^2}\nabla_l d_jd_kD_{ik}\nabla_l D_{ij} dx
-2\mu_5\int_{\T^2}
d_j\nabla_l d_k D_{ik}\nabla_lD_{ij} dx\non\\
 &:=& - 2\mu_5\int_{\T^2}(\nabla_lD_{ik}d_k)^2 dx+I_3+I_4.\non
 \eea
 \be
 -\int_{\T^2}[(K\Delta ^2\vp-\nabla \cdot
f(\d))\d] \cdot \Delta \bov dx:=I_5.\non
 \ee
 Summing up, we have
 \bea
 &&\frac{1}{2}\frac{d}{dt}\|\nabla \bov\|^2+ \frac{\mu_4}{2}\|\Delta \bov\|^2+\mu_1\int_{\T^2} (d_id_j\nabla_l D_{ij})^2 dx
 +2\mu_5\int_{\T^2} (\nabla_l D_{ik}d_k)^2 dx\non\\
&=& \int_{\T^2} (\bov\cdot\nabla)\bov\cdot \Delta \bov
dx+\sum_{k=1}^5 I_k.\label{dnv}
 \eea
 By Lemma
\ref{comp} and Corollary \ref{ninf}, for any $t_1>0$, we have
obtained the uniform estimate:
 \be
 \|\vp(t)\|_{H^3}+ \| \nabla \vp(t)\|_{\mathbf{L}^\infty}\leq M, \quad \forall t\geq
 t_1>0. \label{bdvp3}
 \ee
 We now apply the Gagliardo--Nirenberg inequality, Lemma \ref{eqes} and
\eqref{bdvp3} to estimate the right-hand side of \eqref{dnv}.
 \be
   \int_{\T^2} (\bov\cdot\nabla)\bov\cdot \Delta \bov
dx \leq \|\bov\|_{\mathbf{L}^4}\|\nabla
\bov\|_{\mathbf{L}^4}\|\Delta \bov\|\leq \frac{\mu_4}{8} \|\Delta
\bov\|^2+ C\|\nabla \bov\|^4,\non
 \ee
 Since
 \bea
 && \|\nabla
 \bov\|^2\|\nabla \d\|^2_{\mathbf{L}^\infty}\|\d\|^2_{\mathbf{L}^\infty}\leq C \|\nabla
 \bov\|^2( \|\Delta ^2 \varphi\|+1)\non\\
 &\leq& C \|\nabla
 \bov\|^2( \|\mathcal{Q}\|+1)\leq C\|\mathcal{Q}\|^2+ C \|\nabla
 \bov\|^4+C\|\nabla \bov\|^2,\non
 \eea
 we have
 \bea
 I_1  &\leq& \frac{\mu_1}{4}\int_{\T^2} (d_id_j\nabla_lD_{ij})^2 dx+C \|\nabla
 \bov\|^2\|\nabla \d\|^2_{\mathbf{L}^\infty}\|\d\|^2_{\mathbf{L}^\infty}\non\\
 &\leq& \frac{\mu_1}{4}\int_{\T^2} (d_id_j\nabla_lD_{ij})^2 dx+ C\|\mathcal{Q}\|^2+ C \|\nabla
 \bov\|^4+C\|\nabla \bov\|^2.\non
 \eea
 For $I_2$, after integrating by parts, we have
 \bea
I_2&=&-2\mu_1\int_{\T^2}D_{kp}d_kd_pd_i\nabla_ld_j \nabla_lD_{ij}dx
\non\\
&=&2\mu_1\int_{\T^2}\nabla_lD_{kp}d_kd_pd_i\nabla_ld_j
D_{ij}dx+4\mu_1\int_{\T^2}D_{kp}\nabla_ld_k d_pd_i\nabla_ld_jD_{ij}dx \non\\
&&+2\mu_1\int_{\T^2}D_{kp}d_kd_p\nabla_ld_i\nabla_ld_jD_{ij}
dx+2\mu_1\int_{\T^2}D_{kp}d_kd_pd_i\nabla_l\nabla_l d_jD_{ij}dx
\non\\
&:=& I_{2a}+I_{2b}+I_{2c}+I_{2d},\non
 \eea
 where
 \bea
 I_{2a}&\leq& \frac{\mu_1}{4}\int_{\T^2}(d_id_j\nabla_lD_{ij})^2 dx+C \|\nabla
 \bov\|^2\|\nabla \d\|^2_{\mathbf{L}^\infty}\|\d\|^2_{\mathbf{L}^\infty}\non\\
 &\leq&
 \frac{\mu_1}{4}\int_{\T^2} (d_id_j\nabla_lD_{ij})^2 dx+C\|\mathcal{Q}\|^2+ C \|\nabla
 \bov\|^4+C\|\nabla \bov\|^2,\non\\
 I_{2b}+I_{2c}&\leq &C \|\nabla
 \bov\|^2\|\nabla \d\|^2_{\mathbf{L}^\infty}\|\d\|^2_{\mathbf{L}^\infty}\leq C\|\mathcal{Q}\|^2+ C \|\nabla
 \bov\|^4+C\|\nabla \bov\|^2,\non\\
  I_{2d}&\leq& C\|\d\|_{\mathbf{L}^\infty}^3\|\Delta \d\|\|\nabla \bov\|_{\mathbf{L}^4}^2\leq
   C(\|\Delta^2\vp\|^\frac12+1)\|\Delta \bov\|\|\nabla
  \bov\|\non\\
  &\leq& \frac{\mu_4}{8}\|\Delta
  \bov\|^2+ C\|\mathcal{Q}\|^2+ C \|\nabla
 \bov\|^4 +C\|\nabla \bov\|^2.\non
 \eea
As a consequence,
 \be
 I_2\leq \frac{\mu_1}{4}\int_{\T^2} (d_id_j\nabla_lD_{ij})^2
  dx+\frac{\mu_4}{8}\|\Delta
  \bov\|^2+C\|\mathcal{Q}\|^2+C \|\nabla
 \bov\|^4+C\|\nabla \bov\|^2.\non
 \ee
 Next,
 \begin{eqnarray*}
 I_3+I_4&\leq& \|\Delta \bov\|\|\nabla \bov\|\|\d\|_{\mathbf{L}^\infty}\|\nabla
 \d\|_{\mathbf{L}^\infty}\leq \varepsilon\|\Delta \bov\|^2+C \|\nabla \bov\|^2\|\nabla
 \d\|_{\mathbf{L}^\infty}^2\|\d\|_{\mathbf{L}^\infty}^2\non\\
 &\leq& \frac{\mu_4}{8}\|\Delta \bov\|^2+ C\|\mathcal{Q}\|^2+ C \|\nabla
 \bov\|^4 +C\|\nabla \bov\|^2.
 \end{eqnarray*}
 \be
 I_5 =  \int_{\T^2}\mathcal{Q}\d\cdot \Delta \bov dx\leq \|\Delta \bov\|\|\mathcal{Q}\|\|\nabla
 \varphi\|_{\mathbf{L}^\infty}\non\\
 \leq \frac{\mu_4}{8}\|\Delta \bov\|^2+C\|\mathcal{Q}\|^2.\non
 \ee
 It follows from \eqref{dnv} and the above estimates that
 \bea
 &&\frac{d}{dt}\|\nabla \bov\|^2+ \frac{\mu_4}{2}\|\Delta \bov\|^2+\mu_1\int_{\T^2} (d_id_j\nabla_l D_{ij})^2 dx
 +4\mu_5\int_{\T^2} (\nabla_l D_{ik}d_k)^2 dx\non\\
&\leq & C\|\mathcal{Q}\|^2+ C \|\nabla
 \bov\|^4 +C\|\nabla \bov\|^2, \quad t\geq t_1.\label{dnva}
 \eea
On the other hand, by equation \eqref{3b} and integration by parts,
we have
 \begin{eqnarray}
   \frac{1}{2}\frac{d}{dt}\left\|\mathcal{Q}(t)\right\|^2
   &=&-K\int_{\T^2} \mathcal{Q} \Delta^2\vp_tdx+ \int_{\T^2} \mathcal{Q} (\nabla \cdot f(\d))_tdx \non\\
 &=& -\lambda K\|\Delta \mathcal{Q}\|^2+ K\int_{\T^2} \Delta \mathcal{Q}\cdot \Delta(\bov\cdot \nabla
 \varphi) dx\non\\
 &&+\frac{1}{\epsilon^2}\int_{\T^2} \nabla \mathcal{Q} \cdot [(|\nabla
 \varphi|^2-1)\nabla(\bov\cdot \nabla\varphi)] dx-\frac{\lambda}{\epsilon^2}\int_{\T^2}\nabla \mathcal{Q} \cdot [(|\nabla
 \varphi|^2-1)\nabla \mathcal{Q}] dx\non\\
 && +\frac{2}{\epsilon^2}\int_{\T^2} \nabla
 \mathcal{Q} \cdot [(\nabla \varphi \cdot \nabla(\bov\cdot\nabla
 \varphi))\nabla\varphi] dx-\frac{2\lambda}{\epsilon^2}\int_{\T^2} \nabla
 \mathcal{Q} \cdot [(\nabla \varphi \cdot \nabla
 \mathcal{Q})\nabla\varphi] dx\non\\
 &:=& -\lambda K\|\Delta \mathcal{Q}\|^2+\sum_{k=1}^5 J_k.
 \label{dQt}
 \end{eqnarray}
  The terms $J_1, ..., J_5$ on the right hand side of \eqref{dQt} can be estimated as follows.
 \bea
 J_1&=& K\int_{\T^2} \Delta \mathcal{Q} \Delta \bov\cdot \nabla \varphi dx+ 2K\int_{\T^2} \Delta\mathcal{Q}
 \nabla_k v_i\nabla_k\nabla_i \varphi
 dx +K\int_{\T^2} \Delta \mathcal{Q}\!\  \bov\cdot \nabla \Delta \varphi dx\non\\
 &:=& J_{1a}+J_{1b}+J_{1c},\non
 \eea
 where by the uniform estimate \eqref{bdvp3}, Lemma \ref{eqes} and the Sobolev embedding theorem, we get
 \bea
 J_{1a}&\leq& K\|\nabla \varphi\|_{\mathbf{L}^\infty}\|\Delta \bov\|\|\Delta
 \mathcal{Q}\|\leq \frac{\mu_4}{16\alpha}\|\Delta \bov\|^2+ \frac{4\alpha K^2M^2}{\mu_4}\|\Delta
 \mathcal{Q}\|^2,\non\\
 J_{1b}&\leq& C\|\Delta \mathcal{Q}\|\|\nabla
 \bov\|_{\mathbf{L}^4}\|\varphi\|_{W^{2,4}}\leq C\|\Delta \mathcal{Q}\|\|\nabla
 \bov\|^\frac12\|\Delta \bov\|^\frac12 \non\\
  &\leq& \frac{\lambda K}{8} \|\Delta \mathcal{Q}\|^2+ \frac{\mu_4}{16\alpha}\|\Delta \bov\|^2+C\|\nabla \bov\|^2,\non\\
 J_{1c}&\leq&C\|\Delta \mathcal{Q}\|\|
 \bov\|_{\mathbf{L}^4}\|\varphi\|_{W^{3,4}}\leq C\|\Delta \mathcal{Q}\|\|\nabla \bov\|(\|\nabla \Delta \varphi\|_{\mathbf{L}^4}+1)
 \non\\
 &\leq& \frac{\lambda K}{8}\|\Delta \mathcal{Q}\|^2+C\|\nabla \bov\|^2(\|\mathcal{Q}\|+1)\non\\
 &\leq& \frac{\lambda K}{8}\|\Delta \mathcal{Q}\|^2+ C\|\mathcal{Q}\|^2+ C \|\nabla
 \bov\|^4 +C\|\nabla \bov\|^2.\non
 \eea
 Next,
 \bea
 J_2+J_4&\leq& C\|\nabla \mathcal{Q}\|(\|\nabla
 \varphi\|_{\mathbf{L}^\infty}^2+1)(\|\nabla \bov\|\|\nabla
 \varphi\|_{\mathbf{L}^\infty}+\|\bov\|_{\mathbf{L}^4}\|\varphi\|_{W^{2,4}})\non\\
 &\leq& C(\|\Delta
 \mathcal{Q}\|^\frac12\|\mathcal{Q}\|^\frac12+\|\mathcal{Q}\|)\|\nabla
 \bov\|\non\\
 &\leq& \frac{\lambda K}{8}\|\Delta \mathcal{Q}\|^2+C \|\mathcal{Q}\|^2+C\|\nabla
 \bov\|^2,\non\\
  J_3+J_5&\leq& C\|\nabla \mathcal{Q}\|^2(\|\nabla
 \varphi\|_{\mathbf{L}^\infty}^2+1)\leq \frac{\lambda K}{8}\|\Delta \mathcal{Q}\|^2+C
 \|\mathcal{Q}\|^2.\non
 \eea
 Inserting the above estimates into \eqref{dQt}, we obtain that
\be
   \frac{d}{dt}\left\|\mathcal{Q}(t)\right\|^2+\left(\lambda K-\frac{8\alpha K^2M^2}{\mu_4}\right)\|\Delta
 \mathcal{Q}\|^2-\frac{\mu_4}{4\alpha}\|\Delta \bov\|^2
   \leq C\|\mathcal{Q}\|^2+ C \|\nabla
 \bov\|^4 +C\|\nabla \bov\|^2.\label{dQta}
 \ee
Multiplying \eqref{dQta} by $\alpha$ and adding it to \eqref{dnva},
we have
 \bea
 &&\frac{d}{dt}\mathbf{A}(t)+ \frac{\mu_4}{4}\|\Delta \bov\|^2
  + \alpha\left(\lambda K-\frac{8\alpha K^2M^2}{\mu_4}\right)\|\Delta \mathcal{Q}\|^2\non\\
 &\leq&  C (1+\alpha)(\|\mathcal{Q}\|^2 +\|\nabla \bov\|^4+\|\nabla
 v\|^2), \quad \forall \ t\geq t_1.\label{2Dhigh}
 \eea
Taking
 \be
 \alpha =\frac{\lambda\mu_4}{16KM^2}, \label{alpha}
 \ee
 we conclude from \eqref{2Dhigh} that \eqref{A22da} holds. The lemma is proved.
\end{proof}

\begin{lemma} \label{comph}
Suppose $n=2$. For any $\bov_0\in  \dot H$, $\varphi_0\in H^2$, the
weak solution to \eqref{1b}--\eqref{3b} satisfies
 \begin{equation}
 \label{stima-rego1}
 \|(\bov,\vf)(t)\|_{\Phi_{1}}\,\le\, C(t_2),\quad \forall\  t\geq t_2>0,
\end{equation}
 where $t_2>0$ is arbitrary and $C(t_2)$ is a positive constant depending on
 $\|\bov_0\|$, $\|\vp_0\|_{H^2}$, $t_2$ and coefficients of the system.
 In particular, $\lim_{t_2\to 0^+}C(t_2)=+\infty$.
 If we assume in addition that $\bov_0\in V$ and $\vf_0\in H^4$,
 $\|(\bov,\vf)(t)\|_{\Phi_{1}}$ can be bounded by a constant depending on
  $\|(\bov_0,\vf_0)\|_{\Phi_{1}}$ uniformly in time.
 \end{lemma}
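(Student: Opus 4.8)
The plan is to upgrade the differential inequality \eqref{A22da} of Lemma \ref{A22d} --- recall $\mathbf{A}(t)=\|\nabla\bov(t)\|^2+\alpha\|\mathcal{Q}(t)\|^2$ --- into a uniform bound on $\mathbf{A}(t)$ by feeding its quadratic right-hand side $C(\mathbf{A}^2(t)+\mathbf{A}(t))$ into the uniform Gronwall lemma. The second ingredient is the global dissipative bound: combining \eqref{int} of Lemma \ref{low} with the definition of $\mathbf{A}$ and Lemma \ref{eqes}, one obtains $\int_0^{+\infty}\mathbf{A}(t)\,dt\le C_6$, where $C_6$ depends only on $\|\bov_0\|$, $\|\vf_0\|_{H^2}$ and the coefficients of the system. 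Once $\mathbf{A}(t)$ is under control, the $\Phi_1$-norm follows at once: the Poincar\'e inequality gives $\|\bov\|_V\le C\,\mathbf{A}(t)^{1/2}$, while Lemma \ref{eqes} together with the elliptic estimate $\|\vf\|_{H^4}\le C\|\Delta^2\vf\|$ for mean-zero functions on $\T^2$ gives $\|\vf\|_{H^4}\le C\|\mathcal{Q}\|+C\le C\,\mathbf{A}(t)^{1/2}+C$, which yields \eqref{stima-rego1}.

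For weak solutions (the case $t\ge t_2>0$) I would rewrite the right-hand side of \eqref{A22da} as $(C\mathbf{A}(t)+C)\mathbf{A}(t)=:g(t)\mathbf{A}(t)$ and drop the nonnegative dissipation terms on the left. On every interval $[s,s+r]\subset[t_1,+\infty)$ one has $\int_s^{s+r}g\,d\tau\le C\int_s^{s+r}\mathbf{A}\,d\tau+Cr\le C(C_6+r)$ and $\int_s^{s+r}\mathbf{A}\,d\tau\le C_6$, so the uniform Gronwall lemma \cite[Lemma III.1.1]{TE} yields $\mathbf{A}(s+r)\le (C_6/r)\exp(C(C_6+r))$ for all $s\ge t_1$. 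Choosing $t_1=t_2/2$ and $r=t_2/2$ gives $\mathbf{A}(t)\le C(t_2)$ for all $t\ge t_2$; here $C(t_2)\to+\infty$ as $t_2\to 0^+$, both through the factor $2/t_2$ and because the constant $C$ in \eqref{A22da} itself blows up as $t_1\to 0^+$. Converting to the $\Phi_1$-norm as above finishes this part.

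For the uniform-in-time estimate under the stronger hypotheses $\bov_0\in V$, $\vf_0\in H^4$ (hence $\vf_0\in H^3$), Lemma \ref{A22d} provides \eqref{A22da} on all of $[0,+\infty)$ with $C$ depending only on $\|\bov_0\|$ and $\|\vf_0\|_{H^3}$. Running the same uniform Gronwall argument with $t_1=0$ and a fixed $r>0$ bounds $\mathbf{A}$ on $[r,+\infty)$; on $[0,r]$ one notes that $\mathbf{A}(0)=\|\nabla\bov_0\|^2+\alpha\|\mathcal{Q}(0)\|^2<+\infty$ (since $\vf_0\in H^4$ implies $\mathcal{Q}(0)\in L^2$) and compares $\mathbf{A}$ with the scalar ODE $z'=C(z^2+z)$, $z(0)=\mathbf{A}(0)$, which stays finite on $[0,r]$ provided $r$ is chosen below the data-dependent blow-up time of $z$. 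Patching the two ranges and passing to the $\Phi_1$-norm gives a bound depending only on $\|(\bov_0,\vf_0)\|_{\Phi_1}$, uniformly in $t\ge 0$.

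The only genuine obstacle is the quadratic term $C\mathbf{A}^2$ on the right of \eqref{A22da}: a direct Gronwall estimate cannot absorb it. What makes the argument work is that the basic energy law forces $\mathbf{A}\in L^1(0,+\infty)$, so that $C\mathbf{A}$ may legitimately be regarded as a time-dependent coefficient with finite running integral, which is exactly the hypothesis of the uniform Gronwall lemma; the price for not assuming $V\times H^3$-regularity of the initial data is merely the short-time singularity $C(t_2)\sim 1/t_2$ as $t_2\to 0^+$.
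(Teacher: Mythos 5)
Your proposal is correct and follows essentially the same route as the paper: establish $\int\mathbf{A}\,dt<+\infty$ from the basic energy law together with Lemma \ref{eqes}, feed the quadratic right-hand side of \eqref{A22da} into the uniform Gronwall lemma by treating $C\mathbf{A}+C$ as a time-dependent coefficient with finite running integral, and then convert the bound on $\mathbf{A}$ into a $\Phi_1$-bound via Poincar\'e and the elliptic estimates of Lemma \ref{eqes}. Your treatment of the regular-data case (uniform Gronwall on $[r,\infty)$ plus ODE comparison on $[0,r]$) supplies details the paper leaves implicit, but it is the same argument.
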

 \begin{proof}
 By Corollary \ref{ninf}, Lemma \ref{low} and the definition of $\alpha$ (cf.  \eqref{alpha}),
 we infer that for arbitrary $t_1>0$,
 \be
 \int_{t_1}^{+\infty}\mathbf{A}(t)dt<+\infty.\label{finiA}
 \ee
 Since \eqref{A22da} holds for $t\geq t_1$, we can apply the uniform Gronwall lemma \cite[Lemma III.1.1]{TE} to
 get the following uniform estimate: for any $r>0$,
 \be
 \mathbf{A}(t+r)\leq C(t_1)\left(1+\frac1r\right), \quad \forall t\geq
 t_1,\non
 \ee
 where $C(t_1)$ is a positive constant depending on $\|\bov_0\|$, $\|\vp_0\|_{H^2}$, $t_1$. Since $t_1$ and $r$
 are arbitrary positive constants, we can prove the uniform estimate \eqref{stima-rego1} for any $t_2>0$.

 If the initial data is more regular, namely, $\bov_0\in V$ and $\vf_0\in H^4$, by Lemma \ref{comp}
 we can easily show that $\|(\bov,\vf)(t)\|_{\Phi_{1}}$ can be uniformly bounded by a constant depending on
  $\|(\bov_0,\vf_0)\|_{\Phi_{1}}$. The proof is complete.
 \end{proof}

 \bc\label{habs}
 Suppose $n=2$. For any $\bov_0\in  \dot H$, $\varphi_0\in H^2$,
 there exists $t^*>0$ depending on $\|\bov_0\|$, $\|\vp_0\|_{H^2}$,
 such that for all $t\geq t^*$, the
 weak solution to \eqref{1b}--\eqref{3b} satisfies
 \begin{equation}
 \label{hab}
 \|(\bov,\vf)(t)\|_{\Phi_{1}}\,\le\, M,\quad \forall\  t\geq t^*,
\end{equation}
where $M$ is independent of $\bov_0,\vp_0$.
 \ec
 \begin{proof}
 It follows from Lemma \ref{abs} that there exists $t_3$ depending
 on $\|\bov_0\|$, $\|\vp_0\|_{H^2}$, such that
 for all $t\geq t_3$, the
 weak solution to \eqref{1b}--\eqref{3b} satisfies
 \begin{equation}
 \label{lab}
 \|(\bov,\vf)(t)\|_{\Phi}\,\le\, M_1,\quad \forall\  t\geq t_3,
\end{equation}
where $M_1$ is independent of $\bov_0,\vp_0$. Now, Lemma \ref{low}
and Lemma \ref{eqes} imply  that for $t\geq t_3$,
 \be
  \sup_{t\geq t_3} \int_t^{t+1} (\|\nabla \bov(\tau)\|^2+\|\nabla \Delta
 \vp(\tau)\|^2)d\tau \leq C \int_{t_3}^\infty (\|\mathcal{Q}(\tau)\|^2+\|\nabla \bov(\tau)\|^2)+C\leq
 C,\non
 \ee
 with $C$ depending on $M_1$. Then \eqref{A2da} and the uniform Gronwall lemma yield that
 $ \|\vp(t+1)\|_{H^3}\leq C$, for $t\geq t_3$. As a consequence,
 $\|\nabla \vp(t)\|_{\mathbf{L}^\infty}\leq M_2$ for all $t\geq
 t_3+1$. For $t\geq t_3+1$, we fix $\alpha$ in \eqref{alpha}
 with $\alpha =\frac{\lambda\mu_4}{16KM_2^2}$. Then \eqref{A22da}
 holds with $C$ only depending on $M_1, M_2$. Applying the uniform
 Gronwall inequality once more, we have $\mathbf{A}(t)\leq M_3$, for
 $t\geq t_3+2$, where $M_3$ depends on $M_1,M_2$. Finally, taking $t^*=t_3+2$,
 we conclude the proof.
 \end{proof}

\noindent \textbf{Proof of Theorem \ref{existence}.} First, thanks
to Lemma \ref{comph}, we see that
  \be
 \mathbf{A}(t)\leq C(t), \quad \forall t\,>\,0,
 \label{AAes}
  \ee
 where $C(t)$ depends on $\|(\bov_0,\vf_0) \|_\Phi$ and $C(t)\nearrow +\infty$ for $t\searrow 0^+$ but
 remains uniformly bounded for $t\nearrow +\infty$.
Integrating \eqref{A22da} over $[t,t+1]$, recalling Lemma \ref{low},
we have
\begin{equation}
\label{stima-rego2} \int_{t}^{t+1}\left(\|\Delta \bov(s)\|^2 +
\|\vf(s)\|^2_{H^6}\right)ds \,\leq \,C(t), \quad \forall \,t\,>\,0.
\end{equation}
As a consequence of \eqref{AAes} and \eqref{stima-rego2}, we
immediately have
$$\int_{t}^{t+1} \left(\|\bov(s)\cdot\nabla \bov(s)\|^2 +\|\bov(s)\cdot \nabla\vf(s)\|^2_{H^2}\right) ds \,\le\,C(t), \quad \forall\, t>0.$$
On the other hand, it is easy to check that \eqref{AAes} and
\eqref{stima-rego2} gives an analogous $L^2(t,
t+1;\mathbf{L}^2(\T^2))$ estimate of $\nabla\cdot
(\tilde{\sigma}^d+\tilde{\sigma}^e)$ for any $t>0$. Hence, by direct
comparison with \eqref{1b}--\eqref{3b}, we can see that
 \begin{equation}
\label{rego-vt} \int_{t}^{t+1}\left(\|\partial_t \bov(s)\|^2 +
\|\partial_t \vf(s)\|^2_{H^2}\right) ds\,\le\, C(t),\quad\forall
\,t\,>\,0.\non
 \end{equation}
The $2D$ smoothing property is thus proved.

  Finally, similar to \cite{LL95}, we know that if the initial data
  are regular, the existence of a weak solution together with high-order estimates
implies a strong solution, and by Theorem \ref{uniqueness},
 the strong solution is actually unique.

\subsection{The global attractor and exponential attractors}

Lemma \ref{abs} and Corollary \ref{habs} entail that there exists a
compact absorbing set in $\Phi$. If we had uniqueness for the weak
solutions, this would be sufficient to prove the existence of the
global attractor by using the classical theory on dynamical systems
(see, e.g., \cite{TE}). We can overcome this difficulty essentially
relying on the regularization of weak solutions to strong solutions
for strictly positive times proved in Theorem \ref{reg}. This
implies that, for strictly positive times, we have enough regularity
to ensure uniqueness by Theorem \ref{uniqueness}. As a consequence,
we have the following weaker form of uniqueness, to which we refer
as \emph{unique continuation}:
 \bp \label{uc} Suppose $n=2$. For any two weak solutions
$(\bov_1,\vf_1)$ and $(\bov_2,\vf_2)$ such that\\
$(\bov_1(T),\vf_1(T))=(\bov_2(T),\vf_2(T))$ at some $T\,>\,0$, then
it holds $(\bov_1,\vf_1)\,\equiv\,(\bov_2,\vf_2)$ for any $t\,\ge\,
T$.
 \ep
  A possible way to construct the global attractor is to
apply the theory of $\ell$-trajectories introduced by  M\'alek and
Ne{\v{c}}as in \cite{MN} and later developed by M\'alek and
Pra\v{z}\'ak in \cite{MP} (For other possible approaches, the reader
is referred to, e.g., Ball \cite{ball97} or to Remark \ref{attr} in
this paper). Besides, we can also use the $\ell$-trajectory method
to study the existence of an exponential attractor.

 For the sake of convenience, we recall some highlight points of the  $\ell$-trajectory method here.
 Roughly speaking,
the $\ell$-trajectory method consists in lifting the dynamics from
the physical phase space to a space of trajectories with an
arbitrary but fixed length $\ell>0$. More precisely, for our current
problem, by $\ell$-trajectory we mean any solution to
\eqref{1b}-\eqref{3b} defined on the time interval $[0,\ell]$. Then,
we  endow the space of $\ell$-trajectories denoted by
$\mathscr{X}_\ell$ with the topology of $L^2(0,\ell;\Phi)$. Note
that weak solutions to \eqref{1b}--\eqref{3b} lie (at least) in
$$C_{w}([0,\ell];\Phi):=\{(\bov,\vp)\in L^\infty(0,\ell;\Phi):\
\langle (\bov,\vp), (\mathbf{u},\psi)\rangle_{\Phi, \Phi'}\in
C([0,\ell]),\ \forall\ (\mathbf{u},\psi)\in \Phi'\},$$ which makes
it reasonable to talk about the point values of trajectories.

The \emph{unique continuation} property implies that from an end
point of an $\ell$-trajectory there starts at most one solution.
Combined with the existence theorem, this implies that if
$(\mathbf{u},\phi)\in \mathscr{X}_\ell$ and $T > \ell$, then there
exists a unique $(\bov, \vp)$ which is a solution to
\eqref{1b}--\eqref{3b} on $[0,T]$ such that $(\mathbf{u},
\phi)=(\bov, \vp)|_{[0,\ell]}$. Then we can define the semigroup
$\mathscr{S}(t)$ on $\mathscr{X}_\ell$:
\begin{equation}
\label{translation}
(\mathscr{S}(t)(\mathbf{u},\phi))(\tau)\,:=\,(\bov(t+\tau),\vf(t +
\tau)),\quad \tau\in[0,\ell].
\end{equation}
 From now on, without loss of generality, we will fix
$\ell=1$. Corollary \ref{habs} implies that there exists $R\,>0$
such that
$$
B_1\,=\,\left\{ (\bov,\vf) \in \Phi_1 \,:\
\|(\bov,\vf)\|_{\Phi_1}\,\le\,R \right\}\subset \Phi_1\subset\subset
\Phi
$$
is a compact, absorbing set for the solution map $S(t)$. Theorem
\ref{uniqueness} entails that the solution operator $S(t)$ confined
on $\mathcal{B}_1$ is indeed a semigroup. Let
\begin{equation}
\label{B1}
\mathcal{B}_1\,:=\,\overline{\bigcup_{t\,\in\,[0,T_0]}S(t)B_1}^{\Phi_1}
\end{equation}
 where $T_0\,>\,0$ is a time such
that $S(t)B_1\,\subset\,B_1$ for all $t\ge T_0$ and the closure is
taken with respect to the weak topology of $\Phi_1$. Then $B_1$ is a
compact, absorbing and positive invariant set for $S(t)$. Define
\begin{equation}\label{Bl}
\mathscr{B}_1^{1}=\{(\mathbf{u}, \phi)\in \mathscr{X}_\ell:
(\mathbf{u}, \phi)(0)\in \mathcal{B}_1\}.
\end{equation}
Note that $\mathscr{B}_1^{1}$ is indeed closed with respect to the
topology of $L^2(0,1;\Phi)$. Using Corollary \ref{habs} and
Proposition \ref{uc}, one can verify that all the assumptions in
\cite[Theorem 2.1]{MP} are satisfied and as a result, the dynamical
system $(\mathscr{S}(t), \mathscr{X}_\ell)$ possesses the global
attractor $\mathbb{A}$. Next, we introduce the following map
evaluation map
 \be
 e:\,L^2(0,1;\Phi)\mapsto \Phi \quad \text{defined by}\
 e((\mathbf{u},\phi))\,=\,(\mathbf{u}(1),\phi(1)). \label{e}
 \ee
 Define $\mathbf{B}=e(\mathscr{B}_1^{1})$. We see that $\mathbf{B}\subset \Phi_1$,
 thus $S(t):\Phi\to \Phi$ is a semigroup on $\mathbf{B}$ and
 $\mathbf{B}$ is positively invariant.
  If we can show that the map $e$ is Lipschitz continuous on
 $\mathscr{B}_1^{1}$
 (which is
indeed true, see \eqref{elip} below), then we can project the global
attractor $\mathbb{A}$ back to the physical space $\Phi$ obtaining
the usual global attractor $\mathcal{A}=e(\mathbb{A})$ for the
dynamic system $(S(t), \mathbf{B})$. Since $\mathbf{B}$ is actually
absorbing, $\mathcal{A}$ is also a global attractor in the phase
space $\Phi$.

\br \label{attr} If one is interested only in the existence of the
global attractor, one can reason as follows, without invoking the
$\ell$-trajectory theory. First of all, combining Theorem
\ref{existence}, \ref{uniqueness} and \ref{reg}, we have that the
restriction of the  solution operator, named $\tilde{S}(t)$, to the
bounded sets of $\Phi_1$ is a semigroup. Moreover, Corollary
\ref{habs} give the dissipativity of $\tilde{S}(t)$ with respect to
the $\Phi_1$ metric. As a consequence, the standard theory of
dynamical systems gives the existence of the global attractor
$\mathcal{A}$ attracting the bounded sets of $\Phi_1$ but with
respect to the $\Phi$-topology. Finally, the smoothing property
implies that $\mathcal{A}$ is indeed the attractor for the weak
solutions, since it attracts also the $\Phi$-bounded sets.
 \er
 Our next step is to
prove the finite dimensionality (in terms of fractal dimension) of
the global attractor $\mathcal{A}$ constructed above and the
existence of an exponential attractor. As anticipated in the
introduction, the finite dimensionality of the global attractor will
be deduced as a consequence of the existence of a finer attracting
set, the exponential attractor. We recall the following (cf.
\cite{EFNT})
\begin{defin}\label{Def2.ea}
A compact subset $\mathcal{M}$ of the phase space $\Phi$ is called
an {\sl exponential attractor}\/ for the semigroup $S(t)$ if the
following conditions are satisfied: {\sl (E1)} The set $\mathcal{M}$
is {\it positively} invariant, i.e.,
 $S(t)\mathcal{M}\subset \mathcal{M}$ for all $t\ge 0$; {\sl (E2)} The fractal dimension (see, e.g., \cite{mane,TE})
 of $\mathcal{M}$ in $\Phi$ is finite; {\sl (E3)} The set $\mathcal{M}$ attracts exponentially
 fast the image of the bounded subsets of the phase space
 $\Phi$. Namely, there exist $C,\beta>0$
 such that
 \begin{equation}\label{expo-attractio}
   \dist_{\Phi}(S(t)B,\mathcal{M})\le C\,e^{-\beta t},\quad \forall \hbox{ bounded set} \ B\subset \Phi,
   \quad\forall t\ge 0.\\[1mm]
 \end{equation}
\end{defin}
Note that, by construction, the exponential attractor, when it
exists,  always contains the global attractor. Thus, property $(E2)$
gives that the global attractor has finite fractal dimension too.
Besides its importance in proving the finite dimensionality of the
global attractor, the existence of an exponential attractor is of
interest in itself. In fact, it may resolve some of the major
drawbacks of the global attractor, namely its arbitrary slow
attraction, which makes the global attractor very sensitive to
perturbation and to numerical approximation, and the difficulty in
estimating its rate of convergence. We refer the readers to the
recent survey \cite{MZsur} for more details and additional
references.

To prove the existence of an exponential attractor $\mathcal{M}$, we
first use the following existence theorem proposed in \cite{EMZ},
which gives an efficient strategy to obtain the existence of an
exponential attractor for the discrete semigroup generated by the
iterations of a proper map $\Bbb S$. Then in a second step, we
construct the desired exponential attractor for the semigroup with
continuous time.
\begin{lemma}
 {\rm (cf. \cite{EMZ})} \label{Lem2.abs}
Let $\mathscr{H}$ and $\mathscr{H}_1$ be two Banach spaces such that
$\mathscr{H}_1$ is compactly embedded into $\mathscr{H}$. Suppose
$\Bbb B_1$ is a bounded closed subset of $\mathscr{H}$. Let us give
a map $ \Bbb S: \Bbb B_1\to\Bbb B_1 $ such that
 \begin{equation}\label{2.sqw}
 \|\Bbb S b_1-\Bbb S b_2\|_{\mathscr{H}_1}\le L\|b_1-b_2\|_{\mathscr{H}},\ \ \forall\ b_1,b_2\in\Bbb B_1,
 \end{equation}
where the constant $L$ is independent of $b_1$ and $b_2$. Then, the
discrete semigroup $\{\Bbb S(n),\, n\in\Bbb N\}$ generated on $\Bbb
B_1$ by the iterations of the map \ $\Bbb S$ possesses an
exponential attractor, i.e., there exists a compact set
 $\mathcal{M}_d\subset\Bbb B_1$ such that
 (E1) $\mathcal{M}_d$ is positively invariant: $\Bbb S\mathcal{M}_d\subset\mathcal{M}_d$;
(E2) The fractal dimension of $\mathcal{M}_{d}$ in $\mathscr{H}$ is
finite: $\dim_f(\mathcal{M}_d,\mathscr{H})\le M<+\infty$;  (E3)
$\mathcal{M}_d$ attracts exponentially the images of $\Bbb B_1$
under the iterations of the map $\Bbb S$: $ \dist_{\mathscr{H}}(\Bbb
S(n)\Bbb B_1,\mathcal{M}_d)\le Ce^{-\kappa n}.$ Moreover, the
positive constants $M$, $C$ and $\kappa$ can be expressed explicitly
in terms of the {\rm squeezing constant} $L$, the size of the set
$\Bbb B_1$ and the entropy of the compact embedding
$\mathscr{H}_1\subset\subset \mathscr{H}$.
\end{lemma}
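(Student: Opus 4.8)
\textbf{Proof proposal} (the lemma is taken from \cite{EMZ}; below I only outline how one would prove it). The plan is to combine the smoothing estimate \eqref{2.sqw} with the compactness of the embedding $\mathscr{H}_1\subset\subset\mathscr{H}$ in an iterated covering (box-counting) scheme. The only quantitative input from compactness is the Kolmogorov entropy: for every $\epsilon>0$ the unit ball $B_{\mathscr{H}_1}(0,1)$ is covered by a finite number $N_\epsilon$ of balls of $\mathscr{H}$-radius $\epsilon$, and by homogeneity any $\mathscr{H}_1$-ball of radius $\rho$ is covered by $N_\epsilon$ balls of $\mathscr{H}$-radius $\epsilon\rho$. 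Fix $\epsilon=\tfrac{1}{4L}$ and write $N:=N_{1/(4L)}$, $R_0:={\rm diam}_{\mathscr{H}}\Bbb B_1$, $R_n:=2^{-n}R_0$.

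First I would run the covering recursion. Assume $\Bbb S(n)\Bbb B_1$ is covered by $k_n$ balls $B_{\mathscr{H}}(c^n_j,R_n)$ with centres $c^n_j\in\Bbb B_1$ (true for $n=0$, $k_0=1$). For each $j$, \eqref{2.sqw} gives
\[
\Bbb S\big(B_{\mathscr{H}}(c^n_j,R_n)\cap\Bbb S(n)\Bbb B_1\big)\subset B_{\mathscr{H}_1}(\Bbb S c^n_j,\,LR_n),
\]
which by the entropy bound is covered by $N$ balls of $\mathscr{H}$-radius $\tfrac{1}{4L}\cdot LR_n=\tfrac14R_n$; discarding the balls that miss $\Bbb S(n+1)\Bbb B_1$ and re-centring each remaining one at a point of $\Bbb S(n+1)\Bbb B_1\subset\Bbb B_1$ (which at most doubles the radius, to $R_{n+1}$) yields a cover of $\Bbb S(n+1)\Bbb B_1$ by $k_{n+1}\le Nk_n$ balls of radius $R_{n+1}$ centred in $\Bbb B_1$. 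Hence $\Bbb S(n)\Bbb B_1$ is covered by $N^n$ balls of radius $2^{-n}R_0$; writing $\mathcal{E}_n$ for the resulting family of centres, this already yields exponential attraction at rate $2^{-n}$ and, via the box-counting inequality, the a priori bound $M=\log_2 N$ for any invariant set caught between the $\mathcal{E}_n$'s and the global attractor.

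Second I would build $\mathcal{M}_d$. Choosing the nets $\mathcal{E}_n$ coherently along the orbit — arranging that $\Bbb S\mathcal{E}_n$ sits within $R_{n+1}$ of $\mathcal{E}_{n+1}$ — put $V:=\bigcup_{n\ge0}\mathcal{E}_n$ and
\[
\mathcal{M}_d:=\overline{\bigcup_{n\ge0}\Bbb S(n)V}^{\,\mathscr{H}}\subset\Bbb B_1.
\]
Then $\mathcal{E}_n\subset V\subset\mathcal{M}_d$ gives $\dist_{\mathscr{H}}(\Bbb S(n)\Bbb B_1,\mathcal{M}_d)\le R_n=2^{-n}R_0$, i.e.\ (E3) with $C=R_0$, $\kappa=\ln2$. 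Property (E1) holds because, composing \eqref{2.sqw} with the continuous embedding $\mathscr{H}_1\hookrightarrow\mathscr{H}$, the map $\Bbb S$ is $\mathscr{H}$-Lipschitz on $\Bbb B_1$ ($\|\Bbb Sb_1-\Bbb Sb_2\|_{\mathscr{H}}\le C_*L\|b_1-b_2\|_{\mathscr{H}}$), so it sends the pre-closure union into itself and extends continuously to its closure; compactness of $\mathcal{M}_d$ then follows once (E2) is known, since a totally bounded closed subset of $\mathscr{H}$ is compact. The coherent choice of nets is exactly what makes (E2) work: it forces the scale-$R_n$ covering number of $\mathcal{M}_d$ to stay bounded by $C_0N^n$, whence $\dim_f(\mathcal{M}_d,\mathscr{H})\le\log_2 N<\infty$, with all three constants $M,C,\kappa$ expressed through $L$, ${\rm diam}_{\mathscr{H}}\Bbb B_1$ and the entropy numbers $N_{1/(4L)}(B_{\mathscr{H}_1}(0,1),\mathscr{H})$, as claimed.

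The hard part is precisely this coherent bookkeeping in the second step: one must propagate a single, compatible family of finite nets along the whole orbit so that $\mathcal{M}_d$ is simultaneously positively invariant, exponentially attracting and of controlled scale-$R_n$ complexity $\sim N^n$, balancing the geometric growth $N^n$ of the number of balls against the geometric decay $2^{-n}$ of their radii. Everything else — the single-step covering estimate, the Lipschitz continuity of $\Bbb S$ in the weaker norm, and the box-counting inequality — is routine. In the paper one simply invokes \cite{EMZ}.
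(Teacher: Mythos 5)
The paper offers no proof of this lemma: it is quoted verbatim from \cite{EMZ} and simply invoked, so there is nothing to compare against line by line. Your outline is a correct reconstruction of the standard argument of \cite{EMZ}: the one--step covering recursion (smoothing into an $\mathscr{H}_1$-ball of radius $LR_n$, entropy refinement into $N$ balls of $\mathscr{H}$-radius $R_n/4$, re-centring to $R_{n+1}$), the definition of $\mathcal M_d$ as the closed union of the forward iterates of the nets, and the box-counting estimate giving $\dim_f\le\log_2 N$ all match the original construction. One small remark: with your definition $\mathcal M_d=\overline{\bigcup_{n}\Bbb S(n)V}$ the ``coherent bookkeeping'' you single out as the hard part is not actually needed, since for any choice of nets the tail pieces $\Bbb S(n)\mathcal E_m$ with $n+m>k$ all lie in $\Bbb S(k)\Bbb B_1$ and are therefore absorbed by the $N^k$ balls of radius $R_k$ already produced by the recursion, while the finitely many earlier generations contribute only $O(kN^k)$ extra centres; this is exactly how \cite{EMZ} closes the dimension estimate.
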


In order to apply Lemma \ref{Lem2.abs}, one has to properly define
the map $\Bbb{S}$, together with the spaces $\mathscr{H}$,
$\mathscr{H}_1$ and $\mathbb{B}_1$. A typical choice for dissipative
problems like \eqref{1b}--\eqref{3b}, would be (recall \eqref{B1})
$$\Bbb S\,:=\,S(1),\quad \mathscr{H}\,:=\,
\Phi,\quad \mathscr{H}_1\,:=\,\Phi_{1}, \quad \Bbb
B_1\,:=\,\mathcal{B}_1.
$$ Unfortunately, a closer inspection
to system \eqref{1b}--\eqref{3b} reveals that the above choice is
not completely satisfactory in the sense that proving a point-wise
(in time) estimate for the difference of two solutions in the norm
of $\Phi_{1}$ appears to be difficult due to the highly nonlinear
character of the problem. We overcome this difficulty by using the
method of $\ell$-trajectories
 to construct proper spaces $\mathscr{H}$ and
$\mathscr{H}_1$ and then verify the assumptions of Lemma
\ref{Lem2.abs}.
\par
As we did for the construction of the global attractor, we still set
$\ell=1$. Let us define $\Bbb B_1\,:=\,\mathscr{B}_1^{1}$ (recall
\eqref{Bl}) and
 \begin{equation}\label{2.spaces}
 \mathscr{H}:=L^2(0,1; \Phi),\ \ \
 \mathscr{H}_1:=L^2(0,1;\Phi_{1})\cap( W^{1,1}(0,1;V')\,\times\,H^{1}(0,1; L^2)).
 \end{equation} It follows from the Aubin--Lions compactness
lemma that the embedding $\mathscr{H}_1\subset \mathscr{H}$ is
compact. We will apply Lemma \ref{Lem2.abs} to the map $\Bbb
S=\mathscr{S}(1)$ (see \eqref{translation}) acting on the set $\Bbb
B_1$. To this end, we only need to check the smoothing property
\eqref{2.sqw}. All the results in this subsection holds only in the
two dimensional case. Moreover, we do not need any particular
restriction on the values of the structural constants in the
equations. Nevertheless, it would be quite interesting and important
to find an explicit (and possibly sharp) dependence of the fractal
dimension of the attractor with respect to the coefficients in the
equations.

 \begin{lemma}
\label{key-lemma} Suppose  $n\,=\,2$. Let
 $(\bov_1,\vf_1)$ and $(\bov_2,\vf_2)$ be
 two solutions to problem \eqref{1b}--\eqref{3b} with initial conditions in $\mathcal{B}_1$. Denote
 $\bv\,:=\,\bov_1-\bov_2$ and $\bvf\,:=\,\vf_1-\vf_2$. Then, the following estimate holds
 \be
 \label{key-estimate}
 \|\partial_t \bv\|_{L^{1}(1,2;V')}
 + \|\partial_t \bvf\|_{L^{2}(1,2; L^2)} + \|(\bv,\bvf) \|_{L^2(1,2;\Phi_{1})} \,\le\,C\|(\bv,\bvf )\|_{L^2(0,1; \Phi)}.
 \ee
 \end{lemma}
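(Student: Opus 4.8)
The plan is to derive a differential inequality for the difference $(\bv,\bvf)$ on an interval where both solutions are already strong, then integrate and absorb the low-order terms using the already-established uniform bounds on $\mathcal{B}_1$. First I would write down the system satisfied by $(\bv,\bvf)$: subtracting the two copies of \eqref{1b}--\eqref{3b} gives
\begin{align*}
&\partial_t\bv + \bov_1\cdot\nabla\bv + \bv\cdot\nabla\bov_2 - \tfrac{\mu_4}{2}\Delta\bv + \nabla \bar P = \nabla\cdot(\tilde\sigma^d_1-\tilde\sigma^d_2) + \nabla\cdot(\tilde\sigma^e_1-\tilde\sigma^e_2),\\
&\nabla\cdot\bv = 0,\\
&\partial_t\bvf + \bov_1\cdot\nabla\bvf + \bv\cdot\nabla\vf_2 = \lambda\bigl(-K\Delta^2\bvf + \nabla\cdot(f(\nabla\vf_1)-f(\nabla\vf_2))\bigr),
\end{align*}
and observe that $f(\nabla\vf_1)-f(\nabla\vf_2)$ is a polynomial expression in $\nabla\vf_1,\nabla\vf_2$ times $\nabla\bvf$, hence controlled in $L^2$ by $\|\bvf\|_{H^1}$ using the uniform $\mathbf{L}^\infty$-bound on $\nabla\vf_i$ from Corollary \ref{ninf} and Corollary \ref{habs}. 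Similarly each stress difference is a sum of terms each carrying at least one factor of $\bv$, $\nabla\bv$, $\bvf$ or $\nabla\bvf$ (and otherwise bounded quantities).

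Next I would test the $\bv$-equation with $\bv$ and the $\bvf$-equation with $\Delta^2\bvf$ (or with $\mathcal{Q}$-type quantities as in Lemma \ref{A22d}) and add. The top-order dissipative terms $\tfrac{\mu_4}{2}\|\nabla\bv\|^2$ and $\lambda K\|\nabla\Delta\bvf\|^2$ (more precisely $\|\Delta^2\bvf\|^2$) appear with good sign; every nonlinear term on the right is estimated by Gagliardo--Nirenberg, Young's inequality and the uniform higher-order bounds valid on $\mathcal{B}_1$ for $t\ge 0$, so that after absorbing small fractions of the dissipation one arrives at
\[
\frac{d}{dt}\Lambda(t) + c\,\|(\bv,\bvf)(t)\|_{\Phi_1}^2 \le C\,\Lambda(t),
\]
where $\Lambda(t)$ is equivalent to $\|(\bv,\bvf)(t)\|_{\Phi}^2$ and $c,C$ depend only on $M$ (the size of $\mathcal{B}_1$) and the coefficients. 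By Gronwall, $\Lambda(t)\le e^{Ct}\Lambda(0)$ for $t\in[0,2]$, so in particular $\sup_{t\in[0,2]}\|(\bv,\bvf)(t)\|_{\Phi}^2 \le C\|(\bv,\bvf)(0)\|_{\Phi}^2$. Here I should be careful about a technical point: the initial datum of the trajectory lies only in $\Phi$, so strictly I would first establish the estimate $\sup_{t\in[s,2]}\|(\bv,\bvf)(t)\|_\Phi^2\le C\|(\bv,\bvf)(s)\|_\Phi^2$ for $s>0$ and then integrate in $s$ over $(0,1)$ (using $\|(\bv,\bvf)(0)\|_{L^2(0,1;\Phi)}$ on the right), which is exactly why the target interval in \eqref{key-estimate} is $(1,2)$ and the source interval is $(0,1)$.

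Then integrating the differential inequality over $[1,2]$ and using the $L^\infty(0,2;\Phi)$ bound just obtained gives $\int_1^2\|(\bv,\bvf)(t)\|_{\Phi_1}^2\,dt\le C\|(\bv,\bvf)\|_{L^2(0,1;\Phi)}^2$, which is the $\mathscr{H}_1$-part of \eqref{key-estimate} coming from $L^2(0,1;\Phi_1)$. Finally, to get the time-derivative bounds I would use the equations directly: $\partial_t\bvf = -\bov_1\cdot\nabla\bvf - \bv\cdot\nabla\vf_2 + \lambda(-K\Delta^2\bvf+\nabla\cdot(f(\nabla\vf_1)-f(\nabla\vf_2)))$, and each term is controlled in $L^2(1,2;L^2)$ by the $L^2(1,2;\Phi_1)$-norm of $(\bv,\bvf)$ together with the uniform bounds, giving $\|\partial_t\bvf\|_{L^2(1,2;L^2)}\le C\|(\bv,\bvf)\|_{L^2(0,1;\Phi)}$; for $\partial_t\bv$, testing the $\bv$-equation against $\mathbf{w}\in V$ and using that $\bv\cdot\nabla\bov_2$, $\bov_1\cdot\nabla\bv$ and the stress differences are all in $L^2(1,2;\mathbf{L}^2)\hookrightarrow L^1(1,2;V')$ with norm bounded by the right-hand side, yields the $\|\partial_t\bv\|_{L^1(1,2;V')}$ estimate. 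The main obstacle is the bookkeeping in the second step: making sure that every nonlinear term arising from the stress-tensor differences $\nabla\cdot(\tilde\sigma^d_1-\tilde\sigma^d_2)$ and from $\nabla\cdot\tilde\sigma^e$ (written in the convenient form of \eqref{1b1}, i.e.\ $(K\Delta^2\vf-\nabla\cdot f(\nabla\vf))\nabla\vf$) can be split so that the factor carrying the highest derivative of the difference is absorbed into the dissipation, while the remaining factors are controlled by the uniform $H^3$-bound on $\vf_i$ and the $V$-bound on $\bov_i$ on $\mathcal{B}_1$; this is where the two-dimensional Gagliardo--Nirenberg interpolation and the choice of the auxiliary quantity $\mathcal{Q}$ (as in Lemma \ref{A22d}) are essential.
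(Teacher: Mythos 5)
Your proposal is correct and follows essentially the same route as the paper: energy estimates for the difference equations (testing with $\bv$ and $\Delta^2\bvf$), absorption of the nonlinear stress differences using the uniform $\Phi_1$-bounds on $\mathcal{B}_1$, Gronwall, and then the key averaging of the resulting pointwise estimate over the initial time $s\in(0,1)$ to produce the $L^2(0,1;\Phi)$ norm on the right, with a duality argument for $\partial_t\bv$ in $L^1(1,2;V')$. The only cosmetic difference is that the paper obtains $\|\partial_t\bvf\|^2_{L^2(1,2;L^2)}$ directly in the dissipation by additionally testing the $\bvf$-equation with $\bvf+\partial_t\bvf$, whereas you recover it by comparison from the equation using the $L^2(1,2;H^4)$ control of $\bvf$; both work.
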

\begin{proof}
We know from  Lemma \ref{comph}  that for $i\,=\,1,2$
 \be
 \|(\bov_i,
\vp_i)(t)\|_{\Phi_1}\leq C, \quad \forall\ t\geq 0.\label{uie}
 \ee Then we test the
equation for $\bv$ with $\bv$ and the equation for $\bvf$ with
$\Delta^2\bvf$, respectively. We obtain
 \bea
 && \frac{1}{2}\frac{d}{dt}(\|\bv \|^2 + \|\Delta \bvf\|^2) +\frac{\mu_4}{2}\|\Delta \bv\|^2+ \lambda K\| \Delta^2 \bvf\|^2\non\\
 &=& -\int_{\T^2}(\bv \cdot \nabla) \bov_2 \cdot \bv\,dx
 - \int_{\T^2} (\tilde{\sigma}^d_1-\tilde{\sigma}^d_2)\cdot \nabla\bv \,dx
  - \int_{\T^2} (\tilde{\sigma}^e_1-\tilde{\sigma}^e_2)\cdot \nabla\bv \,dx\nonumber\\
 &&- \int_{\T^2}(\bv \nabla\vf_1 + \bov_2\nabla\bvf)\Delta^2\bvf \,dx + \lambda \int_{\T^2}\nabla\cdot
 (f(\d_1)-f(\d_2))\Delta^2\bvf\,dx := \sum_{i=1}^5K_i. \label{diff-sol}
 \eea
 Using estimate \eqref{uie}, it is not difficult to see that
 \bea
 K_1&\le&\|\bv\|^2_{\mathbf{L}^4}\| \nabla
\bov_2\|\,\le\,\epsi\|\nabla\bv\|^2 + C(\epsi)\|\bv\|^2,\label{I1}\\
K_4&\leq& \|\bv\|\|
\nabla\vf_1\|_{\mathbf{L}^\infty}\|\Delta^2\bvf\|+
\|\bov_2\|_{\mathbf{L}^4}\|\nabla\bvf\|_{\mathbf{L}^4}\|
\Delta^2\bvf\|\non\\
&\leq& \epsi\| \Delta^2 \bvf\|^2+ C(\epsi)(\|\bv\|^2 + \|
\bvf\|^2_{H^2}),\label{K4}\\
 K_5&\leq& \|\nabla\cdot(f(\d_1)-f(\d_2))\|\| \Delta^2 \bvf\|\non\\
 &\leq& C[(\|\vp_1\|_{L^\infty}^2+1)\|\Delta \bar\vp\|+\|\Delta
\vp_2\|_{L^6}\|\nabla \vp_1+\nabla \vp_2\|_{\mathbf{L}^6}\|\nabla
\bar\vp\|_{\mathbf{L}^6}]\| \Delta^2 \bvf\|\non\\
&\leq&  \epsi\| \Delta^2 \bvf\|^2+ C(\epsi)\|
\bvf\|^2_{H^2}.\label{K5}
 \eea
 To estimate $K_2$, we need to control
$\|\tilde{\sigma}^d_1-\tilde{\sigma}^d_2\|$.
 \bea
\tilde{\sigma}^d_1-\tilde{\sigma}^d_2&=&
\mu_1[(\d_1^{\top}D(\bov_1)\d_1)\d_1\otimes \d_1
-(\d_2^{\top}D(\bov_2)\d_2)\d_2\otimes \d_2]\non\\
&&
+\mu_5(D(\bov_1)\d_1\otimes\d_1-D(\bov_2)\d_2\otimes\d_2)+\mu_5(\d_1\otimes
 D(\bov_1)\d_1- \d_2\otimes
 D(\bov_2)\d_2) \non\\
&:=& J_1+J_2+J_3.\non
 \eea
 We give a detailed $L^2$-estimate only
for the terms $J_1$ in the above decomposition, since for the other
two (lower-order) terms $J_2, J_3$, the argument is essentially the
same and actually simpler. We have
\begin{eqnarray}
 J_1 &:=& (\d_1^{\top}D(\bov_1)\d_1)\d_1\otimes \d_1
-(\d_2^{\top}D(\bov_2)\d_2)\d_2\otimes \d_2
 = \d_1^{\top}D(\bv)\d_1 \d_1\otimes\d_1\nonumber\\
  &&+  (\d_1^{\top}-\d_2^{\top})D(\bov_2)\d_1\d_1\otimes\d_1
  + \d_2^{\top}D(\bov_2)\Big(\d_1\d_1\otimes\d_1 - \d_2\d_2\otimes\d_2\Big)\nonumber\\
 & = & J_{1a} + J_{1b} + J_{1c}.\label{J1}
 \end{eqnarray}
The term $J_{1a}$ multiplied with $\nabla\bv$ produces a nonnegative
(hence negligible in the estimate) term since
\begin{eqnarray*}
J_{1a}:\nabla\bv &= & J_{1a}:D(\bv)\,=\,(\d_1^{\top}D(\bv)\d_1 \d_1\otimes\d_1):D(\bv)\nonumber\\
&=& \d_1^{\top}(D(\bv))\d_1 ( \d_1\otimes\d_1):D(\bv))\,=\, \vert
\d_1^{\top}D(\bv)\d_1 \vert^2\,\ge\,0.
\end{eqnarray*}
Then using Sobolev embedding theorem and \eqref{uie}, we have
 \bea
 \| J_{1b}\|&\le& \|\d_1^{\top}-\d_2^{\top}\|_{\mathbf{L}^\infty}\|D(\bov_2)\|
 \|\d_1(\d_1\otimes\d_1)\|_{\mathbf{L}^\infty}
 \le C\|\bvf\|_{H^3}\leq C\|\bvf\|_{H^4}^\frac12\|\bvf\|_{H^2}^\frac12,
 \label{J12}\\
 \|J_{1c}\|&\leq
&\|\d_2\|_{\mathbf{L}^\infty}\|D(\bov_2)\|(\|\bar
\d\d_1\otimes\d_1\|_{\mathbf{L}^\infty}+\|\d_2\bar
\d\otimes\d_1\|_{\mathbf{L}^\infty}
+ \|\d_1\d_2\otimes\bar\d\|_{\mathbf{L}^\infty}) \nonumber\\
&\le & C\|\bvf\|_{H^3}\leq
C\|\bvf\|_{H^4}^\frac12\|\bvf\|_{H^2}^\frac12. \label{J121}
 \eea
As a consequence,
 \be
 K_2\leq \|\nabla
 \bar\bov\|\|\tilde{\sigma}^d_1-\tilde{\sigma}^d_2\|\leq \varepsilon\|\nabla
 \bar\bov\|^2+ \varepsilon \|\bvf\|^2_{H^4}+C\|\bvf\|_{H^2}^2.\non
 \ee
Concerning $K_3$, we have
 \bea
 \tilde{\sigma}^e_1-\tilde{\sigma}^e_2 &=&K[\nabla(\nabla \cdot\d_1)\otimes\d_1-\nabla(\nabla \cdot\d_2)\otimes\d_2]-K[(\nabla
\cdot\d_1)\nabla \d_1-(\nabla \cdot\d_2)\d_2]\non\\
&& -[f(\d_1)\otimes\d_1-f(\d_2)\otimes\d_2]:=J_4+J_5+J_6.\non
 \eea
 By the Sobolev embedding and \eqref{uie}, we obtain
 \bea
J_4  & = & \|\nabla(\nabla\cdot\d_1)\otimes \d_1 -
\nabla(\nabla\cdot\d_2)\otimes \d_2 \|\le
 \|\bar\d\|_{\mathbf{H}^2}\| \d_1 \|_{\mathbf{L}^\infty} + \|\d_2\|_{\mathbf{H}^2}\|\bar\d \|_{\mathbf{L}^\infty}\nonumber\\
&\le & C\| \bvf\|_{H^3}\leq
C\|\bar\vp\|_{H^4}^\frac12\|\bar\vp\|_{H^2}^\frac12,\non \\
 J_5  & = & \|(\nabla\cdot\d_1)\otimes \nabla \d_1 - (\nabla\cdot\d_2)\otimes \nabla\d_2 \|\le
 \|\bar\d\|_{\mathbf{H}^1}\| \nabla\d_1 \|_{\mathbf{L}^\infty} + \|\d_2\|_{\mathbf{H}^1}\|\nabla \bar\d \|_{\mathbf{L}^\infty}\nonumber\\
&\le & C\| \bvf\|_{H^2}+C\| \bvf\|_{H^3}\leq
C\|\bar\vp\|_{H^4}^\frac12\|\bar\vp\|_{H^2}^\frac12,\non\\
J_6&\leq& C\|\bar\vp\|_{H^2},\non
 \eea
which imply that
 \be K_3\leq \|\nabla
 \bar\bov\|\|\bar\vp\|_{H^4}^\frac12\|\bar\vp\|_{H^2}^\frac12\leq \varepsilon\|\nabla
 \bar\bov\|^2+
 \varepsilon\|\bar\vp\|_{H^4}^2+C\|\bar\vp\|_{H^2}^2.\non
 \ee
Now we test the equation for $\bvf$ by $\bvf+\bar\vp_t$. Similar
computations give
\begin{equation}
\label{diff-sol2} \frac{d}{dt}(\| \bvf\|^2 +\| \Delta\bvf\|^2)+ \|
\bvf_t\|^2+\| \bvf\|^2_{ H^2}\, \le\, C(\|\bv\|^2 + \|\bvf
\|^2_{H^2}).
\end{equation}
Summing \eqref{diff-sol} with \eqref{diff-sol2}, choosing  $\epsi$
sufficiently small, we obtain
 \be
 \frac{d}{dt}(\|\bv \|^2 +  2\| \Delta \bvf\|^2+ \| \bvf\|^2) + \|
\nabla \bv \|^2 + \|\partial_t\bvf \|^2 + \| \bvf\|^2_{H^4}  \le
C(\|\bv \|^2 + \| \bvf\|^2_{H^2}).\label{diff-sol4}
 \ee
  By the Gronwall Lemma, for any $0\,\le\,y-t\,\le\,2$,
 \be
 \| \bv(y) \|^2 + \| \bvf(y)\|^2_{H^2} \le
Ce^{2C}\left( \|\bv (t)\|^2 + \| \bvf(t)\|^2_{ H^2} \right).
\label{diff-sol5}
 \ee Taking $t\,\in\,[0,1]$ and integrating
\eqref{diff-sol4} over $[t,2]$, we infer from \eqref{diff-sol5} that
 \bea
&& \|(\bv,\bvf)(2)\|^{2}_{\Phi} + \int_{t}^{2}(\|(\bv,\bvf)(r)\|^{2}_{\Phi_{1}}+\|\partial_t \bvf(r)\|^2) dr \non\\
&\le & C\left(\|(\bv,\bvf)(t)\|^{2}_{\Phi} + \int_{t}^2
\|(\bv,\bvf)(r)\|^{2}_{\Phi}dr\right)\le C\|
(\bv,\bvf)(t)\|^2_{\Phi}.  \label{diff-sol6}
 \eea
 Integrating \eqref{diff-sol6} with respect to $t$
over $[0,1]$, we finally obtain
 \bea
\|(\bv,\bvf)\|^{2}_{L^2(1,2;\Phi_1)} + \|\partial_t\bvf
\|^2_{L^2(1,2; L^2)} &\le & \int_0^1\int_t^2 \|(\bv,\bvf)(r)\|^{2}_{\Phi_{1}}+\|\partial_t \bvf(r)\|^2) drdt  \non\\
&\leq&
 C\|(\bv,\bvf) \|^2_{L^2(0,1;\Phi)}. \label{diff-sol7}
 \eea
It remains to estimate $\|\partial_t\bv\|_{L^{1}(1,2;V')}$. We use a
duality argument. First, we recall that, for $u\in L^1(1,2;V')$,
$$ \| u\|_{L^1(1,2;V')} = \sup_{\varphi}\Big\vert\int_{1}^{2}\;\langle u,\varphi\rangle dr\Big\vert,$$
where the $\sup$ is taken over the function  $\phi \in L^\infty(1,2;
V)$ such that $\|\phi\|_{L^\infty(1,2; V)} = 1$ and the duality
pairing is  between $V'$ and $V$. Consequently, thanks to
\eqref{uie} and \eqref{diff-sol7}, there holds
\begin{eqnarray*}
\int_{1}^2\|\partial_t\bv(r) \|_{V'}dr
&\le&\int_{1}^2\|\nabla\bv(r)\|dr + \int_{1}^2(\|\bv (r)\|\|\nabla \bov_1(r)\| + \|\nabla\bv(r)\|\|\bov_2(r)\|)dr\nonumber\\
&& + \int_{1}^2(\|\tilde{\sigma}^d_1(r)-\tilde{\sigma}^d_2(r)\|+\|\tilde{\sigma}^e_1(r)-\tilde{\sigma}^e_2(r)\|)dr\non\\
&\le& C\left(\int_{0}^1\|(\bv,\bvf)(r)\|^2_{\Phi}dr\right)^\frac12.
\end{eqnarray*}
The proof is complete.
\end{proof}
Thanks to Lemma \ref{key-lemma}, we have verified that the map $\Bbb
S:=\mathscr{S}(1)$ satisfies all of the assumptions of the abstract
result Lemma \ref{Lem2.abs}. Therefore, the discrete semigroup
 $\{\mathscr{ S}(n),\ n\in\Bbb N\}$ possesses
an exponential attractor $\Bbb{M}_d$ in the trajectory space $\Bbb
B_1$ endowed with the topology of $\mathscr{H}=L^2(0,1;\Phi)$.

Multiplying \eqref{diff-sol4} with $t$ and using the Gronwall lemma,
we easily obtain that
\begin{equation}
\| (\bv,\bvf)(1)\|^2_{\Phi} \,\le\,C\int_{0}^1\|(\bv,\bvf)(r) \|^2
_{\Phi}dr, \label{elip}
\end{equation}
which means that the map $e$ (cf. \eqref{e} for the definition) is
Lipschitz continuous on $\Bbb B_1$.
 This yields
that projecting $\mathbb{M}_d$ back to the phase space $\Phi$ via
 \begin{equation}
 \label{2.prexp}
 \mathcal{M}_d:=e(\mathbb{M}_d)=\mathbb{M}_d\big|_{t=1},
 \end{equation}
the resulting $\mathcal{M}_d$ is indeed the exponential attractor
for the discrete semigroup $\{S(n),\,n\in\Bbb N\}$ acting on
$\mathbf{B}=e(\Bbb B_1)$ (endowed with the topology of $\Phi$).

We note that for all the trajectories $(\bov,\vf)$ starting from
$\mathcal{B}_1$, there holds
 $$\int_{0}^{1}\|\partial_t \bov(s)\|^2 + \|\partial_t \vf(s)\|^2_{H^2} ds\,\le\,C.$$
This, together with \eqref{diff-sol5} imply that the map
$(t,(\bov_0,\vf_0))\,\mapsto\,S(t)((\bov_0,\vf_0))$ is Lipschitz
continuous on $[0,1]\,\times\,\mathcal{B}_1$ with respect to the
$\mathbb{R}\,\times\Phi$ metric. Thus, the desired exponential
attractor $\mathcal{M}$ with continuous time (and on the whole phase
space since $\mathcal{B}_1$ is absorbing) is given by the standard
expression (see \cite{EMZ1} and \cite{MP} for further information)
$$ \mathcal{M}\,:=\,\bigcup_{t\in[0,1]}S(t)\mathcal{M}_d. $$
The proof of Theorem \ref{2datt} is complete.

\section{Convergence to Equilibrium in $2D$}
\label{single-traj-anal}
 \setcounter{equation}{0}
  Theorem \ref{energy1} indicates that the total energy $\mathcal{E}(t)$ (cf.
  \eqref{E}) is decreasing with respect to time, consequently, it serves as a global Lyapunov functional for
system \eqref{1b}--\eqref{3b}. The $\omega$-limit set of
$(\bov_0,\vp_0) \in \dot H\times \dot H^2$ is defined as follows:
 \bea
 \omega(\bov_0,\vp_0)& =  & \{(\bov_\infty, \vp_\infty)\in  \dot H\times \dot H^2:\ \exists
 \  \{t_i\}_{i=1}^\infty\nearrow +\infty,\ \text{such that} \non\\
 &&\ (\bov(t_i),\vp(t_i))\to (\bov_\infty, \vp_\infty)\  \text{in}\ \dot H\times H^2\}.
 \eea
It follows from Lemma \ref{comph} and the well-known result on
dynamical systems (cf. \cite[Lemma I.1.1]{TE}) that
 \bl
 $\omega(\bov_0, \vp_0)$ is a non-empty bounded connected
subset in $\dot V\times \dot H^4$. Furthermore, (i) it is invariant
under the nonlinear semigroup $S(t)$. (ii) $\mathcal{E}$ is constant
on $\omega(\bov_0, \vp_0)$. (iii) $\omega(\bov_0, \vp_0)$
 consists of steady states of system \eqref{1b}--\eqref{3b}.
 \el
We note that the energy inequality obtained in Lemma \ref{A22d} not
only yields uniform higher-order energy estimates of weak solutions
(cf. \eqref{stima-rego1}), but also indicates that the asymptotic
limit points of weak solutions to problem \eqref{1b}--\eqref{3b}
actually have a special form.
 \bl
 \label{vcon}
 For $\bov_0\in \dot H$, $\varphi_0\in \dot H^2$, the weak solutions
to \eqref{1b}--\eqref{3b} have the following property
 \be
 \lim_{t\rightarrow +\infty} (\|\bov(t)\|_{\mathbf{H}^1}+ \|\mathcal{Q}(t)\|)=0.
 \label{vcon1}
 \ee
 \el
 \begin{proof}
 We recall that for any $t_1>0$, \eqref{finiA} holds. Using Lemma \ref{A22d}
 and \cite[Lemma 6.2.1]{Z04}, we conclude that $\lim_{t\rightarrow +\infty} \mathbf{A}(t)=0$,
  which together with Corollary \ref{ninf} and the Poincar\'e inequality leads to our conclusion.
 \end{proof}
 Lemma \ref{vcon} implies that for any initial data $\bov_0\in \dot H$, $\varphi_0\in \dot H^2$,
 their corresponding asymptotic limit points $(\bov_\infty, \vp_\infty)$  satisfy the following stationary
 problem (using the form \eqref{1b1}):
 \bea
 &&-\nabla P_\infty=\bov_\infty=0,\label{s1a}\\
 &&-K\Delta^2\vp_\infty+\nabla\cdot
f(\nabla \vp_\infty)=0,\label{s2a}
\\
&&\int_{\T^2}\vp_\infty dx=0.\label{s3a}
  \eea
 \bl\label{stal}
 When $n=2,3$, for any $\vp_0\in L^1$, problem
 \be
 -K\Delta^2\vp+\nabla\cdot
f(\nabla \vp)=0,\quad \int_{\T^n}\vp dx=\int_{\T^n}\vp_0dx
\label{se}
  \ee admits at least one weak
solution $\phi$, which is in fact smooth. If $\epsilon$ is properly
large, then the weak solution is unique.
 \el
 \begin{proof}
 It is easy to verify that the energy  $E(\vp)=\frac{K}{2}\|\Delta \vp\|^2+\int_{\T^2}F(\nabla
   \vp)dx$ admits at least one minimizer $\phi$ in $H^2\cap \{\vp\in L^1, \int_{\T^n}\vp dx=\int_{\T^n}\vp_0dx\}$, which is
   a weak solution to problem \eqref{se}.
    Moreover, for any weak solution $\phi$ to \eqref{se}, we
   have
    $$K\|\Delta\phi\|^2+\frac{1}{\epsilon^2}\int_{\T^n}|\nabla\phi|^4dx=\frac{1}{\epsilon^2}\|\nabla
   \phi\|^2\leq \frac{1}{2\epsilon^2}\int_{\T^n}|\nabla\phi|^4dx+ \frac{1}{2\epsilon^2}|\T^n|, $$
   which together with the Poincar\'e inequality $\|\phi\|\leq C_P (\|\nabla \phi\|+ |\int_{\T^n} \phi dx|)$
   implies that $\|\phi\|_{H^2}$ can be bounded by a constant
   depending on $|\int_{\T^n} \vp_0 dx|$, $\epsilon$, $C_P$ and
   $|\T^n|$. A bootstrap argument yields that $\phi$ is actually
   smooth and for $m\in \mathbb{N}$, $\|\phi\|_{H^m}$ can be bounded by a constant
   depending on $|\int_{\T^n} \vp_0 dx|$, $\epsilon$, $C_P$ and
   $|\T^n|$. Finally, let $\phi_1$ and $\phi_2$ be two solutions of
   \eqref{se}, using the fact
   that $\int_{\T^n} (f(\nabla \phi_1)-f(\nabla \phi_2))\cdot \nabla (\phi_1-\phi_2)dx \geq 0$, then we
   infer from the Poincar\'e inequality that
   $   K\|\Delta(\phi_1-\phi_2)\|^2\leq \frac{1}{\epsilon^2}\|\nabla
   (\phi_1-\phi_2)\|^2\leq \frac{C_P^2}{\epsilon^2}\|\Delta
   (\phi_1-\phi_2)\|^2$. As a result, if $\epsilon>C_PK^{-\frac12}$,
   then $\phi_1=\phi_2$. (We refer to \cite{LLW04}
   for a similar problem but with different boundary conditions)
 \end{proof}

\subsection{Convergence to equilibrium}

Lemma \ref{vcon} yields the convergence of velocity field $\bov$. In
what follows, we study the convergence for $\vp$. First,
$\epsilon>C_PK^{-\frac12}$, we infer from Lemma \ref{stal} that
$\omega(\bov_0, \vp_0)$ consists of a single point $(0, \vp_\infty)$
where $\vp_\infty$ is the unique solution to
\eqref{s2a}--\eqref{s3a}. However, if $\epsilon\leq
C_PK^{-\frac12}$, we lose the uniqueness of steady states.
Alternatively, we shall use the \L ojasiewicz--Simon approach.
Denote $A=-\Delta$ with $D(A)=\{\phi\in H^2, \int_{\T^n} \phi
dx=0\}$. Then $A$ is self-adjoint and positive definite. Let $H_A$
be the dual space of $H^1_*=\{\phi\in H^1,\int_{\T^n} \phi dx=0 \}$.
Then the norm on $H_A$ is given by $\|\phi\|_A^2=\int_{\T^n}\phi
A^{-1}\phi dx =\|A^{-\frac12}\phi\|^2$.

We introduce the following \L ojasiewicz--Simon type inequality:
   \bl
   \label{ls}
   Suppose $n=2,3$. Let $\psi$ be the critical point of energy
   \be
   E(\vp)=\frac{K}{2}\|\Delta \vp\|^2+\int_{\T^n}F(\nabla
   \vp)dx.\label{Evp}
   \ee
   Then, there exist constants $\beta>0$, $\theta\in(0, \frac12)$
   depending on $\psi$ such that for any $\vp\in H^3$
   with $\|\vp-\psi\|_{H^2}<\beta$ and $\int_{\T^n}\vp
   dx=\int_{\T^n}\psi dx$, there holds
   \be
   \|-K\Delta^2 \vp+\nabla \cdot f(\nabla \vp)\|_A \geq
   |E(\vp)-E(\psi)|^{1-\theta}.
   \label{LoSi}
   \ee
   \el
   \begin{proof}
   Slightly modifying the arguments in \cite{RH98,HJ99},
   we can easily prove that there exist constants $\beta_1>0$, $\theta\in(0, \frac12)$
   depending on $\psi$ such that for any $\vp\in H^3$
   with $\|\vp-\psi\|_{H^3}<\beta_1$ and $\int_{\T^n}\vp
   dx=\int_{\T^n}\psi dx$, \eqref{LoSi} holds.   Next, we slightly relax
   the smallness condition and show that \eqref{LoSi} still holds if one only
   requires that $\vp$ falls into a certain $H^2$-neighborhood of $\psi$.
  For any $\vp\in H^3$ satisfying $\int_{\T^n}\vp
   dx=\int_{\T^n}\psi dx$, using the regularity theory for elliptic
   problem, we can see that
   \be \|\vp-\psi\|_{H^3}\leq M\|\Delta^2 (\vp-
   \psi)\|_A,
   \label{55}
   \ee
where $M$ is a constant independent of $\vp$. On the other hand, if
$\|\vp-\psi\|_{H^2}\leq 1$ (which implies that $ \|\vp\|_{H^2}\leq
\|\psi\|_{H^2}+1$), then by Sobolev embedding theorem, we get
 \bea
 && \|\nabla \cdot f(\nabla \vp)-\nabla \cdot f(\nabla
\psi)\|_A\leq C_1\|\vp-\psi\|_{H^2},\non\\
 &&
 |E(\vp)-E(\psi)|^{1-\theta}\leq
 C_2\|\vp-\psi\|^{1-\theta}_{H^2},\non
 \eea
 where $C_1, C_2$ depend on $\|\psi\|_{H^2}$ and $\|\vp\|_{H^2}$ (by our assumption,
 the later one can be bounded by using only $\|\psi\|_{H^2}$). As a consequence, there
 exists a (sufficiently small) $\beta\in (0,1]$ independent of $\vp$, such that if
 $\|\vp-\psi\|_{H^2}<\beta$, then
 \be  \|\nabla \cdot f(\nabla \vp)-\nabla \cdot f(\nabla
\psi)\|_A+ |E(\vp)-E(\psi)|^{1-\theta}< \frac{\beta_1K}{2M}.
  \label{551}
  \ee
 Now for any $\vp\in H^3$ satisfying $\int_{\T^n}\vp
   dx=\int_{\T^n}\psi dx$ and  $\|\vp-\psi\|_{H^2}<\beta$, there are only two possibilities:
(i) If $\|\vp-\psi\|_{H^3}<\beta_1$, then \eqref{LoSi} holds. (ii)
If $\|\vp-\psi\|_{H^3}\geq \beta_1$,  noticing that $\psi$ satisfies
\eqref{s3a}, we deduce from (\ref{55}) and (\ref{551}) that
\begin{eqnarray}
 \| -K\Delta^2 \vp+\nabla \cdot f(\nabla \vp) \|_A
& = &\|-K\Delta^2 (\vp-\psi)+\nabla \cdot f(\nabla \vp)-\nabla \cdot
f(\nabla \psi) \|_A\nonumber\\
&\geq &K\| \Delta^2 (\vp-\psi)\|_A-\|\nabla \cdot f(\nabla
\vp)-\nabla \cdot
f(\nabla \psi) \|_A\nonumber\\
&\geq & \frac{K}{M}\|\vp-\psi\|_{H^3}-\|\nabla \cdot f(\nabla
\vp)-\nabla \cdot
f(\nabla \psi) \|_A\nonumber\\
& > & \frac{\beta_1K}{2M}> |E(\vp)-E(\psi)|^{1-\theta}.\non
\end{eqnarray}
The proof is complete.
   \end{proof}

For any initial data $(\bov_0,\vp_0)\in \dot H\times \dot H^2$, it
follows from Lemma \ref{comph} that $ \|\vp\|_{H^4}$ is uniformly
bounded for $t\geq t_2>0$. Therefore, there is an increasing
unbounded sequence $\{t_i\}_{i\in\mathbb{N}}$ and a function
$\vp_\infty\in H^4$ satisfying \eqref{s2a}--\eqref{s3a} such that
   \be
   \lim_{t_i\rightarrow +\infty} \|\vp(t_i)-\vp_\infty\|_{H^3}
   =0, \quad  \lim_{t_i\rightarrow +\infty} \mathcal{E}(t_i)=E(\vp_\infty).\label{secon}
   \ee
By \eqref{3b}, we have
   \be
   \|\vp_t\|\leq \|\bov\cdot\nabla \vp\|+\|\mathcal{Q}\|\leq
   \|\nabla \bov\|\|\vp\|_{H^2}+ \|\mathcal{Q}\|.\label{dt}
   \ee
We first exclude the trivial case, i.e., that there exists a $t_0>0$
such that $\mathcal{E}(t_0)=E(\vp_\infty)$. In this case, for all
$t\geq t_0$, we deduce from (\ref{energy}) that $\|\nabla \bov(t)\|=
\|\mathcal{Q}(t)\|= 0$. It follows from \eqref{dt} that for $t\geq
t_0$, $\|\vp_t\| =0$. Namely, $\vp$ is independent of time for all
$t\geq t_0$. Due to \eqref{secon}, we conclude that $\vp(t)\equiv
\vp_\infty$ for $t\geq t_0$. In this case, there is nothing else to
prove.

Therefore, without loss of generality, for all $t>0$, we suppose
that $\mathcal{E}(t)>E(\vp_\infty)$. For arbitrary $t>0$, we know
that $\vp\in L^2(t,t+1;H^4)\cap H^1(t, t+1; L^2)$ which implies that
$\vp\in C([t,t+1], H^2)$. Due to this continuity, by a standard
contradiction argument (see \cite{J981}), we can prove that there is
a (sufficiently large) $t_0>0$ such that for all $t\geq t_0$,
$\|\vp(t)-\vp_\infty\|_{H^2}<\beta$. Namely, for all $t\geq t_0$,
$\vp(t)$ satisfies the conditions in Lemma \ref{ls}. Apply Lemma
\ref{ls}, \eqref{energy} and the Poincar\'e inequality, we obtain
 \bea
&&-\frac{d}{dt}(\mathcal{E}(t)-E(\vp_\infty))^\theta =
-\theta(\mathcal{E}(t)-E(\vp_\infty))^{\theta-1}\frac{d}{dt}\mathcal{E}(t)
\geq \theta \frac{\frac{\mu_4}{2}\|\nabla \bov\|^2+\lambda
\left\|\mathcal{Q}\right\|^2}{\|\bov\|^{2(1-\theta)}+
\|\mathcal{Q}\|_A}\non\\
&\geq &  C(\|\nabla \bov\|+ \|\mathcal{Q}\|),
 \quad \forall\  t\geq t_0,\label{inta}
 \eea
which implies that
 $ \int_{t_0}^{+\infty} (\|\nabla \bov(\tau)\|+
 \|\mathcal{Q}(\tau)\|)d\tau<+\infty, $
 and by \eqref{dt}, $\int_{t_0}^{+\infty} \|\vp_t(\tau)\|
 d\tau<+\infty$. This easily yields the convergence of $\vp(t)$ in $L^2$ as $t\rightarrow +\infty$. Since $\vp$ is
compact in $H^3$, we infer from \eqref{secon}  that
 $\lim_{t\rightarrow +\infty}
 \|\vp(t)-\vp_\infty\|_{H^3}=0$.
By the Sobolev embedding theorem, we have
 \be K\|\Delta^2\vp(t)- \Delta^2 \vp_\infty\|
 \leq \| \mathcal{Q}(t) \|+ \|\nabla \cdot f(\nabla \vp(t))-\nabla \cdot f(\nabla \vp_\infty)\|
 \leq \|\mathcal{Q}(t)\|+C\|\vp(t)-\vp_\infty\|_{H^2},\label{kkk}
 \ee
 where $C$ depends on  $\|\vp(t)\|_{H^3}$ and $\|\vp_\infty\|_{H^3}$. As
 a consequence, we can conclude from \eqref{vcon1} and the $H^3$-convergence of $\vp$ that
  \be \lim_{t\rightarrow +\infty}
 \|\vp(t)-\vp_\infty\|_{H^4}=0.\label{conh2}
 \ee

 \subsection{Convergence rate}
  It remains to prove the convergence rate.
  This can be done in two steps: the first consists in obtaining, via the \L
ojasiewicz--Simon inequality (cf. e.g., \cite{HJ01}), the
convergence rate for the lower order terms. In the second step, we
will use the energy method to obtain the convergence rate for the
higher order terms. From Lemma \ref{ls} and \eqref{inta}, we have
 \bea
 \frac{d}{dt}(\mathcal{E}(t)-E(\vp_\infty))+ C(\mathcal{E}(t)-E(\vp_\infty))^{2(1-\theta)}\leq 0, \quad \forall\ t\geq
 t_0,\label{ly3}
 \eea
and as a consequence,
 \be \mathcal{E}(t)-E(\vp_\infty)\leq
 C(1+t)^{-\frac{1}{1-2\theta}},\quad \forall\ t\geq
 t_0.\non
 \ee
  Integrating \eqref{inta} on
$(t,+\infty)$, where $t\geq t_0$,  it follows from \eqref{dt} that
 \be \int_t^{+\infty} \|\vp_t(\tau)\| d\tau \leq C\int_t^{+\infty}(\|\nabla \bov(\tau)\|+
 \|\mathcal{Q}(\tau)\|)d\tau\leq
 C(1+t)^{-\frac{\theta}{1-2\theta}}, \label{rate1}
 \ee
which implies
 \be
    \|\vp(t)-\vp_\infty\|\leq C(1+t)^{-\frac{\theta}{1-2\theta}}, \quad t\geq t_0.\label{rate1a}
 \ee
 It follows from the basic energy law \eqref{energy} and \eqref{s3a} that
 \be
 \frac{d}{dt}y(t)+ \frac{\mu_4}{2}\|\nabla \bov\|^2+ \lambda
 \|\mathcal{Q}\|^2\leq 0,\label{dyt}
 \ee
where
 \be
 y(t)=\frac12\|\bov(t)\|^2+\frac{K}{2}\|\Delta \vp(t)-\Delta \vp_\infty\|^2+\int_{\T^2}[F(\nabla \vp(t))
 -F(\nabla \vp_\infty)+\nabla \cdot f(\nabla \vp_\infty)( \vp(t)-\vp_\infty)]
 dx.\non
 \ee
 A direct calculation yields
 \bea
 && \int_{\T^2}[F(\nabla \vp)
 -F(\nabla \vp_\infty)+\nabla \cdot f(\nabla \vp_\infty)( \vp-\vp_\infty)]
 dx\non\\
 &=& \frac{1}{4\epsilon^2}\int_{\T^2}(\d-\d_\infty)
 \cdot\left[(|\d|^2+|\d_\infty|^2+\d\cdot\d_\infty)(\d-\d_\infty)\right]dx\non\\
 &&+ \frac{1}{4\epsilon^2}\int_{\T^2}(\d-\d_\infty)
 \cdot\left[|\d_\infty|^2(\d-\d_\infty)+
 \d_\infty(\d+\d_\infty)\cdot(\d-\d_\infty)\right]dx-\frac{1}{2\epsilon^2}\|\d -\d_\infty\|^2.\non
 \eea
 Thus, we have
 \be \left|\int_{\T^2}[F(\nabla \vp)
 -F(\nabla \vp_\infty)+\nabla \cdot f(\nabla \vp_\infty)( \vp-\vp_\infty)]
 dx\right|\leq C\|\nabla
 \vp
 -\nabla \vp_\infty\|^2,\non
 \ee
 which together with the Poincar\'e inequality implies
 \be
 y(t)\geq  \frac12\|\bov\|^2+\frac{K}{4}\|\Delta \vp-\Delta
 \vp_\infty\|^2-C\|\vp-\vp_\infty\|^2.\label{y1}
 \ee
 On the other hand, it follows from \eqref{K5} and \eqref{kkk} that
 \bea
 K\|\Delta^2\vp- \Delta^2 \vp_\infty\|
 &\leq& \|\mathcal{Q}\|+C\|\vp-\vp_\infty\|_{H^2}\leq \|\mathcal{Q}\|+C\|\Delta^2 \vp-\Delta
 ^2\vp_\infty\|^\frac12\|\vp-\vp_\infty\|^\frac12\non\\
 &\leq& \|\mathcal{Q}\|+\frac{K}{2}\|\Delta^2 \vp-\Delta
 ^2\vp_\infty\|+C\|\vp-\vp_\infty\|,\label{y2a}
 \eea
 which yields
 \bea
 y(t)&\leq& \frac12\|\bov\|^2+\frac{K}{2}\|\Delta \vp-\Delta
 \vp_\infty\|^2+C\|\nabla \vp-\nabla \vp_\infty\|^2\non\\
 &\leq& C\|\nabla \bov\|^2+ C\|\Delta^2 \vp-\Delta^2
 \vp_\infty\|\|\vp-\vp_\infty\|+  C\|\Delta^2 \vp-\Delta^2
 \vp_\infty\|^\frac12\|\vp-\vp_\infty\|^\frac32\non\\
 &\leq& C\|\nabla \bov\|^2+
 C\|\mathcal{Q}\|^2+C\|\vp-\vp_\infty\|^2.\label{y2}
 \eea
 For $t\geq t_0>0$, Lemma \ref{comph} implies that
 $\mathbf{A}(t)\leq C$ that combined with \eqref{A22da} yields
 \be
 \frac{d}{dt}\mathbf{A}(t)\leq C\mathbf{A}(t).\label{y3}
 \ee
 It follows from \eqref{rate1a}, \eqref{dyt} and \eqref{y1}--\eqref{y3} that there
 exist constants $M_1, M_2>0$ such that
 \be \frac{d}{dt}[y(t)+M_1 \mathbf{A}(t)]+ M_2[y(t)+M_1
 \mathbf{A}(t)]\leq C\|\vp(t)-\vp_\infty\|^2\leq  C(1+t)^{-\frac{2\theta}{1-2\theta}}, \quad \forall \ t\geq t_0.\label{dy2}
 \ee
 By a similar argument as in \cite{WGZ1}, we conclude from
 \eqref{dy2} that
 \bea y(t)+M_1 \mathbf{A}(t)
 &\leq& [y(t_0)+M_1 \mathbf{A}(t_0)]
e^{\gamma (t_0-t)}+Ce^{-M_2 t}\int_{t_0}^t e^{M_2\tau}
(1+\tau )^{-\frac{2\theta}{1-2\theta}}d\tau\non\\
&\leq& C(1+t)^{-\frac{2\theta}{1-2\theta}},\quad \forall\ t\geq
t_0.\label{rate2}
 \eea
 Finally, from \eqref{rate1a}, \eqref{y2a} and \eqref{rate2} we
 obtain the required estimate
 \be
 \|\bov(t)\|_{\mathbf{H}^1}+\|\vp(t)-\vp_\infty\|_{H^4}\leq C (1+t)^{-\frac{\theta}{1-2\theta}},\quad \forall t\geq
t_0.\label{rate3}
 \ee

\section{Results in $3D$}
 \setcounter{equation}{0}
  \begin{lemma}\label{hA3}
 Suppose $n=3$. We have
 \be \frac{d}{dt}\mathbf{A}(t)+\frac{\mu_4}{4}\|\Delta \bov\|^2
 +\frac{\alpha\lambda K}{2}\|\Delta \mathcal{Q}\|^2\leq C_*(\mathbf{A}^3(t)+\mathbf{A}(t)), \quad \forall\ t\geq t_1>0,
 \label{A3da}
 \ee
 where $t_1>0$ is arbitrary and $\alpha>0$, $C_*>0$ are constants depending on $\|\bov_0\|$,
 $\|\varphi_0\|_{H^2}$ and $t_1$. Moreover,
 if we assume that $\varphi_0\in
 H^3$, \eqref{A3da} holds for $t\geq 0$ with $C_*$ being dependent of $\|\bov_0\|$,
 $\|\varphi_0\|_{H^3}$.
 \end{lemma}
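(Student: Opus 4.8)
The plan is to run verbatim the argument used for Lemma \ref{A22d}, replacing the two‑dimensional Ladyzhenskaya/Agmon inequalities by their three‑dimensional Gagliardo--Nirenberg counterparts; it is precisely this replacement that upgrades the quadratic right‑hand side $\mathbf{A}^2$ of \eqref{A22da} to the cubic one $\mathbf{A}^3$. First I fix an arbitrary $t_1>0$. Since Lemma \ref{comp} and Corollary \ref{ninf} hold for $n=2,3$, we have the uniform bound $\|\vp(t)\|_{H^3}+\|\nabla\vp(t)\|_{\mathbf{L}^\infty}\le M$ for all $t\ge t_1$; hence also $\|\d(t)\|_{\mathbf{L}^\infty}\le M$ (since $\d=\nabla\vp\in\mathbf{H}^2\hookrightarrow\mathbf{L}^\infty$ in $3D$) and $\|\nabla\d(t)\|_{\mathbf{L}^6}\le M$ (since $\nabla\d=D^2\vp\in\mathbf{H}^1\hookrightarrow\mathbf{L}^6$). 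If $\vp_0\in H^3$ these bounds hold from $t=0$ by the second part of Lemma \ref{comp}, which yields the final assertion of the lemma. Throughout I would freely use Lemma \ref{eqes} to trade $\|\nabla\Delta\vp\|$, $\|\Delta^2\vp\|$, $\|\nabla\Delta^2\vp\|$ and $\|\Delta^3\vp\|$ against $\|\mathcal{Q}\|^{1/2}$, $\|\mathcal{Q}\|$, $\|\nabla\mathcal{Q}\|$ and $\|\Delta\mathcal{Q}\|$ up to additive constants.

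For the velocity part I test the reformulated equation \eqref{1b1} with $-\Delta\bov$, obtaining, exactly as in \eqref{dnv}, an identity of the form $\tfrac12\tfrac{d}{dt}\|\nabla\bov\|^2+\tfrac{\mu_4}{2}\|\Delta\bov\|^2+(\text{nonnegative }\mu_1,\mu_5\text{ terms})=\int_{\T^3}(\bov\cdot\nabla)\bov\cdot\Delta\bov\,dx+\sum_{k=1}^5 I_k$, with $I_1,I_2$ from the $\mu_1$ stress, $I_3,I_4$ from the $\mu_5$ stress and $I_5=\int_{\T^3}\mathcal{Q}\,\d\cdot\Delta\bov\,dx$. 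The convective term is the source of the cubic growth: the correct three‑dimensional estimate is $\int_{\T^3}(\bov\cdot\nabla)\bov\cdot\Delta\bov\le\|\bov\|_{\mathbf{L}^6}\|\nabla\bov\|_{\mathbf{L}^3}\|\Delta\bov\|\le C\|\nabla\bov\|^{3/2}\|\Delta\bov\|^{3/2}\le\epsi\|\Delta\bov\|^2+C\|\nabla\bov\|^6$, where one uses $\|\bov\|_{\mathbf{L}^6}\le C\|\nabla\bov\|$ and $\|\nabla\bov\|_{\mathbf{L}^3}\le C\|\nabla\bov\|^{1/2}\|\Delta\bov\|^{1/2}$ together with the zero‑mean property of $\bov$. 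The terms $I_1,\dots,I_4$ are handled as in Lemma \ref{A22d}, but routing every spatial derivative of $\d$ through $\|\nabla\d\|_{\mathbf{L}^6}\le M$ and $\|\Delta\d\|=\|\nabla\Delta\vp\|\le C\|\mathcal{Q}\|^{1/2}+C$, and the velocity factors through the $3D$ bounds $\|\nabla\bov\|_{\mathbf{L}^4}^2\le C\|\nabla\bov\|^{1/2}\|\Delta\bov\|^{3/2}$ and $\|\nabla\bov\|_{\mathbf{L}^3}^2\le C\|\nabla\bov\|\|\Delta\bov\|$; after Young's inequality each of them is $\le\epsi(\|\Delta\bov\|^2+\|\Delta\mathcal{Q}\|^2)+C(\mathbf{A}^2+\mathbf{A})$, while $I_5\le\epsi\|\Delta\bov\|^2+CM^2\|\mathcal{Q}\|^2\le\epsi\|\Delta\bov\|^2+C\mathbf{A}$ exactly as in the planar case. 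This produces the $3D$ analogue of \eqref{dnva}.

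For the $\mathcal{Q}$-part I differentiate $\mathcal{Q}=-K\Delta^2\vp+\nabla\cdot f(\d)$ in time, use $\vp_t=\lambda\mathcal{Q}-\bov\cdot\nabla\vp$ and integrate by parts to get, as in \eqref{dQt}, $\tfrac12\tfrac{d}{dt}\|\mathcal{Q}\|^2=-\lambda K\|\Delta\mathcal{Q}\|^2+\sum_{k=1}^5 J_k$, where the $J_k$ are bilinear/trilinear in $\Delta\mathcal{Q}$ (or $\nabla\mathcal{Q}$), $\Delta\bov$, $\nabla\bov$ and derivatives of $\vp$ up to third order. Each $J_k$ is estimated with $\|\vp\|_{H^3},\|\nabla\vp\|_{\mathbf{L}^\infty}\le M$, Lemma \ref{eqes} and the $3D$ interpolation inequalities; the crucial observation (as in the $2D$ case) is that every such term carries a genuine factor $\|\nabla\bov\|$ or $\|\mathcal{Q}\|$, so that no pure constant survives and after Young one obtains $\tfrac{\mu_4}{4\alpha}\|\Delta\bov\|^2+\epsi\|\Delta\mathcal{Q}\|^2+C(\mathbf{A}^3+\mathbf{A})$, the $3D$ analogue of \eqref{dQta}. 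Multiplying this $\mathcal{Q}$-inequality by $\alpha$, adding it to the velocity inequality, choosing $\alpha=\lambda\mu_4/(16KM^2)$ as in \eqref{alpha} (so that the $\|\Delta\mathcal{Q}\|^2$ coefficient stays positive and $\alpha\cdot\tfrac{\mu_4}{4\alpha}\|\Delta\bov\|^2=\tfrac{\mu_4}{4}\|\Delta\bov\|^2$ is absorbed into $\tfrac{\mu_4}{2}\|\Delta\bov\|^2$), and collecting the dissipation, one arrives at \eqref{A3da} with $C_*$ (and $\alpha$) depending on $M$, hence on $\|\bov_0\|$, $\|\vp_0\|_{H^2}$ and $t_1$; the $\vp_0\in H^3$ version follows by using the uniform‑in‑time bounds of Lemma \ref{comp}.

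The hard part is the bookkeeping of the powers of $\mathbf{A}$ in $3D$. One must (i) split the Navier--Stokes convective term in the form $\mathbf{L}^6\times\mathbf{L}^3\times\mathbf{L}^2$ and not $\mathbf{L}^4\times\mathbf{L}^4\times\mathbf{L}^2$ (the latter would yield $\mathbf{A}^4$), and (ii) handle all the terms involving $\nabla\d$ through the uniformly bounded norm $\|\nabla\d\|_{\mathbf{L}^6}$ rather than $\|\nabla\d\|_{\mathbf{L}^\infty}$, which is \emph{not} controlled by the uniform $H^3$-bound on $\vp$ in three dimensions. Once these choices are made, every nonlinear contribution is bounded by $\epsi(\|\Delta\bov\|^2+\|\Delta\mathcal{Q}\|^2)+C(\mathbf{A}^3+\mathbf{A})$ (using also $\mathbf{A}^2\le\mathbf{A}^3+\mathbf{A}$ and $\mathbf{A}^{7/4}\le\mathbf{A}^3+\mathbf{A}$), and the argument closes exactly as in the $2D$ proof.
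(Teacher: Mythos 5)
Your proposal is correct and follows exactly the route the paper takes: its proof of this lemma is literally the one-sentence instruction to rerun Lemma \ref{A22d} with the $3D$ embedding/interpolation inequalities and the uniform bounds from Lemma \ref{comp} and Corollary \ref{ninf}, which is what you carry out, including the correct identification of the convective term (estimated in $\mathbf{L}^6\times\mathbf{L}^3\times\mathbf{L}^2$, as the paper itself does in Lemma \ref{l3d}) as the source of the cubic power $\mathbf{A}^3$, and the same choice of $\alpha$. The only cosmetic difference is that you route $\nabla\d$ through $\|\nabla\d\|_{\mathbf{L}^6}\le M$ rather than interpolating $\|\nabla\d\|_{\mathbf{L}^\infty}$ against $\|\mathcal{Q}\|$ (both close the estimate within $\mathbf{A}^3+\mathbf{A}$), so this is not a genuine departure.
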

 \begin{proof}
 Using Lemma \ref{comp} and Corollary \ref{ninf}, we modify the
calculations in Lemma
  \ref{A22d} by using the $3D$ version of embedding theorems.
  It is not difficult to see that we are still able to choose $\alpha>0$ sufficiently small in $\mathbf{A}(t)$ such
  that our conclusion holds true.
 \end{proof}
 The existence of local strong solution for arbitrary viscosity $\mu_4>0$ is a direct consequence of
 Lemma \ref{hA3}.
 \bt
 \label{gloloc3d}
Suppose $n=3$. For any $(\mathbf{v}_0, \phi_0)\in V\times H^4$,
problem \eqref{1b}--\eqref{3b} admits a unique local strong
solution.
 \et
 \begin{proof}
  Since $\vp_0\in H^4$, we have uniform estimates for $\|\vp\|_{H^3}$ and
 $\|\nabla \vp\|_{\mathbf{L}^\infty}$.  \eqref{A3da} is valid for $t\geq 0$.
 Considering the ODE problem:
 \be \frac{d}{dt}Y(t)=C_*[(Y(t))^3+Y(t)],\quad
 Y(0)=\mathbf{A}(0),\label{ODE}
 \ee
 we denote by $I=[0,T_{max})$ the interval of existence of the maximal solution $Y(t)$. We thus have $ \lim_{t\rightarrow T_{max}^-} Y(t)=+\infty. $
 It easily follows that for any $t\in I$, $0\leq
\mathbf{A}(t)\leq Y(t)$.
 Consequently, $\mathbf{A}(t)$ exists on $I$.  This and Theorem \ref{uniqueness} imply the local
existence of a unique strong solution of problem
\eqref{1b}--\eqref{3b}.
 \end{proof}

 \bp\label{sm}
Suppose $n=3$, $\bov_0\in \dot V$, $\vp_0\in H^4$. For any $R\in (0,
\infty)$, whenever
 $\|\nabla \bov_0\|^2+\|\mathcal{Q}(0)\|^2\leq R$, there is a small constant
$\varepsilon_0\in (0,1)$ depending only on $R$ and coefficients of
the system such that either (i) Problem \eqref{1b}--\eqref{3b} has a
unique global strong
 solution $(\bov,\vp)$, or (ii) there is a $T_1\in (0, +\infty)$ such that
 $\mathcal{E}(T_1)<\mathcal{E}(0)-\varepsilon_0$.
\ep
 \begin{proof}
 The proof follows from the argument in \cite{LL95} for simplified nematic liquid crystal model.
  A statement was also given for the Smectic-A system with variable density in \cite{Liu00}
 without proof. For the convenience of the readers, we sketch
 the proof here. We suppose that $(\bov, \vp)$ is a weak solution
 with initial data $(\bov_0,\vp_0)$ such that $\|\nabla \bov_0\|^2+\|\mathcal{Q}(0)\|^2\leq
 R$. Then $C_*$ in \eqref{A3da} is determined by $R$.
 Moreover, due to Corollary \ref{ninf} we have uniform estimate
 on $\|\nabla \vp(t)\|_{\mathbf{L}^\infty}$
  for all $t\geq 0$ which only depends on $R$. We fix through $R$ the constant $\alpha$ in the definition of
 $\mathbf{A}(t)$.
 Consider the ODE problem
 \eqref{ODE} with
 $Y(0)=\max\{1,\alpha\}R\geq \mathbf{A}(0)$. Let $Y(t)$ denote the unique maximal solution defined on $[0,T_{max})$.
 The time $T_{max}$ is determined by $Y(0)$ and $C_*$ in such a way
 that it
is increasing when $Y(0)$ is decreasing.
  Now we take
  $$t_0=\frac12 T_{max}(Y(0), C_*),\quad \varepsilon_0=\frac{R t_0}{2}\min\left\{\frac{\mu_4}{2},
  \lambda\right\}.$$
  If (ii) is not true, we have $ \mathcal{E}(t) \geq \mathcal{E}(0)-\varepsilon_0$ for all $t\geq0$.
From the basic energy law \eqref{energy}, we infer that
 \be
\int_{\frac{t_0}{2}}^{t_0}\mathbf{A}(t)dt\leq
 \int_0^{\infty}\mathbf{A}(t)dt \leq \kappa\varepsilon_0,\quad
 \text{with}\ \kappa=\max\{1,\alpha\}\max\{2\mu_4^{-1}, \lambda^{-1}\}.\non
 \ee
 Hence, there exists a $t_*
\in [\frac{t_0}{2}, t_0]$ such that $\mathbf{A}(t^*)\leq
\frac{2\kappa\varepsilon_0}{t_0}\leq Y(0)$. Restarting the flow
\eqref{ODE} from $t_*$, we infer from the above argument that
$\mathbf{A}(t)$ remains bounded at least on $[0,
\frac{3t_0}{2}]\subset [0,t_*+t_0]$ with the same bound as that on
$[0,t_0]$. As a consequence, an iteration argument shows that
$\mathbf{A}(t)$ is bounded for all $t\geq 0$. The proof is complete.
 \end{proof}
As an immediate consequences of the above result, we can prove (1)
eventual regularization of weak
 solutions and (2) the
well-posedness of strong solutions near the \emph{absolute}
minimizers of energy $E$ (cf. \eqref{Evp}) (cf. \cite{LL95,Liu00}).
 \bc\label{3deab}
 Suppose $n=3$.

 (1) Let $(\mathbf{v},\vp)$ be the weak solution to problem \eqref{1b}--\eqref{3b} on $[0,+\infty)$.
 Then there is some $T^*>0$ such that
 $ \mathbf{v}\in L^\infty(T^*, \infty; V)\cap L^2_{loc}(T^*,\infty; \mathbf{H}^2)$, $ \vp\in
 L^\infty(T^*,\infty; H^4)\cap L^2_{loc}(T^*,\infty; H^6)$.

(2) Let $\vp^*\in H^2$ be an absolute minimizer of
 $E(\vp)$
 in the sense that $E(\vp^*) \leq E (\vp)$ for all $\vp\in H^2$.
 For any $\bov_0\in \dot V$, $\vp_0\in H^4$ satisfying $\|\bov_0\|_{\mathbf{H}^1}\leq 1$ and  $\|\vp_0-\vp^*\|_{H^4}\leq 1$,
 there is a constant $\sigma$ which may depend
 on coefficients of the system and $\vp^*$ such that if
 $\|\bov_0\|\leq \sigma$ and $\|\vp_0-\vp^*\|_{H^2}\leq \sigma$, then problem
 \eqref{1b}--\eqref{3b} admits a unique global strong solution.
 \ec
 Next, we improve the second part of
 Corollary \ref{3deab} by proving the well-posedness of strong solutions close to \emph{local} minimizers of the energy $E$:
 \bt \label{3dlom}
 Suppose $n=3$. Let $\vp^*\in H^2$ be a local minimizer of
 $E(\vp)$
 in the sense that $E(\vp^*) \leq E (\vp)$ for all $\vp\in H^2$ satisfying $\|\vp-\vp^*\|_{H^2}<\delta$.
 For any $\bov_0\in \dot V$, $\vp_0\in H^4$ satisfying $\|\bov_0\|_{\mathbf{H}^1}\leq 1$ and  $\|\vp_0-\vp^*\|_{H^4}\leq
 1$, there exist constants $\sigma_1,\sigma_2\in (0,1]$ which may depend
 on coefficients of the system and $\vp^*$ such that if
 $\|\bov_0\|\leq \sigma_1$ and  $\|\vp_0-\vp^*\|_{H^2}\leq \sigma_2$, then problem
 \eqref{1b}--\eqref{3b} admits a unique global strong solution.
 \et
 \begin{proof}
 Without loss of generality, we assume $\delta\leq 1$. By $C_i$,
 $i=1,2,...$ we denote constants that only depend on $\vp_*$
 and on coefficients of the system. If $\|\bov_0\|_{\mathbf{H}^1}\leq 1$ and $\|\vp_0-\vp^*\|_{H^4}\leq 1$, it is not difficult to
 see that $\|\nabla \bov_0\|^2+\|\mathcal{Q}(0)\|^2\leq R$, where
 $R$ depends only on $\vp^*$. Fix this $R$, using Proposition \ref{sm}, we can also
 fix the critical constant $\varepsilon_0$ determined by
 $R$.
 It follows from Lemma \ref{low} and Lemma \ref{comp} that $\|\bov(t)\|$ and
 $\|\vp(t)\|_{H^3}$ are uniformly bounded (by a constant depending on $\vp^*$).
Since $\mathcal{E}$ is decreasing, we can see that
 \bea
 0\leq
 \mathcal{E}(0)-\mathcal{E}(t)&=&\frac12\|\bov_0\|^2-\frac12\|\bov(t)\|^2+E(\vp_0)-E(\vp(t))
 \leq \frac12\|\bov_0\|^2+E(\vp_0)-E(\vp(t))\non\\
 &\leq& \frac12\|\bov_0\|^2+C_1\|\vp(t)-\vp_0\|_{H^2}.\label{Ediff}
 \eea
  First we require $\sigma_1\leq \min\left\{\frac12
 \varepsilon_0^\frac12,1\right\}$. Let $\beta$ denote the constant depending only on $\vp^*$ provided by Lemma \ref{ls}.
 If we are able to prove
 \be
 \|\vp(t)-\vp_0\|_{H^2}< \varpi:=\min\left\{\beta,
 \frac{\varepsilon_0}{2C_1}, \delta\right\}, \quad \forall t\geq
 0,\label{jj}
 \ee
  then we can infer from \eqref{Ediff} that
 \be
 \mathcal{E}(t)\geq \mathcal{E}(0)-\varepsilon_0,\quad \forall \ t\geq
 0,\label{cod}
 \ee
 and our conclusion immediately follows from Proposition \ref{sm}.
We prove \eqref{jj} by
 a contradiction argument. Assume $\sigma_2\leq\frac{\varpi}{4}$. Let $t_*$ denote the smallest and finite time for which
 $\|\vp(t_*)-\vp^*\|_{H^2}\geq \varpi$. Without loss of generality, we
 can assume that $\mathcal{E}(t)>E(\vp^*)$ $t\in [0,t_*)$. In fact,
 if there exists $t_{**}\in (0,t_*)$ such that
 $\mathcal{E}(t_{**})=E(\vp^*)$, since $\vp^*$ is the local
 minimizer and $\|\vp(t_{**})-\vp^*\|_{H^2}<\varpi\leq \delta$, we
 can see that $\bov(t_{**})=0$ and $\vp(t_{**})=\vp^{**}$, where
 $\vp^{**}$ is also a local minimizer (possibly different from
 $\vp^*$) satisfying \eqref{s2a}--\eqref{s3a}. Due to the uniqueness
 of strong solution, the evolution starting from $t_{**}$ will be
 stationary and hence contradicting the definition of $t_*$.
 Thus, let $\mathcal{E}(t)>E(\vp^*)$ for $t\in [0,t_*)$. We observe that the conditions in Lemma \ref{ls} are fulfilled with $\vp^*$, on the interval
 $[0,t_*)$. In analogy with \eqref{inta}, we obtain
 \be
 -\frac{d}{dt}(\mathcal{E}(t)-E(\vp^*))^\theta \geq   C_2(\|\nabla
\bov\|+ \|\mathcal{Q}\|), \quad \forall\  t\in [0,t_*),\label{intb}
 \ee
 where $C_2$ depends on $\theta, \mu_4, \lambda$. Using \eqref{dt}, we have
 \bea
 \int_0^{t_*}\|\vp_t(t)\|dt &\leq&
 C_3(\mathcal{E}(0)-E(\vp^*))^\theta
 \leq C_3\left(\frac12\right)^\theta\|\bov_0\|^{2\theta}+
 C_3|E(\vp_0)-E(\vp^*)|^\theta\non\\
 &\leq& C_4 \|\bov_0\|^{2\theta}+
 C_5\|\vp_0-\vp^*\|_{H^2}^\theta.\non
 \eea
 As a result,
 \bea
 \|\vp(t_*)-\vp^*\|_{H^2}&\leq&
 \|\vp(t^*)-\vp_0\|_{H^2}+\|\vp_0-\vp^*\|_{H^2}\non\\
 &\leq&
 C\|\vp(t^*)-\vp_0\|_{H^3}^\frac23\|\vp(t^*)-\vp_0\|^\frac13+\|\vp_0-\vp^*\|_{H^2}\non\\
 &\leq& C(\|\vp(t^*)\|_{H^3}+\|\vp_0\|_{H^3})^\frac23\left( \int_0^{t_*}\|\vp_t(t)\|dt
 \right)^\frac13+\|\vp_0-\vp^*\|_{H^2}\non\\
 &\leq& C_6 \left(\|\bov_0\|^{\frac{2\theta}{3}}+
 \|\vp_0-\vp^*\|_{H^2}^\frac{\theta}{3}\right)+\|\vp_0-\vp^*\|_{H^2},\label{difff}
 \eea
Taking
 \be
 \sigma_1\leq \min\left\{\frac12 \varepsilon_0^\frac12,1,
 \left(\frac{\varpi}{4C_6}\right)^\frac{3}{2\theta}\right\}, \quad
 \sigma_2\leq\min\left\{\frac{\varpi}{4},
 \left(\frac{\varpi}{4C_6}\right)^\frac{3}{\theta}\right\},\label{ssig}
 \ee
 we easily infer from \eqref{difff} that
 $\|\vp(t_*)-\vp^*\|_{H^2}\leq \frac34\varpi<\varpi$, which leads to
 a contradiction with the definition of $t^*$. Hence, we have shown that \eqref{jj}
 holds for all $t\geq 0$. The proof is complete.
 \end{proof}
 \br
 The above proof indicates that if the (regular) initial data are properly close
 to certain local minimizer, then the global strong solution will
 remain in the neighborhood of this local minimizer for all time. The conclusion
 is also true for the case with absolute minimizer in Corollary
 \ref{3deab}. If the minimizer is isolate, then we obtain the
 stability of it.
 \er


 \bl\label{l3d}
 Suppose $n=3$. Denote
 $ \mathbf{A}_1(t)=\|\nabla \mathbf{v}(t)\|^2+\|\mathcal{Q}(t)\|^2$ and
 $\tilde{\mathbf{A}}_1(t)=\mathbf{A}_1(t)+1$.
 For any $\mu_4>0$, we have
 \bea
 &&\frac{d}{dt}\mathbf{A}_1(t)+
 \left(\frac{\mu_4}{2}-M_1\mu_4^\frac12\tilde{\mathbf{A}}_1(t)\right)\|\Delta \mathbf{v}\|^2
  +\left(\lambda K-M_2\mu_4^{-\frac12}(1+\mu_4^{-\frac52})\tilde{\mathbf{A}}_1(t)\right)\|\Delta \mathcal{Q}\|^2\non\\
 &\leq& M_3(1+\mu_4^{-3})\mathbf{A}_1(t),\label{high3d}
 \eea
 where $M_1, M_2, M_3$ are constants depending on $\|\bov_0\|$,
 $\|\vp_0\|_{H^2}$, $\mu_1,\mu_5, \lambda, K, \epsilon$, but not on $\mu_4$.
 \el
 \begin{proof}
  We note that in the following calculation only the lower-order uniform  estimates \eqref{unilow} are used.
  The possible relaxation on the viscosity $\mu_4$ enable us to avoid
  using the $\mathbf{L}^\infty$-norm of $\nabla \vp$, which was crucial in the proof of Lemma \ref{A22d}.
  In what follows, the generic constant $C$ will only depend on
 $\|\mathbf{v}_0\|$, $\|\vp_0\|_{H^2}$, $\mu_1$, $\mu_5$, $\lambda$, $K$, $\epsilon$.

We revisit the terms on the right-hand side of \eqref{dnv}.
 \bea
 \int_{\T^3} (\bov\cdot\nabla)\bov\cdot \Delta \bov
dx&\leq& \|\Delta \mathbf{v}\|\|\nabla
\mathbf{v}\|_{\mathbf{L}^3}\|\mathbf{v}\|_{\mathbf{L}^6}\leq
C\|\Delta
 \mathbf{v}\|^\frac32\|\nabla \mathbf{v}\|^\frac32\non\\
 &\leq& \mu_4^\frac12\|\nabla
 \mathbf{v}\|^\frac43\|\Delta \mathbf{v}\|^2+C\mu_4^{-\frac12}\|\nabla \mathbf{v}\|^2.\non
 \eea
 \be
 I_1+I_{2a}+I_{2b}+I_{2c} \leq \frac{\mu_1}{4}\int_Q (d_id_j\nabla_lD_{ij})^2 dx+C \|\nabla\mathbf{v}
 \|_{\mathbf{L}^3}^2\|\nabla \d\|^2_{\mathbf{L}^6}\|\d\|^2_{\mathbf{L}^\infty},\non
 \ee
 where
 \bea
  && \|\nabla
 \mathbf{v}\|_{\mathbf{L}^3}^2\|\nabla \d\|^2_{\mathbf{L}^6}\|\d\|^2_{\mathbf{L}^\infty}
 \leq C \|\Delta
 \mathbf{v}\|^\frac32\|\mathbf{v}\|^\frac12\|\nabla \d\|^2_{\mathbf{L}^6}\|\d\|^2_{\mathbf{L}^\infty}\non\\
 &\leq& C \|\Delta
 \mathbf{v}\|^\frac32\|\mathbf{v}\|^\frac12( \|\Delta ^3 \varphi\|^\frac12\|\Delta \vp\|^\frac32+\|\Delta \vp\|^2)
 (\|\Delta ^2 \varphi\|\|\nabla \vp\|+\|\nabla \vp\|^2)\non\\
 &\leq&  C \|\Delta
 \mathbf{v}\|^\frac32\|\mathbf{v}\|^\frac12(
 \|\Delta\mathcal{Q}\|^\frac12+1)(\|\mathcal{Q}\|+1)\non\\
 &\leq&  \left[\frac{\mu_4}{24}
 +\mu_4^\frac12(1+\|\mathcal{Q}\|^2)\right]\|\Delta \mathbf{v}\|^2+C\mu_4^{-\frac12}
 (\|\mathcal{Q}\|^2+\|\nabla \mathbf{v}\|^2)\|\Delta \mathcal{Q}\|^2+C\mu_4^{-1}\|\nabla
 \mathbf{v}\|^2+C\mu_4^{-\frac12}\|\mathcal{Q}\|^2.\non
 \eea
  Next,
  \bea
  I_{2d}&\leq& C\|\d\|_{\mathbf{L}^\infty}^3\|\Delta \d\|\|\nabla
  \mathbf{v}\|_{\mathbf{L}^4}^2\leq C\|\nabla \vp\|_{\mathbf{H}^2}^\frac32\|\nabla \vp\|^\frac32_{\mathbf{H}^1}\|\nabla \Delta \vp\|\|\nabla
  \mathbf{v}\|_{\mathbf{L}^4}^2\non\\
  &\leq& C(\|\nabla \Delta \vp\|^\frac52+1)\|\Delta \mathbf{v}\|^\frac32\|\nabla \mathbf{v}\|^\frac12
 \leq C (\|\mathcal{Q}\|^\frac14+1)(\|\Delta\mathcal{Q}\|^\frac12+1)\|\Delta \mathbf{v}\|^\frac32\|\nabla
  \mathbf{v}\|^\frac12\non\\
  &\leq& \left[\frac{\mu_4}{24}+\mu_4^\frac12(\|\nabla \bov\|^\frac23+\|\mathcal{Q}\|^\frac13)\right]\|\Delta \mathbf{v}\|^2
  +C(\mu_4^{-\frac32}\|\mathcal{Q}\|+\mu_4^{-3}\|\nabla \bov\|^2)\|\Delta \mathcal{Q}\|^2\non\\
  &&+C(\mu_4^{-\frac32}+\mu_4^{-3})\|\nabla \mathbf{v}\|^2.\non
 \eea
 \bea
 I_3+I_4&\leq& \|\Delta \mathbf{v}\|\|\nabla \mathbf{v}\|_{\mathbf{L}^3}\|\d\|_{\mathbf{L}^\infty}\|\nabla
 \d\|_{\mathbf{L}^6}
 \leq \|\Delta \mathbf{v}\|^\frac32 \|\nabla \mathbf{v}\|^\frac12\|\nabla \vp\|_{\mathbf{H}^2}^\frac12\|\nabla \vp\|^\frac12_{\mathbf{H}^1}
 \|\nabla \vp\|_{\mathbf{H}^2}\non\\
 &\leq& C (\|\nabla \Delta \vp\|^\frac32+1)\|\Delta \mathbf{v}\|^\frac32\|\nabla
  \mathbf{v}\|^\frac12.\non
   \eea
 It is easy to see that $I_3+I_4$ can be bounded just like $I_{2d}$, because its order is lower.
 \bea
 I_5&=&\int_{\T^3} \Delta \mathbf{v}\cdot (\mathcal{Q}\d)dx\leq \|\Delta \mathbf{v}\|\|\mathcal{Q}\|\|\nabla
 \varphi\|_{\mathbf{L}^\infty}\leq  \|\Delta \mathbf{v}\|\|\mathcal{Q}\|(\|\mathcal{Q}\|^\frac14+1)\non\\
 &\leq& \left(\frac{\mu_4}{24}+\mu_4^\frac12\|\mathcal{Q}\|^\frac12\right)\|\Delta
 \mathbf{v}\|^2+C(\mu_4^{-\frac12}+\mu_4^{-1})\|\mathcal{Q}\|^2.\non
  \eea
 For the terms $J_1,...,J_5$ on the right-hand side of \eqref{dQt}, we have
 \bea
 J_1
 &\leq& K\|\nabla \varphi\|_{\mathbf{L}^\infty}\|\Delta \mathbf{v}\|\|\Delta
 \mathcal{Q}\|+ C\|\Delta \mathcal{Q}\|\|\nabla
 \mathbf{v}\|_{\mathbf{L}^3}\|\varphi\|_{W^{2,6}}+C\|\Delta \mathcal{Q}\|\|
 \mathbf{v}\|_{\mathbf{L}^6}\|\varphi\|_{W^{3,3}}\non\\
 &\leq& C(\|\mathcal{Q}\|^\frac12+1)\|\Delta \mathbf{v}\|\|\Delta
 \mathcal{Q}\|\leq  \frac{\mu_4}{24}\|\Delta
 v\|^2+C\mu_4^{-1}(1+\|\mathcal{Q}\|)\|\Delta
 \mathcal{Q}\|^2.\non
 \eea
 \bea
 J_2+J_4&\leq& C\|\nabla \mathcal{Q}\|(\|\nabla
 \varphi\|_{\mathbf{L}^\infty}^2+1)(\|\nabla \mathbf{v}\|\|\nabla
 \varphi\|_{\mathbf{L}^\infty}+\|\mathbf{v}\|_{\mathbf{L}^6}\|\nabla \nabla \varphi\|_{\mathbf{L}^3})\non\\
 &\leq& C\|\nabla \mathcal{Q}\|\|\nabla
 \mathbf{v}\|(\|\mathcal{Q}\|^\frac34+1)\non\\
 &\leq& C(\|\Delta
 \mathcal{Q}\|^\frac12\|\mathcal{Q}\|^\frac12+\|\mathcal{Q}\|)\|\Delta
 \mathbf{v}\|^\frac12\|\mathbf{v}\|^\frac12\|\mathcal{Q}\|^\frac34 +C(\|\Delta
 \mathcal{Q}\|^\frac12\|\mathcal{Q}\|^\frac12+\|\mathcal{Q}\|)\|\nabla \mathbf{v}\|\non\\
 &\leq& \left(\frac{\lambda K}{4}+ C\mu_4^{-\frac12}\right)\|\Delta
 \mathcal{Q}\|^2+ \mu_4^\frac12\|\mathcal{Q}\|\|\Delta
 \bov\|^2+ C(1+\mu_4^{-\frac16})\|\mathcal{Q}\|^2+ C\|\nabla \mathbf{v}\|^2.\non
 \eea
 \be
 J_3+J_5\leq C\|\nabla \mathcal{Q}\|_{\mathbf{L}^3}^2(\|\nabla
 \varphi\|_{\mathbf{L}^6}^2+1)\leq
 C(\|\Delta
 \mathcal{Q}\|^\frac32\|\mathcal{Q}\|^\frac12+\|\mathcal{Q}\|^2)
 \leq \frac{\lambda K}{4}\|\Delta
 \mathcal{Q}\|^2 +C\|\mathcal{Q}\|^2.\non
 \ee
 Collecting all the estimates and using the Young inequality, we
 can obtain that
 \bea
 &&\frac{d}{dt}(\|\nabla \mathbf{v}\|^2+\|\mathcal{Q}\|^2)+ \mu_1\int_Q (d_id_j\nabla_lD_{ij})^2 dx
 +4\mu_5\int_Q (d_k\nabla_l D_{ki})^2 dx\non\\
 && +
 \left[\frac{\mu_4}{2}-C\mu_4^\frac12(1+\|\nabla
 \mathbf{v}\|^2+\|\mathcal{Q}\|^2)\right]\|\Delta \mathbf{v}\|^2\non\\
&& +\left[\lambda K-C\mu_4^{-\frac12}(1+\mu_4^{-\frac52})(1+\|\nabla
 \mathbf{v}\|^2+\|\mathcal{Q}\|^2)\right]\|\Delta \mathcal{Q}\|^2\non\\
 &\leq& C(1+\mu_4^{-3})(\|\nabla
 \mathbf{v}\|^2+\|\mathcal{Q}\|^2),\non
 \eea which yields \eqref{high3d}.
 \end{proof}

Based on Lemma \ref{l3d}, one can prove the existence and uniqueness
of global strong solutions $(\mathbf{v}, \vp)$ to our system
provided that the viscosity $\mu_4$ is properly large.

 \bt
 \label{glolarge}
Suppose $n=3$. For any $(\mathbf{v}_0, \phi_0)\in V\times H^4$, if
$\mu_4\geq
 \underline{\mu}_4(\mathbf{v}_0,\vp_0)$ (cf. \eqref{mu4b}), problem \eqref{1b}--\eqref{3b}
  admits a unique global strong solution.
 \et
 \begin{proof}
 The crucial step is to obtain a uniform
 bound of $\mathbf{A}_1(t)$. Without loss of generality, we assume that $\mu_4\geq 1$. Then we deduce from \eqref{high3d}
 that
  \be
 \frac{d}{dt}\tilde{\mathbf{A}}_1(t)+
 \left(\frac{\mu_4}{2}-M_1\mu_4^\frac12\tilde{\mathbf{A}}_1(t)\right)\|\Delta \mathbf{v}\|^2
  +\left(\lambda K-2M_2\mu_4^{-\frac12}\tilde{\mathbf{A}}_1(t)\right)
  \|\Delta \mathcal{Q}\|^2 \leq 2M_3\tilde{\mathbf{A}}_1(t).\label{high3da}
 \ee
   \eqref{int} yields that
 $ \int_t^{t+1}\tilde{\mathbf{A}}_1(\tau) d\tau\leq \int_0^{+\infty}\mathbf{A}_1(\tau) d\tau+1
 \leq
 \max\left\{2,\frac{1}{\lambda}\right\}\mathcal{E}(0)+1=:\tilde{M}$.
 If the viscosity $\mu_4$ satisfies the following relation
 \be
 \mu_4\geq \underline{\mu}_4:=\max\{1,\kappa^2\},\quad \text{with} \
 \kappa:=\max\left\{2M_1,\frac{2M_2}{\lambda K}\right\}(\tilde{\mathbf{A}}_1(0)+2M_3\tilde{M}+2\tilde{M}).\label{mu4b}
 \ee
 then applying the classical method in \cite{LL95},  we can argue as in \cite{W10} to obtain that
 \be
 \frac{\mu_4}{2}-M_1\mu_4^\frac12\tilde{\mathbf{A}}_1(t)\geq 0,
 \quad  \lambda K-2M_2\mu_4^{-\frac12}\tilde{\mathbf{A}}_1(t)\geq
 0,\quad \forall \ t\geq 0.\non
 \ee
The proof is complete.
 \end{proof}

 Finally, we study the long-time behavior of global solutions.
 \bl
 \label{AA1} Let $n=3$, the weak (or strong) solution $(\mathbf{v},\vp)$ to
 problem \eqref{1b}--\eqref{3b} has the following property:
 \be
 \lim_{t\rightarrow+\infty} (\|\nabla \bov(t)\|+\|\mathcal{Q}(t)\|)=0. \label{AAc}
 \ee
 \el
 \begin{proof}
  Since we are only concerning the behavior of $(\mathbf{v},
  \vp)$ for large time, due to the eventual regularity of weak solutions,
  we can reduce to the case of strong solutions by a finite shift of time.
  Then we can see that $\|\nabla \bov(t)\|$ and
  $\|\mathcal{Q}(t)\|$ are uniformly bounded for $t\geq 0$.
  It follows from \eqref{A3da} that
  $\frac{d}{dt}\mathbf{A}(t)\leq C$ (similarly, from \eqref{high3d}, we have $\frac{d}{dt}\mathbf{A}_1(t)\leq C$).
  Recalling that $\mathbf{A}(t),  \mathbf{A}_1(t)\in L^1(0,+\infty)$ (cf. \eqref{int}), we arrive at the conclusion.
 \end{proof}

Based on Lemma \ref{AA1}, we are able to prove the convergence to
equilibrium result in $3D$. One can check the argument for $2D$ case
in the previous section step by step. By applying corresponding
Sobolev embedding theorems in $3D$, we can see that all calculations
in Section 4.2 are valid. Hence, the details are omitted here.

 \br Since the set of equilibria can form a continuum, the global solution obtained in Corollary
\ref{3deab} or in Theorem \ref{3dlom} will converge to an
equilibrium $\vp_\infty$ which is not necessarily the original
minimizer $\vp^*$. However, we can show that
$E(\vp_\infty)=E(\vp^*)$. To see this, we recall the definition of
$\varpi$ in the proof of Theorem \ref{3dlom}. Actually we showed
that the solution $\vp(t)$ will stay in the $H^2$-neighborhood of
$\vp^*$ with radius less than $\beta$, so does $\vp_\infty$. Then,
we can apply Lemma \ref{ls} with $\psi=\vp^*$ and $\vp=\vp_\infty$
obtaining that $| E(\vp_\infty)-E(\vp^*)|^{1-\theta}\leq
\|-K\Delta^2\vp_\infty+\nabla \cdot f(\nabla \vp_\infty)\|_A=0$.
 \er


\medskip \noindent\textbf{Acknowledgments.}  A.S. was partially
supported by PRIN 2008 {\sl ``Problemi di transizione di fase e dinamiche relative''}.
  H.W. was partially supported by NSF of China 11001058 and
NSF of Shanghai 10ZR1403800. This research started when A.S. was
visiting the School of Mathematical Sciences of the Fudan University
whose hospitality is gratefully acknowledged.

\bibliographystyle{amsplain}

\end{document}